\newtheoremstyle{mystyle}
  {}
  {}
  {\itshape}
  {}
  {\bfseries}
  {.}
  { }
  {}
\newtheoremstyle{myremstyle}
  {}
  {}
  {}
  {}
  {\bfseries}
  {.}
  { }
  {}
\theoremstyle{mystyle}
\newtheorem{theorem}{Theorem}[section]
\newtheorem{definition}[theorem]{Definition}
\theoremstyle{definition}
\theoremstyle{remark}
\theoremstyle{myremstyle}
\newtheorem{remark}[theorem]{Remark}
\newcommand\PG[1]{{#1}}
\newcommand\PGn[1]{{#1}}
\newcommand\KE[1]{{#1}}
\title[Optimal transport over nonlinear systems]{Optimal transport over nonlinear systems via infinitesimal generators on graphs}
\subjclass{Primary: 93C10, 47D03, 37M99; Secondary: 93C20}
\keywords{Perron-Frobenius Operator, Monge-Kantorovich Problem, Optimal Transport, Swarm control}
\author[Karthik Elamvazhuthi and Piyush Grover]{}
\thanks{This work was funded by Mitsubishi Electric Research Labs.}
\thanks{$^*$ Corresponding author}
\begin{document}
\maketitle

\centerline{\scshape Karthik Elamvazhuthi}
\medskip
{\footnotesize
 \centerline{Arizona State University}
   \centerline{ Tempe, AZ, USA}
} 

\medskip

\centerline{\scshape Piyush Grover $^*$}
\medskip
{\footnotesize
 \centerline{ Mitsubishi Electric Research Labs}
   \centerline{Cambridge, MA, USA}
}

\bigskip

 \centerline{(Communicated by the associate editor name)}

\begin{abstract}
We present a set-oriented graph-based computational framework for continuous-time optimal transport over nonlinear dynamical systems.  We recover provably optimal control laws for steering a given initial distribution in phase space to a final distribution in prescribed finite time for the case of non-autonomous nonlinear control-affine systems, while minimizing a quadratic control cost. The resulting control law can be used to obtain approximate feedback laws for individual agents in a swarm control application. Using infinitesimal generators, the optimal control problem is reduced to a modified Monge-Kantorovich optimal transport problem, resulting in a convex Benamou-Brenier type fluid dynamics formulation on a graph. The well-posedness of this problem is shown to be a consequence of the graph being strongly-connected, which in turn is shown to result from controllability of the underlying dynamical system. Using our computational framework, \PGn{we study optimal transport of distributions where the underlying dynamical systems are chaotic, and non-holonomic. The solutions to the optimal transport problem elucidate the role played by invariant manifolds, lobe-dynamics and almost-invariant sets in efficient transport of distributions in finite time.} Our work connects set-oriented operator-theoretic methods in dynamical systems with optimal mass transportation theory, and opens up new directions in design of efficient feedback control strategies for nonlinear multi-agent and swarm systems operating in nonlinear ambient flow fields. 
\end{abstract}

\section{Introduction} \label{section:Introduction}
Understanding, computing and controlling phase space transport is of utmost importance in the study of nonlinear dynamical systems. For computation of phase space transport in dynamical systems, the available techniques can be divided into roughly three classes: geometric, topological and statistical (or operator theoretic) methods. 

The geometric methods, originating in Poincar\'{e}'s \cite{poincare1893methodes} work in celestial mechanics, aim at extracting structures in phase space that organize transport. In recent years, the focus in this field has been on extracting the Lagrangian coherent structures in autonomous and non-autonomous systems, which are often the stable and unstable manifolds \cite{wiggins2013chaotic} of fixed points or periodic orbits, or their time-dependent analogues \cite{haller2015lagrangian}.  Related techniques based on lobe-dynamics \cite{wiggins2013chaotic} allow for quantifying transport between different weakly mixing regions in the phase space. Application to low-dimensional systems arising in fluid kinematics \cite{ottino2004introduction, wiggins2004foundations}, celestial mechanics \cite{RossBook}, and plasma physics \cite{Meiss1992} have been developed over the years. Once these Lagrangian structures have been identified, intelligent control strategies can be formulated to obtain efficient phase space transport between the desired regions in the phase space; see Refs. \cite{RossBook,senatore2008fuel,sabuco2012dynamics,vaidya2004controllability,vainchtein2006passage} for some recent work in this area. The topological methods \cite{gilmore1998topological} provide rigorous bounds and sharp estimates of certain transport related quantities. For example, such methods have been applied in the the study of passive scalar mixing in laminar fluid flows \cite{BoArSt2000,thiffeault2006topology}. Some topological optimal control problems have also been studied \cite{finn2011topological}. 

The operator-theoretic statistical techniques \cite{LasotaMackey} are based on lifting the evolution from the state space to the space of measures, in case of the Perron-Frobenius (or transfer) operator, and to the space of observables, in case of the Koopman operator. In both cases, the lifted dynamical system is linear, albeit in infinite dimensions. The linearity allows for immediate application of techniques from linear algebra and linear systems theory. Numerical methods based on operator theory have been developed, and applied to several problems of contemporary interest \cite{Dellnitz98onthe, DeFrJu2001,budivsic2012applied,bollt2013applied,mezic2013analysis}. Using efficient phase space discretization techniques, these methods enable discovery of `coherent sets' in autonomous \cite{FrDe2002} and non-autonomous \cite{FrSaMo2010} dynamical systems. Recent work has also shown that combining the statistical methods with geometric \cite{DeJuKoLeLoMaPaPrRoTh2005,FrPa09,tallapragada2013set}, or topological methods \cite{grover2012topological,stremler2011topological} can give further qualitative and quantitative information about phase space transport. 

The set-oriented numerical methods for computing transfer operators \cite{DeFrJu2001, bollt2013applied} are especially well-suited for developing rigorous methods for control, e.g. see Refs. \cite{junge2004set,ross2009optimal,jerg2013global} for applications to control of individual trajectories. In Refs. \cite{vaidya2008lyapunov,vaidya2010nonlinear, raghunathan2014optimal}, an optimal control framework for asymptotic feedback stabilization of arbitrary initial measure to an attractor is presented. Also relevant is the work in the area of occupation measures, see Ref. \cite{lasserre2008nonlinear}. In fluid mechanics, such techniques have been exploited for optimal control of mixing passive scalars in fluids, see Refs. \cite{froyland2015optimal,froyland2016optimal}. 

\PGn{For control problems, the transfer operator based framework has a dual interpretation. The density control problem of a passive scalar (as in fluid mechanics), and the control of distribution of a continuum of dynamic agents (as in swarm robotics etc.) can both be studied in this setting. The difference lies in the definition of admissible controlled vector fields. In the former case, the controlled vector field is chosen to be among a set of physically meaningful vector fields (e.g. satisfying the incompressibility condition), and/or is governed via a dynamic equation (e.g. Navier-Stokes). 

In the latter case that is the focus of our work, the individual agents are being governed by a nonlinear control system. The distribution of agents' states is described by a time-varying measure over the phase space of a single agent. The controlled velocity field governing the evolution of this measure is restricted to the set of all vector fields that result from controlled motion of individual agents. In this paper, we develop a set-oriented framework for the problem of continuous-time `optimal transport' \cite{villani2003topics} of measure under such controlled nonlinear dynamics. This problem involves optimally steering an initial measure on a phase space $X$, to a final measure in given finite time. Specifically, we consider nonlinear control-affine systems of the form,
\begin{align}
\dot{x}(t) = g_0(x(t),t) +\sum_{i=1}^n u_i(t)g_i(x(t)), \label{eq:sys1}
\end{align}
where $X$ is $d-$dimensional, and $n$ is the number of control inputs.
Our aim is to compute controls $u_i$ such that a cost of transporting a measure $\mu_{t_0}$ to $\mu_{t_f}$ over the time-horizon $[t_0,t_f]$ is minimized. This cost is given by the integral over phase-space and time,
\begin{align}
C=\int_X\int_{t_0}^{t_f}\sum_{i=1}^n |u_i(x,t)|^2dt\:d\mu(x).
\end{align}

A major motivation for studying this problem comes from the field of multi-agent systems or swarm control. The problem of path planning and control of a swarm of homogenous agents in an ambient nonlinear flow field can be formulated as optimal transport problem in the presence of nonlinear dynamics. For instance, the control of magnetic particles in blood stream \cite{ghosh2009controlled,cheang2014multiple,peyer2013bio}, robotic bees in air \cite{elamvazhuthi2015optimal,wood2013flight}, and swarms of autonomous underwater vehicles (AUVs) in the ocean \cite{lermusiaux2015science} can all be studied in this setting.  This problem also arises naturally in the realm of nonlinear control systems of a single `agent', where its initial and final states can only be specified as probability distributions. In this case, the measures involved are probability measures, and hence, the optimal transport cost $C$ is the \emph{expectation} of control cost over all possible initial and final states. 

The field of optimal mass transportation \cite{villani2003topics} is concerned with optimal mapping of measures in different settings, including on graphs \cite{maas2011gradient}, and has deep connections with phase space transport in dynamical systems \cite{figalli2010mass,bernard2004optimal,ambrosio2008gradient}. The problem of optimal transport in linear dynamical systems has been studied recently \cite{hindawi2011mass, chen2017optimal,chen2016relation}, resulting in several theoretical and computational advances. 

In our previous work \cite{grover2018optimal}, we studied the problem of obtaining optimal perturbations in discrete time for systems with nonlinear dynamics, that result in desired measure transport. The perturbations were modeled as instantaneous, and computed by solving a Monge-Kantorovich optimal transport problem on a graph in \emph{pseudo-time}. Furthermore, full controllability was assumed in computation of the pseudo-time optimal transport. Hence, the evolution of the measure resulted from switching between the uncontrolled dynamics in (real) time, and the continuous pseudo-time optimal transport. 

\paragraph{\textbf{Contributions:}}In this work, we develop a set-oriented graph-based computational framework for continuous-time optimal transport over nonlinear dynamical systems of the form given in Eq. (\ref{eq:sys1}). The solution of this problem provides an open-loop control for the measure, and an approximate feedback control for individual agents, to move from an initial to final measure in finite time. Using infinitesimal generators, the optimal control problem is reduced to a modified Monge-Kantorovich optimal transport problem, resulting in a convex Benamou-Brenier type fluid dynamics formulation on a graph. We show that the well-posedness of the  resulting optimal transport problem on this graph is related to controllability of the underlying dynamical system. We prove that if the underlying dynamical system is controllable, the graph obtained in our formulation is strongly-connected. It is then proved that arbitrary final measures not lying on the boundary of the probability simplex can be reached in finite-time.

This work extends our previous work \cite{grover2018optimal} in several directions. First, we work in continuous-time, and as a result, the (passive) dynamics and the control act on the measure concurrently (rather than in a switching fashion). This also removes the requirement in Ref. \cite{grover2018optimal} that the control acts on faster time scales than the passive dynamics. Second, rather than assuming full-controllability of measures, we rigorously relate the controllability in the space of measures on a graph to the controllability properties of the underlying (single-agent) dynamical system in continuous phase space. Third, we obtain an algorithm to obtain approximate feedback laws for individual agents from the open-loop solutions of the optimal transport problem.

Using this framework, we compute optimal transport of distributions where the underlying dynamical systems are chaotic (periodic double-gyre), and non-holonomic (unicycle). The application to periodically driven double-gyre rigorously elucidates the role of invariant manifolds, lobe-dynamics and almost-invariants sets in efficient finite-time transport of distributions in the phase space. }

%
%
%

\section{Background and Mathematical Preliminaries}\label{sec:OT}
We briefly review concepts from control systems theory, optimal transport and set-oriented numerical methods relevant to the discussion in Section \ref{sec:Algo}. Specifically, we motivate the developments of Section \ref{sec:Algo} by relating the continuous and discrete (graph-based) concepts of optimal transport in controlled dynamical systems.

\subsection{Optimal Transport in Controlled Dynamical Systems}
The Monge-Kantorovich optimal transport (OT) problem \cite{villani2003topics} is concerned with mapping of an initial measure $\mu_0$ on a space $X$ to a final measure $\mu_1$ on a space $Y$. In the original formulation, it involves solving for a measurable transport map $T:X\rightarrow Y$, which pushes forward $\mu_0$ to $\mu_1$ in an optimal manner. The cost of transport per unit mass is prescribed by a function $c(x,T(x))$. Hence, the optimization problem is 
\begin{align}
\inf_T \int c(x,T(x))d\mu_0(x),\label{eq:OT1}\\
\text{ s.t. } T_{\#}\mu_0=\mu_1, \nonumber 
\end{align}
where $T_{\#}$ is the pushforward of $T$, i.e. $(T_{\#}\mu)(A)=\mu(T^{-1}(A))$ for every $A$.
 In a `relaxed' version of this problem, due to Kantorovich, the optimization problem is to obtain an optimal joint distribution $\pi(X\times Y)$ on the product space $X \times Y$, where the marginal of $\pi$ on $X$ is $\mu_0$ and on $Y$ is $\mu_1$. We denote by $\prod(\mu_0,\mu_1)$ the set of all measures on product space with the marginals $\mu_0$ and $\mu_1$ on $X$ and $Y$ respectively. Hence, the relaxed problem is 
\begin{align}
\inf_{\pi(X\times Y)\in\prod(\mu_0,\mu_1)} \int c(x,y)d\pi(x,y). \label{eq:OT2}
\end{align}

For the case of quadratic costs, i.e., $c(x,y)=\|x-y\|^2$, the support of the optimal distribution $\pi(X\times Y)$ is the graph of the optimal map $T$ obtained from the solution of problem \ref{eq:OT1}. The square-root of the optimal cost obtained as solution of this problem is called the $2-$Wasserstein distance, and we denote it by $W_2(\mu_0,\mu_1)$. We concern ourselves with only quadratic cost in this paper.

An alternative fluid dynamical interpretation of OT problem was provided by Brenier-Benamou \cite{benamou2000computational}. 
In this approach, the optimization problem is formulated in terms of an advection field $u(x,t)$, and initial and final \emph{densities} $(\rho_0(x),\rho_1(x))$ of a passive scalar. The core idea is to obtain the optimal map $T$ as a result of advection over a `time' period $(t_0,t_f)$ by an optimal advection field $u(x,t)$. It can be shown that the optimization problem given by Eq. (\ref{eq:OT1}) (with $X=Y=\mathbb{R}^d$) with quadratic cost is equivalent to the following problem:
\begin{align}
W_2^2(\mu_0,\mu_1)=\inf_{u(x,t),\rho(x,t)} \int_{\mathbb{R}^d}\int_{t_0}^{t_f} \rho(x,t)|u(x,t)|^2dt dx,\label{eq:OT_BB}\\
\text{ s.t. } \frac{\partial\rho(x,t)}{\partial t}+\nabla \cdot(\rho(x,t)u(x,t))=0,\label{eq:OT_adv} \\
\rho(x,t_0)=\rho_0(x), \rho(x,t_f)=\rho_1(x).\nonumber
\end{align}

The motion of a passive scalar is governed by the ordinary differential equation of the single integrator,
\begin{align}
\dot{x}(t)=u(x,t)\label{eq:passive}.
\end{align}
%

By a change of variables from $(\rho,u)$ to $(\rho,m\overset{\Delta}{=}\rho u)$, the optimization problem in Eq. (\ref{eq:OT_BB}) can be put into a form where its convexity can be proved easily. The transformed convex optimization problem is 

\begin{align}
\inf_{\rho(x,t) \KE{\geq 0},m(x,t)} \int_{\mathbb{R}^d}\int_{t_0}^{t_f} \frac{|m(x,t)|^2}{\rho(x,t)}dt dx,\label{eq:OT3}\\
\text{ s.t.  } \frac{\partial\rho(x,t)}{\partial t}+\nabla \cdot (m(x,t))=0,\PG{\:\: t_0\leq t\leq t_f,}\nonumber\\
\rho(x,t_0)=\rho_0(x), \rho(x,t_f)=\rho_1(x).\nonumber
\end{align}
%

\PGn{The basic theory of generalization to general nonlinear controlled dynamical systems $\dot{x}(t)=f(x(t),u(t))$, has been developed in Refs. \cite{agrachev2009optimal, rifford2014sub}. This problem can be interpreted as finding optimal control which steers an initial scalar density to a final density, where the scalar transport occurs according to a controlled dynamical system $f(x(t),u(t))$}.

For the special case of linear dynamical systems with quadratic cost, mirroring the optimal control case, further analytical development and computational simplification has been made \cite{hindawi2011mass, chen2017optimal}. As described in Ref. \cite{chen2017optimal}, consider the following setup:
\begin{align}
c(x_1,x_2)= \inf_{\mathbb{U}_{x_1}^{x_2}}\int_{t_0}^{t_f} \frac{1}{2}\|u\|^2dt,\label{eq:OT_LQ1}\\
\dot{x}(t)=A(t)x(t)+B(t)u(t), \PG{\:\: t_0\leq t\leq t_f,}\label{eq:OT_LQ2}\\
x(t_0)=x_1,x(t_f)=x_2\label{eq:OT_LQ3}.
\end{align}

The generalization of Benamou-Brenier approach to the corresponding optimal transport problem can be seen to be the following:
\begin{align}
\inf_{u(x,t),\rho(x,t)} \int_{\mathbb{R}^d}\int_{t_0}^{t_f} \rho(x,t)|u(x,t)|^2dt dx\label{eq:OT_BB1},\\
\text{ s.t. } \frac{\partial\rho(x,t)}{\partial t}+\nabla \cdot ((A(t)x(t)+B(t)u(x,t))\rho(x,t))=0,\PG{\:\: t_0\leq t\leq t_f,}\nonumber \\
\rho(x,t_0)=\rho_0(x), \rho(x,t_f)=\rho_1(x)\nonumber.
\end{align}

%

We note that the optimal transport problem given by Eq. (\ref{eq:OT_BB1}) can also be interpreted as the problem of optimally steering a dynamical system from a probabilistic initial state to a probabilistic final state. Note that the dynamics of the system are still taken to be deterministic; however see Ref. \cite{chen2016relation} for connections with stochastic dynamical systems. For the purpose of studying controlled measure transport in nonlinear systems, we use tools from operator theory, which are discussed next.

\subsection{Transfer Operator and Infinitesimal Generator} \label{sec:TOaIG}
Consider the flow-map $\phi_{t_0}^{t_0+T}:X\rightarrow X$ on a $d$-dimensional phase space $X$. This map may be obtained as a time-$T$ map of the flow of a possibly time-dependent dynamical system,
\begin{align}
\dot{x}=f(x,t).\label{eq:ode1}
\end{align}

The corresponding Perron-Frobenius transfer operator \cite{LasotaMackey} $P_{t_0}^{t_0+T}$  is a linear operator which pushes forward measures in phase space according to the dynamics of the trajectories under $\phi_{t_0}^{t_0+T}$. Let $\mathbf{B}(X)$ denote $\sigma-$algebra of Borel sets in $X$. Then, for any measure $\mu$,
\begin{align}
P_{t_0}^{t_0+T}\mu(A)=\mu((\phi_{t_0}^{t_0+T})^{-1}(A)) \:\:\:\: \forall A\in \mathbf{B}(X).
\end{align}
The transfer operator lifts the evolution of the dynamical systems from phase space $X$ to the space of measures $\mathbf{M}(X)$. Numerical approximation of $P$, denoted by $\hat{P}$, may be viewed as a transition matrix of an $N$-state Markov chain \cite{bollt2013applied}. For computation, we partition the phase space volume of interest into $N$ $d-$dimensional connected, positive volume subsets, $B_1,B_2,\dots,B_N$ with piecewise smooth boundaries $\partial B_i$. Usually, these subsets are hyperrectangles. The matrix $\hat{P}=\{\hat{p}_{ij}\}$ is numerically computed via the Ulam-Galerkin method \cite{ulam2004problems,bollt2013applied}, as 
follows
\begin{align}\label{matrix_entries}
\hat{p}_{ij}=\frac{\bar{m}\left((\phi_{t_0}^{t_0+T})^{-1}(B_{i})\cap B_{j}\right)}{\bar{m}(B_{j})},
\end{align}
where $\bar{m}$ is the Lebesgue measure. 
The action of the transfer operator over a finite time $T$ can also be defined naturally on densities in the case of Lebesgue absolutely continuous measures. However, we are more interested in capturing the continuous-time behavior of the dynamical system in Eq. (\ref{eq:ode1}) in the space of densities. The continuity equation for system in Eq. (\ref{eq:ode1}), is given by

\begin{align}\label{eq:cont}
\frac{d\mu}{dt}=-\nabla\cdot(f(x,t)\mu).
\end{align}

For the numerical approach used in this paper we briefly consider the Eq. (\ref{eq:cont}), in a operator theoretic framework,  as an abstract ordinary differential equation in the space of measures, formally. Eq. (\ref{eq:cont}) can be expressed as
\begin{equation}\label{eq}
\dot{\mu}(t) = \mathcal{A}(t)\mu  \hspace{2mm} ; \hspace{2mm} \mu(s) = \mu_s \in \mathbf{M}(X),
\end{equation}
where $\mathcal{A}(t) : D(\mathcal{A}(t) \rightarrow \mathbf{M}(X))$, $D(\mathcal{A}(t)) \subset \mathbf{M}(X)$ and the solution, $\mu(t),$ of Eq. (\ref{eq}) can be expressed using a two-parameter semigroup of operators $(\mathcal{U}(t,s)_{s,t \in \mathbb{R}, t \geq s}$ as $\mu(t) = \mathcal{U}(t,s)\mu_s$. 
The divergence operation is to be understood in the sense of duality of $M(X)$ with $C(X)$ (assuming $X$ is compact). Here $C(X)$ refers to the space of continuous functions on $X$. The Perron-Frobenius operator is related to this two-parameter semigroup of operators as $\mathcal{U}(T,t_0) = P^{t_0+T}_{t_0}$ for given parameters $t_0$ and $T$. In general, guaranteeing the existence of a strongly continuous two-parameter semigroup based on the time-dependent generator $\mathcal{A}(t)$ is quite involved. See for example Refs. \cite{engel1999one,fattorini1984cauchy}. In contrast,  the theory is more well-developed for the case when $\mathcal{A}(t) \equiv \mathcal{A}$, (the vector field $f(\mathbf{x})$ is time independent). In this case, the solution, $\mu(t)$, can be expressed by a one-parameter semigroup of bounded operators, $(\mathcal{T}(t))_{t \geq 0}$, as $\mu(t)=\mathcal{T}(t-s)\mu_s$.  Here, the generator $\mathcal{A}$ and $\mathcal{T}(t)$ are related by the formula

\begin{equation}
A \mu = \lim_{h \rightarrow 0^+} \frac{ \mathcal{T}(h)\mu- \mu}{h} \hspace{2mm} \text{for} \hspace{2mm} \text{each} \hspace{2mm} \mu \in D(\mathcal{A}).
\end{equation}

As in the case of the Perron-Frobenius operator, one can also consider the semigroup and its generator on a space of densities (or equivalently on a space of measures absolutely continuous with respect to a reference measure with additional regularity restrictions).

\begin{figure}[h!]
\centering
\includegraphics[width=2.5in]{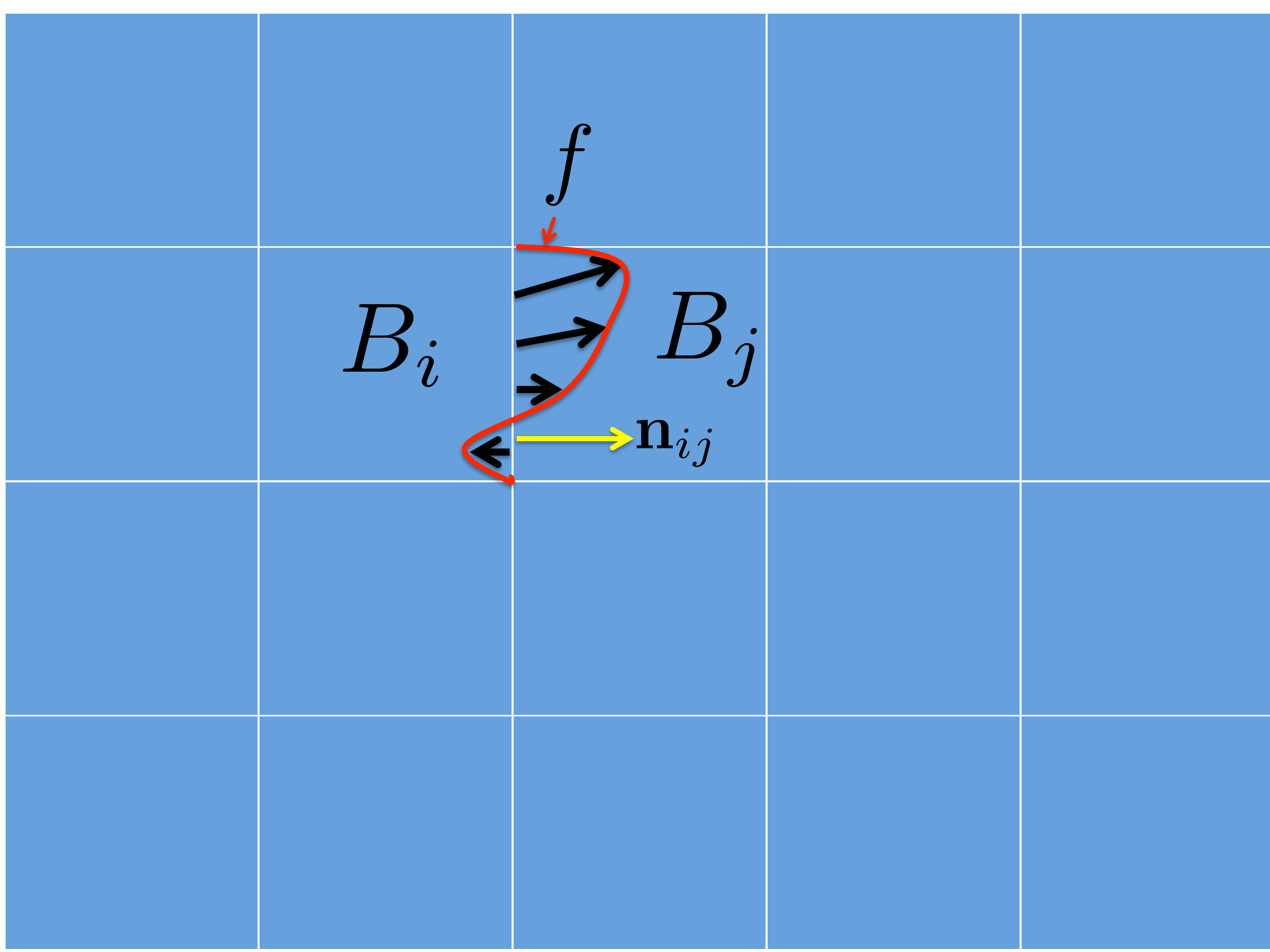}
\caption{\footnotesize{
Computation of infinitesimal generator $F$. The entry $F_{ij}$ is proportional to flux across $B_i\cap B_j$ from $B_i$ to $B_j$, due to vector field $f$.  
}}
\label{fig:Ulam}
\end{figure}
Ulam's method for approximating Perron-Frobenius operators using Markov matrices extends to numerical approximations of semigroups corresponding to the continuity equation. Analogously, one approximates the generator of the semigroup using transition rate matrices, which generate approximating semigroups on a finite state space. We recall this method as shown in \cite{froyland2013estimating}. \PG{We denote by $\bar{B_i}$ the closure of $B_i$}. The operator $\mathcal{A}(t)$ is approximated by defining elements of time-varying transition rate matrix  $\{A_{ij}(t)\}$, which are computed as follows,
\begin{align}\label{eq:gen1}
A_{ij}(t)=\begin{cases} \dfrac{1}{\bar{m}(B_j)}\int_{\bar{B}_i\cap \bar{B}_j} max\{f(x,t)\cdot \mathbf{n}_{ij},0\} & i \neq j, \\[7pt]
-\sum_{k\neq i} \dfrac{\bar{m}(\bar{B}_k)}{\bar{m}(\bar{B}_i)} A_{ik}(t) \:\:\:\mbox{otherwise, } \end{cases}
\end{align}
where $\mathbf{n}_{ij}$ is the unit normal vector pointing out of $B_i$ into $B_j$ if $\bar{B}_i\cap \bar{B}_j$ is a $(d-1)$ dimensional face, and zero vector otherwise. See Fig. \ref{fig:Ulam}. Note that in \cite{froyland2013estimating}, the authors also considered the perturbed version of the operator, $-\nabla \cdot (f(x,t)\cdot)$ :  $-\nabla \cdot (f(x,t) \cdot) + \frac{\epsilon^2}{2} \Delta $. This was mainly to exploit the spectral properties of the perturbed operator and the corresponding semigroup. However, in this work the perturbed operator does not offer any visible advantages. Hence, we work with approximations of the operator, $-\nabla \cdot (f(x,t) \cdot )$, alone. Nevertheless, we note that the discretization will introduce some numerical diffusion.

\subsection{Monge-Kantorovich Transport on Graphs}\label{subsec:MKG}
Now consider a directed graph \KE{$\mathcal{G}=(\mathcal{V},\mathcal{E})$} on $X$, where the set of vertices \KE{$\mathcal{V}$} represent the subsets $B_i$ as before, and the set of directed edges $\mathcal{E}$ are obtained from the topology of $X$. For each pair of neighboring vertices, two edges are constructed, one in each direction. 
%

A continuous-time advection on such a graph can be described \cite{berman2009optimized,chapman2011advection} as,
\begin{align}
\frac{d}{dt}\mu(t,v)=\sum_{e=(w\rightarrow v)}U(t,e)\mu(t,w)-\sum_{e=(v\rightarrow w)}U(t,e)\mu(t,v)\label{eq:adv_graph}, 
\end{align}

where $\mu(t,v)$ is the time-varying measure on a vertex $v$, and $U(t,e)$ is the flow on an edge $e$. Here we use the notation $e=(v\rightarrow w)$ to represent the edge $e$ directed from a vertex $v$ to $w$.
The notion of optimal transport has been extended to such a continuous-time discrete-space setting recently \cite{maas2011gradient,gigli2013gromov,mielke2013geodesic,solomon2016continuous}. Following \cite{solomon2016continuous}, one can formulate a quadratic cost optimal transport problem on \KE{$\mathcal{G}$} as follows. First, define an advective inner product between two flows $U_1,U_2$ as 
\begin{align}
\langle U_1,U_2\rangle_{\mu}=\sum_{e=(v\rightarrow w)}\left( \frac{\mu(v)}{\mu(w)}. \frac{\mu(v)+\mu(w)}{2}\right) U_1(e)U_2(e).
\end{align}
 
Then the corresponding optimal transport distance between a set of measures ($\mu_0,\mu_1$) supported on $\mathcal{V}$ can be written as 
\begin{align}
\tilde{W}_N(\mu_0,\mu_1)=\inf_{U(t,e)\geq 0, \mu(t,v)\geq 0} \int_0^1\|U(t,.)\|_{\mu(t,.)}dt,\\
\text{ such that Eq (\ref{eq:adv_graph}) holds, and }\nonumber \\
\mu(0,v)=\mu_0(v),\mu(1,v)=\mu_1(v) \:\:\:\:\forall v\in V.\nonumber 
\end{align}

Here $\|U(t,.)\|_{\mu(t,.)}\triangleq\sqrt{\langle U,U\rangle}_{\mu}$. This approach is motivated by the previously discussed Benamou-Brenier approach for optimal transport on continuous spaces, and results in the following advection based convex optimization problem. \begin{align}
\tilde{W}_N(\mu_0,\mu_1)^2=\inf_{J(t,e)\geq 0, \mu(t,v)\geq 0} \int_0^1 \sum_{e=(v\rightarrow w)}\frac{J(t,e)^2}{2} \left(\frac{1}{\mu(t,v)}+\frac{1}{\mu(t,w)}\right)dt,\label{eq:OT_ADV1}\\
\mu(0,v)=\mu_0(v),\mu(1,v)=\mu_1(v) \:\:\:\:\forall v\in V,\label{eq:OT_ADV2}\\
\frac{d}{dt}\mu(t,.)=D^TJ(t,.),\label{eq:OT_ADV3}
\end{align}
where $J(t,e)\triangleq \mu(t,v)U(t,e)$ for $e=(v\rightarrow w)$, and $D\in \mathbb{R}^{|\mathcal{E}|\times|\mathcal{V}|}$ is the linear flow operator computing $\mu(w)-\mu(v)$ for each $e=(v\rightarrow w)\in \mathcal{E}$. \PG{Specifically, $D^T(i,j)$ equals $+1$ if $j$th edge points into $i$th vertex, $-1$ if $j$th edge points out of $i$th vertex, and $0$ if $j$th edge is not connected to $i$th vertex. Hence Eq. (\ref{eq:OT_ADV3}) is a rewriting of Eq. (\ref{eq:adv_graph}) in terms of $J(t,.).$ The change of variables from $U$ to $J$ is analogous to the change of variables in Brenier-Benamou formulations, as discussed earlier in this section. }The convergence of $\tilde{W}_N$ to $W_2$, the $2$-Wasserstein distance on a continuous phase space (a  $d$-torus), as $N\rightarrow \infty$ is studied in Ref. \cite{gigli2013gromov}. 
\paragraph{\bf{Continuous-Discrete Analogy:}} Conceptually, one can regard the problem described by Eqs. (\ref{eq:OT_ADV1}-\ref{eq:OT_ADV3}) as the graph-based analogue of the optimal transport problem given in Eq. (\ref{eq:OT3}). Recall that this corresponds to \emph{single-integrator dynamics} $\dot{x}=u(t)$. In the next section, we use this interpretation, and generalize this graph-based framework to nonlinear dynamical systems of the form given in Eq. (\ref{eq:sys1}).

\section{Problem Setup and Computational Approach } \label{sec:Algo}
\subsection{Formulation of Optimal Transport Problem on Graphs}
Let $M \subset \mathbb{R}^d$ be an open bounded connected subset of an Euclidean space with piecewise smooth boundary. For a collection of analytic time-invariant vector fields $\lbrace g_i \rbrace_{i=1}^n $ and possibly time-varying vector field $g_0$ on $M$, consider the control affine system of the form
\begin{eqnarray}
\label{eq:ctrl_af}
\dot{x}(t) = g_0(x(t),t) +\sum_{i=1}^n u_i(t)g_i(x(t)), \nonumber \\
x(0) =x_0.
\end{eqnarray}
Then given the densities $\rho_0$ and $\rho_1$ on $M$, the corresponding optimal transport problem of interest is 
the following
\begin{gather}
\inf_{u(x,t),\rho(x,t)} \int_{\mathbb{R}^n}\int_{t_0}^{t_f} \sum_{i=1}^n \rho(x,t)|u_i(x,t)|^2dt dx ,\label{eq:OT_CF}\\
\text{ s.t. } \frac{\partial\rho(x,t)}{\partial t}+\nabla \cdot(\rho(x,t)g_0(x,t))+ \sum_{i=1}^n \nabla \cdot (\rho(x,t)u_i(x,t)g_i(x))=0 \hspace{2mm} ,x \in M, \label{eq:SG_CF} \\
\vec{n} \cdot (g_0(x,t)\rho(x,t) + \sum_i^n u_i(x,t)g_i(x)\rho(x,t) ) = 0 \hspace{2mm} a.e. \hspace{2mm} x \in \partial M, \nonumber \\
\rho(x,t_0)=\rho_0(x), \rho(x,t_f)=\rho_1(x).\nonumber
\end{gather}
\PG{Here, $\vec{n}$ is the outward normal vector at the boundary of $M$, and we have assumed zero mass flux boundary conditions.}

We approximate the optimal transport problem using a sequence of optimal transport problems on graphs. A key tool is to approximate the (time-varying) generator of the semigroup corresponding to the Eq. (\ref{eq:SG_CF}) using generator approximations on a finite state space \cite{froyland2013estimating}, as discussed in Section \ref{sec:TOaIG}. Hence, we approximate solutions of optimal transport problems on an Euclidean space using solutions of optimal transport problems on graphs. \\

\paragraph{\textbf{\PGn{Construction of Graph $\mathcal{G}$} :} }Towards this end, we partition M into $m$ d-dimensional connected, positive volume subsets $P_m = \lbrace  B_1,B_2 . . . , B_m \rbrace$. Additionally, we assume that the boundaries $\partial B_i$ are piecewise smooth. Then we can consider the optimal transport problem on a graph $\mathcal{G}=(\mathcal{V},\mathcal{E})$ where the the cardinality of $\mathcal{V}$ is $m$ and the connectivity of the graph is determined by the topology of $M$ and the partition $P_m$. More specifically, $\mathcal{V} = \lbrace 1,2.....m \rbrace$ and an element $e = (v \rightarrow w) \in \mathcal{E}$ for $v,w \in \mathcal{G}$ and $v \neq w$ if $\bar{B}_v \cap \bar{B}_w$ has non-zero $d-1$-dimensional measure. The graph $\mathcal{G}$ is {\it strongly connected}, i.e., for any two vertices, $v_0,v_T \in \mathcal{V}$ there exists a directed path of $r$ vertices, $(v_1,v_2....v_r)$ in $\mathcal{V}$, such that $(v_i \rightarrow v_{i+1}) \in \mathcal{E}$ for each $i \in \lbrace 1,2....r-1 \rbrace $. Moreover, this graph is also {\it symmetric}, that is, $e = (v \rightarrow w) \in \mathcal{E}$ implies $\bar{e}$ defined by $\bar{e} = (w \rightarrow v)$ is also in $\mathcal{E}$. 

In order to apply the approximation procedure from Ref. \cite{froyland2013estimating}, we express the continuity Eq. (\ref{eq:SG_CF}), as a bilinear control system,
\begin{equation}
\dot{y}(t) = \mathcal{A}_0(t)y + \sum_{i=1}^n \mathcal{A}_i(\hat{u}_i(t)y(t)),
\label{eq:BLsys}
\end{equation}

where $\mathcal{A}_0(t) = -\nabla \cdot (g_0(x,t) \hspace{1mm} \cdot \hspace{1mm})$ for each $t \in [0,1]$, $\hat{u}_i(t) = u_i(\cdot,t) $, $y(t) = \rho(\cdot,t)$,  $\mathcal{A}_i = -\nabla \cdot (g_i(x) \hspace{1mm} \cdot \hspace{1mm})$. Note that the right hand side of a bilinear system is traditionally expressed in the form  $A(t)\rho(t) + u(t)B\rho(t)$ in control theory literature \cite{elliott2009bilinear}. The form in Eq. (\ref{eq:BLsys}) is equivalent for systems on finite-dimensional state spaces, but not for general infinite dimensional bilinear systems if $\hat{u}(t)$ is not a scalar for each $t \in [t_0,t_f]$. For example, in the continuity equation, one can see that $u(x,t) \nabla \cdot (\rho(x,t)) \neq \nabla \cdot ( u(x,t)\rho(x,t))$ in general. Hence, the form Eq. (\ref{eq:BLsys}) is more appropriate for expressing the system in Eq. (\ref{eq:SG_CF}). 

In Section \ref{sec:TOaIG}, it was discussed how generators of  semigroups corresponding to the continuity equation can be used to define a approximating semigroup on a graph generated by appropriately constructed transition rate matrices. This method can be generalized to the controlled continuity equation, Eq. (\ref{eq:SG_CF}). A natural extension is to consider approximations of the control operators $\mathcal{A}_i$ using corresponding transition rate matrices, and analogously construct a controlled Markov chain on the space $\mathcal{V}$. However, we note that typically for a controlled Markov chain, the control parameters are constrained to be non-negative. Hence, a direct approximation of $\mathcal{A}_i$ using transition rate matrices and constraining $\hat{u}_i(t)$ to be positive would negate the possibility that one can flow both backward and forward along the control vector fields, which is critical for controllability of the system. Hence, to account for this in the approximation procedure, we define a bilinear control system equivalent to the one in Eq. (\ref{eq:BLsys}), but with positivity constraints on the control:
\begin{equation}
\dot{y}(t) = \mathcal{A}_0(t)y + \sum_{s \in \lbrace +,- \rbrace}\sum_{i=1}^n \mathcal{A}^s_i(\hat{u}^s_i(t)y(t)) \hspace{2mm}; \hspace{2mm} \hat{u}^s_i(t) \geq 0
\label{eq:BLsys2}
\end{equation}
where $\mathcal{A}^+_i = -\mathcal{A}^-_i = \mathcal{A}_i$ for each $i \in \lbrace 1,2....n \rbrace$.

\PG{Using the methodology introduced in Section \ref{sec:TOaIG}, for each of the operators $\mathcal{A}_0$, $\mathcal{A}^s_i$, we construct the control operators on the graph $\mathcal{G}$, which are denoted by $A_0 : [0,T] \times \mathcal{E} \rightarrow \mathbb{R}^+$ and  $A^s_i : \mathcal{E} \rightarrow \mathbb{R}^+$ \PG{(Recall that only $g_0$ is possibly time-varying, while $g_i$, $i>0$, are all time-invariant)}. The difference is that while generators in Section \ref{sec:TOaIG} were defined as \emph{vertex-based} $|\mathcal{V}|\times |\mathcal{V}|$ transition rate matrices, here we construct \emph{edge-based} vectors of size $\mathbb{|\mathcal{E}|}$ in a natural way.  Hence, $A_0$ is the edge-based version of the generator constructed from the vector field $g_0(\mathbf{x},t)$ using the formula in Eq. (\ref{eq:gen1}). For $A^s_i$, the corresponding transition rates are defined as}

\begin{equation}\label{eq:Aplus}
   A^+_i(e) = A^+_i(v \rightarrow w)=
                  \frac{1}{m(B_w)} \int_{\bar{B}_v \cap                               \bar{B}_w} max \lbrace g_i(x) \cdot \mathbf{n}_{v w},0)dm_{d-1}(x),
\end{equation}    
\begin{equation}\label{eq:Aminus}
   A^-_i(e) = A^-_i(v \rightarrow w)=
                  \frac{1}{m(B_w)} \int_{\bar{B}_v \cap                               \bar{B}_w} max \lbrace -g_i(x) \cdot \mathbf{n}_{v w},0)dm_{d-1}(x),
\end{equation} for \PG{$(i=1,\hdots,n)$}                
and where $\mathbf{n}_{vw}$ is the unit normal vector pointing out of $B_v$ into $B_w$ at $x$. \\

\paragraph{\textbf{\PGn{Construction of Control Graph $\mathcal{G}_c$, and Drift Graph $\mathcal{G}_0$ } :}} 
Let $\mathcal{P}(\mathcal{V})$ be the space of probability densities on the finite state space, $\mathcal{V}$. Then using the above parameter definitions, we consider the following flows on the graph $\mathcal{G}$,

\begin{align}
\frac{d}{dt}\mu(t,v)= & \sum_{e=(w\rightarrow v)}A_0(t,e)\mu(t,w)-\sum_{e=(v\rightarrow w)}A_0(t,e)\mu(t,v) \nonumber \\
& + \sum_{s \in \lbrace +,- \rbrace} \sum_{i=1}^n \sum_{e=(w\rightarrow v)}A^s_i(e)U^s_i(t,e)\mu(t,w)-\sum_{s \in \lbrace +,- \rbrace} \sum_{i=1}^n \sum_{e=(v\rightarrow w)}A^s_i(e)U^s_i(t,e)\mu(t,v)\label{eq:adv_graph_CF}, 
\end{align}
where $\mu(t,\cdot) \in \mathcal{P}(\mathcal{V})$ for each $t \in [0,T]$, and $U^s_i(t,\cdot)$ are the edge-dependent non-negative `control' parameters that scale the transition rates, $A^s_i(e)$. \KE{We associate a set of edges $\mathcal{E}_i^s$ with the above controlled flow. For each $s \in \lbrace +,- \rbrace$ and $i \in \lbrace 1,2,...n \rbrace$ we set $e \in \mathcal{E}_i^s$ if $A^s_i(e) \neq 0$. Similarly, we define $\mathcal{E}_0$ by setting $e \in \mathcal{E}_0$ if $A_0(t,e) \neq 0$ for some $t \in [0,1]$. 
\PG{Using these definitions we define the {\it control graph} $\mathcal{G}_c = (\mathcal{V},\mathcal{E}_c)$ by setting $\mathcal{E}_c = \cup_{s \in \lbrace +,-\rbrace}\cup_{i=1}^n\mathcal{E}_i^s$, and the {\it drift graph} $\mathcal{G}_0=(\mathcal{V},\mathcal{E}_0)$.} These definitions will be used in Section \ref{sec:GraphFlCtr}.}

The above defined flows can be shown to correspond to the evolution of a time-inhomogeneous continuous-time Markov chain on the finite state space, $\mathcal{V}$. The evolution of the corresponding stochastic process $X(t) \in \mathcal{V}$ over an edge, $e = (w \rightarrow v) \in \mathcal{E}$, is defined by the conditional probabilities:  
\begin{equation}
\mathbb{P}(X(t+h) = v | X(t) = w) = A_0(t,e) + \sum_{s \in \lbrace +,- \rbrace} \sum_{i=1}^n \sum_{e=(w\rightarrow v)}A^s_i(e)U^s_i(t,e) + o(h).
\end{equation}
%
%

\vspace{2mm}
This leads us to the approximating optimal transport problem on a graph, motivated by the formulation in Section \ref{subsec:MKG}:
\begin{align}
\label{eq:Caf_OT_Gr}
\tilde{W}(\mu_0,\mu_1)=\inf_{U^s_i(t,e)\geq 0, \mu(t,v)\geq 0} \sum_{s \in \lbrace +,- \rbrace} \sum_{i=1}^n \int_0^1\|U^s_i(t,.)\|_{\mu(t,.)}dt\\
\text{ such that Eq. (\ref{eq:adv_graph_CF}) holds, and }\nonumber \\
\mu(0,v)=\mu_0(v),\mu(1,v)=\mu_1(v) \:\:\:\:\forall v\in \mathcal{V}\nonumber 
\end{align}

\vspace{2mm}

Again, the formulation in Section \ref{subsec:MKG} motivates the following convex formulation of the above problem 
\begin{gather}
\label{eq:convex_OT_G}
\tilde{W}(\mu_0,\mu_1)^2=\inf_{J^s_i(t,e)\geq 0, \mu(t,v)\geq 0} \sum_{s \in \lbrace +,- \rbrace} \sum_{i=1}^n \int_0^1 \sum_{e=(v\rightarrow w)}\frac{J^s_i(t,e)^2}{2} \left(\frac{1}{\mu(t,v)}+\frac{1}{\mu(t,w)}\right)dt,\\
\mu(0,v)=\mu_0(v),\mu(1,v)=\mu_1(v) \:\:\:\:\forall v\in \mathcal{V},\nonumber \\
\frac{d}{dt}\mu(t,.)= \sum_{e=(w\rightarrow v)}A_0(t,e)\mu(t,w)-\sum_{e=(v\rightarrow w)}A_0(t,e)\mu(t,v) + \sum_{s \in \lbrace +,- \rbrace} \sum_{i=1}^n (D^s_i)^\intercal J^s_i(t,.),
\end{gather}

where $J^s_i(t,e)\triangleq \mu(t,v)U^s_i(t,e)$ for $e=(v\rightarrow w)$, $i= \lbrace 1,2...n \rbrace$, and $D^s_i\in \mathbb{R}^{|\mathcal{E}^s_i|\times|\mathcal{V}|}$ is the linear flow operator computing $\mu(w)-\mu(v)$ for each $e=(v\rightarrow w)\in \mathcal{E}^s_i$. 
\remark{}
We note that the controlled advection equation Eq. (\ref{eq:adv_graph_CF}), and the corresponding convex optimal transport problem in Eq. (\ref{eq:convex_OT_G}) can be simplified if control vector fields are uni-directional across all boundaries $\partial B_i$. This can often be achieved by choosing the grid carefully, and making the subvolumes $B_i$ small enough. If this condition holds, then we immediately see from Eqs. (\ref{eq:Aplus}-\ref{eq:Aminus}) that for each edge $e=(v\rightarrow w)$, only one of $A_i^+(e)$ and $A_i^-(e)$ is non-zero. Denote the non-zero matrix by $A_i(e)$. It also follows that $A_i(e)=A_i(\bar{e})$, where $\bar{e}=(w\rightarrow v)$. Then the simplified version of Eq. (\ref{eq:adv_graph_CF}) is
\begin{align}
\frac{d}{dt}\mu(t,v)= & \sum_{e=(w\rightarrow v)}A_0(t,e)\mu(t,w)-\sum_{e=(v\rightarrow w)}A_0(t,e)\mu(t,v) \nonumber \\
& + \sum_{i=1}^n \sum_{e=(w\rightarrow v)}A_i(e)U_i(t,e)\mu(t,w)- \sum_{i=1}^n \sum_{e=(v\rightarrow w)}A_i(e)U_i(t,e)\mu(t,v)\label{eq:adv_graph_CF1}.
\end{align}

This results in the following convex optimal transport problem,

\begin{align}\label{eq:Caf_OT_Gr1}
\tilde{W}(\mu_0,\mu_1)^2=\inf_{J_i(t,e)\geq 0, \mu(t,v)\geq 0} \sum_{i=1}^n \int_0^1 \sum_{e=(v\rightarrow w)}\frac{J_i(t,e)^2}{2} \left(\frac{1}{\mu(t,v)}+\frac{1}{\mu(t,w)}\right)dt,\\
\mu(0,v)=\mu_0(v),\mu(1,v)=\mu_1(v), \:\:\:\:\forall v\in \mathcal{V}\nonumber \\
\frac{d}{dt}\mu(t,.)= \sum_{e=(w\rightarrow v)}A_0(t,e)\mu(t,w)-\sum_{e=(v\rightarrow w)}A_0(t,e)\mu(t,v) +  \sum_{i=1}^n (D_i)^\intercal J_i(t,.).
\end{align}

\remark{}
 We note that the Eq. (\ref{eq:adv_graph}) discussed in Section \ref{subsec:MKG} can be seen as the special case of Eq. (\ref{eq:adv_graph_CF1}) with $g_0\equiv 0$ and $g_i=\hat{i}$ (the $i$th unit vector). Hence, our formulation generalizes optimal transport on graphs from a single-integrator system to general nonlinear control-affine systems. 
 
 While a rigorous proof of convergence of $\tilde{W}$ as defined in Eq. (\ref{eq:Caf_OT_Gr}) or Eq. (\ref{eq:Caf_OT_Gr1}) to $W_2$ is not provided here, the connection to Eq. (\ref{eq:adv_graph}) provides a heuristic argument in this direction. As discussed in Section \ref{subsec:MKG}, Ref. \cite{gigli2013gromov} provides such a convergence proof for an advection equation on graphs. The advection is modeled using anti-symmetric discrete `momentum vector fields' $V$ on the edges, and the optimal transport problem minimizes a discrete action. For the driftless case, Eq. (\ref{eq:adv_graph_CF1}) satisifies those conditions due to the way the transition matrices $A_i(e)$ (which give edge-weights) are defined, and our definition of $\tilde{W}$ agrees with the one given in Ref. \cite{gigli2013gromov}. We also note in general when $A_0 \neq 0$, the solution of the optimization problem $\tilde{W}$ does not necessarily define a metric on $\mathcal{P}(\mathcal{V})$ due to the asymmetry that is possibly induced by the drift vector fields. 

\subsection{Controllability Analysis of Flow over Graphs}\label{sec:GraphFlCtr}\PGn{
In this section, we establish that the controlled Markov chain approximations Eq. (\ref{eq:adv_graph_CF}) preserve the controllability properties of the system in Eq. (\ref{eq:ctrl_af}). In other words, we will show that if the underlying dynamical system Eq. (\ref{eq:ctrl_af}) satisfies some controllability conditions, then the dynamical system Eq. (\ref{eq:adv_graph_CF}) governing the evolution of measure on the graph $\mathcal{G}$ is also controllable in some precise sense. This will ensure the well-posedness of the graph optimal transport problem, Eq. (\ref{eq:Caf_OT_Gr}), since optimal transport is meaningful only if the set of possible transports between a pair of measures is non-empty. 

Our plan is as follows. First, in Theorem \ref{thm:strngcon}, we prove that controllability of Eq. (\ref{eq:ctrl_af}) results in the control graph $\mathcal{G}_c$ being strongly connected, and equal to $\mathcal{G}$. In the subsequent theorems, we show that the strongly connected property of $\mathcal{G}_c=\mathcal{G}_c$ implies that the system defined by Eq. (\ref{eq:adv_graph_CF}) is controllable between any pair of measures in the interior of $\mathcal{P}(\mathcal{V})$. This is first shown for the case of driftless systems (i.e., $g_0\equiv 0$) in Theorem \ref{thm:g0}, and then for systems with drift  (i.e., $g_0\not\equiv 0$) in Theorem \ref{drfl_ctrb}. Here, the interior of $\mathcal{P}(\mathcal{V})$ is defined as the set $int (\mathcal{P}(\mathcal{V}))= \lbrace \mu \in \mathcal{P}(\mathcal{V}); \mu(v)>0 ~~ \text{for each} ~~  v \in \mathcal{V}  \rbrace$.\\
}
Without loss of generality, we consider the case when $t_0=0 \:\:\& \:\: t_f=1$. First, we recall a few standard notions from geometric control theory \cite{bloch2003nonholonomic}.

\begin{definition}
Given $x_0  \in M$ we define $R(x_0,t)$ to be the set of all $x \in M$ for which there exists an admissible control $\mathbf{u} = (u_1, u_2....u_n)$ such that there exists a trajectory of  system in Eq. (\ref{eq:ctrl_af}) with $x(0) = x_0$, $x(t) = x$. The \textbf{reachable set from $x_0$ at time $T$} is defined to be
\begin{equation}
R_T(x_0) = \cup_{0 \leq t \leq T} R(x_0,t)
\end{equation}
\end{definition}

\begin{definition}
We say the system in Eq. (\ref{eq:ctrl_af}) is \textbf{small-time locally
controllable} from $x_0$ if $x_0$ is an interior point of $R_T (x_0)$ for any $T > 0$.
\end{definition}

\begin{definition}
Let $f = (f_1, ... f_d)$ and $g = (g_1, ... g_d)$ be two smooth vector fields on $M$. Then the \textbf{Lie bracket} $[f,g]$ is defined to be the vector field with components
\begin{equation}
[f,g]^i= \sum_{j=1}^d \bigg ( f^j \frac{\partial g^i}{\partial x^j} - g^j \frac{\partial f^i}{\partial x^j} \bigg )
\end{equation} 
\end{definition}

\begin{definition}
For a collection of vector fields $\lbrace g_i \rbrace$,  $\mathbf{Lie \lbrace g_i \rbrace}$  refers to the smallest Lie sub-algebra of set of smooth vector fields  on $M$ that contains $\lbrace g_i \rbrace$. $\mathbf{Lie_x \lbrace g_i \rbrace}$ refers to the span of all vector fields in $Lie \lbrace g_i \rbrace$ at $x \in M$
\end{definition}

\KE{We will use the notation $int(S)$ to refer to the interior of a set $S$.} Using these definitions we have the following result
\begin{theorem}\label{thm:strngcon}
Suppose one of the following statements is true:
\begin{enumerate}
  \item $g_0 \equiv 0$ and $Lie_x \bigg \lbrace g_i: i \in \lbrace 1,2....n \rbrace \bigg \rbrace = T_xM$ at each $x \in int(M)$.
  \item $span \bigg \lbrace g_i(x): i \in \lbrace 1,2....n \rbrace \bigg \rbrace = T_xM$ at each $x \in int(M)$.
\end{enumerate}
Then the graph $\mathcal{G}_c$ associated with the system in Eq. (\ref{eq:adv_graph_CF}) is strongly connected and $\mathcal{G}_c = \mathcal{G}$. 
\end{theorem}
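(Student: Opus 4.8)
The plan is to prove the single set equality $\mathcal{E}_c = \mathcal{E}$; once this is established, strong connectedness of $\mathcal{G}_c$ comes for free, since $\mathcal{G}$ is strongly connected by construction. The inclusion $\mathcal{E}_c \subseteq \mathcal{E}$ is immediate: by the definitions (\ref{eq:Aplus})--(\ref{eq:Aminus}), a rate $A_i^s(v\to w)$ can be nonzero only when $\bar{B}_v\cap\bar{B}_w$ carries positive $(d-1)$-dimensional measure, which is exactly the condition placed on edges of $\mathcal{G}$. Hence the whole content of the theorem is the reverse inclusion: every $e=(v\to w)\in\mathcal{E}$ must satisfy $A_i^s(e)\neq 0$ for some $i$ and some sign $s$, i.e. some control field must cross the shared face.

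First I would reformulate this pointwise. Writing $\Gamma = \bar{B}_v\cap\bar{B}_w$ and $\mathbf{n}=\mathbf{n}_{vw}$ for the constant normal of the hyperplane $H$ containing the flat face $\Gamma$, the failure of the reverse inclusion for a given $e$ is equivalent, by non-negativity of the integrands in (\ref{eq:Aplus})--(\ref{eq:Aminus}), to $g_i\cdot\mathbf{n}=0$ almost everywhere on $\Gamma$ for every $i$. I would argue by contradiction, assuming this tangency. Because each $g_i$ is analytic and $\mathbf{n}$ is constant on $H$, the function $h_i := g_i\cdot\mathbf{n}$ is real-analytic, and its vanishing on the positive-measure set $\Gamma\subset H$ forces $h_i\equiv 0$ on the relative interior of $\Gamma$; thus each $g_i$ is genuinely tangent to $H$ on a relatively open piece of $\Gamma$. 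This analytic upgrade from \emph{almost everywhere} to \emph{identically on an open subset of $H$} is what lets me differentiate along $\Gamma$.

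Case 2 is then short. The span hypothesis gives $\mathrm{span}\{g_1(x),\dots,g_n(x)\}=T_xM=\mathbb{R}^d$ at every interior point, and the relative interior of $\Gamma$ lies in $int(M)$; but tangency places all $g_i(x)$ in the proper subspace $\mathbf{n}^\perp$, a contradiction. The work is in Case 1, which is where I expect the main obstacle. Here $g_0\equiv 0$ and only the bracket-generating condition $\mathrm{Lie}_x\{g_i\}=T_xM$ is available, so no single $g_i$ need cross $\Gamma$. The key is a tangency-propagation step: if vector fields $X,Y$ satisfy $X\cdot\mathbf{n}=Y\cdot\mathbf{n}=0$ on the relative interior of $\Gamma$, then, using that $\mathbf{n}$ is constant,
\[ [X,Y]\cdot\mathbf{n} = (X\cdot\nabla)(Y\cdot\mathbf{n}) - (Y\cdot\nabla)(X\cdot\mathbf{n}), \]
and since $X$ (resp. $Y$) is tangent to $H$ there while $Y\cdot\mathbf{n}$ (resp. $X\cdot\mathbf{n}$) vanishes on the relatively open set $\Gamma$, each directional derivative is taken along $H$ and hence vanishes on $\Gamma$. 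Thus $[X,Y]\cdot\mathbf{n}=0$ on $\Gamma$ as well. Starting from the $g_i$ and inducting on bracket length, every element of $\mathrm{Lie}\{g_i\}$ has zero normal component on $\Gamma$, whence $\mathrm{Lie}_x\{g_i\}\subseteq\mathbf{n}^\perp\subsetneq T_xM$ for $x$ in the relative interior of $\Gamma$ — contradicting the bracket-generating hypothesis at that interior point.

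With both cases yielding a contradiction, every $e\in\mathcal{E}$ belongs to $\mathcal{E}_c$, so $\mathcal{E}=\mathcal{E}_c$, $\mathcal{G}_c=\mathcal{G}$, and strong connectedness is inherited from $\mathcal{G}$. The delicate point I would write out most carefully is the bracket computation: it relies essentially on the faces being flat (so $\mathbf{n}$ is constant and the product rule carries no curvature terms), on $\Gamma$ being relatively open in $H$ (so tangential derivatives of a function vanishing on $\Gamma$ vanish), and on analyticity to convert the measure-theoretic vanishing supplied by (\ref{eq:Aplus})--(\ref{eq:Aminus}) into pointwise vanishing on an open set where differentiation is legitimate.
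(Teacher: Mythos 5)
Your proof is correct, but it takes a genuinely different route from the paper's. The paper argues at the level of trajectories: condition 1 gives small-time local controllability (Chow's theorem), a continuous path from $int(B_v)$ to $int(B_w)$ is approximated by a trajectory generated by piecewise-constant controls, and at the point where that trajectory crosses the shared face some $g_r$ must have nonzero normal component (else the trajectory could not leave the face), which by continuity makes the flux integrals in Eqs. (\ref{eq:Aplus})--(\ref{eq:Aminus}) positive. You instead argue infinitesimally: assuming every $g_i \cdot \mathbf{n}$ vanishes a.e.\ on the face, you upgrade to identical vanishing on the relative interior and show that tangency is preserved by Lie brackets, trapping $Lie_x \lbrace g_i \rbrace$ inside $\mathbf{n}^\perp$ and contradicting hypothesis 1; hypothesis 2 is immediate. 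This is in effect a rigorous, elementary rendering of the paper's own remark following the theorem (which appeals to the Orbit theorem and a lower-dimensional invariant submanifold): your hyperplane plays the role of that submanifold, and your bracket computation replaces the Orbit theorem. Your route needs no controllability theory and avoids the delicate steps in the paper's proof (choosing $\gamma$ and $\epsilon$ to avoid corners, and the somewhat informal claim that the approximating trajectory ``cannot leave $\partial B_v$''); note also that continuity alone, not analyticity, already gives the a.e.-to-everywhere upgrade on the relative interior. What the paper's route buys is indifference to the geometry of the interfaces: its crossing argument works for the piecewise-smooth (possibly curved) faces allowed by the partition, whereas your bracket identity uses constancy of $\mathbf{n}$, i.e.\ flat faces --- a restriction you flag yourself, and one that is easily repaired by invoking the standard lemma that the bracket of two vector fields tangent to a smooth submanifold is again tangent to it, applied on a smooth positive-measure piece of the face. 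Finally, both your proof and the paper's implicitly use that relative-interior points of a shared face lie in $int(M)$, where the rank and bracket hypotheses are assumed to hold.
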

\PGn{The proof of Theorem \ref{thm:strngcon} is provided in the appendix.}

\vspace{2mm}

\begin{remark}
The above result can also be seen to, equivalently, follow from the Orbit theorem \cite{agrachev2013control}[Theorem 5.1]. $\mathcal{G}_c \neq \mathcal{G}$ would imply the existence of a lower dimensional {\it immersed-submanifold}, $K$,  of $M$ such that $Lie_x \bigg \lbrace g_i: i \in \lbrace 1,2....n \rbrace \bigg \rbrace \subseteq T_xK$ for all $x$ in a neighborhood of a point in the boundary of one of the elements in $P_m$.
\end{remark}

\vspace{2mm}

\begin{remark}
The main obstruction in extending the above result for underactuated systems (span $\big \lbrace g_i(x): i \in \lbrace 1,2....n \rbrace \big \rbrace \neq T_xM$ for some $x \in M$) with drift, i.e. $g_0\not\equiv 0$, is that usual tests for small-time local controllability of control systems with drift \cite{sussmann1987general} require the initial condition to be an equilibrium point. Hence, starting at a non-equilibrium initial condition one might need to make large excursions (in our case, possibly outside the domain $M$) in order to return to the initial condition. Take for example, the simplest control-affine system with drift, the double integrator: $\ddot{x} = u$. Hence, given an initial and target density, the optimal transport problem on a bounded domain might not admit a solution for a system with drift if $M$ is not taken to be large enough.\end{remark}

\vspace{2mm}

\PGn{In the following, we show that Eq. (\ref{eq:adv_graph_CF}) has certain a controllability property for the case when the underlying system is driftless (i.e., $g_0\equiv 0$).} The proof follows from a more general result proved in Ref. \cite{elamvazhuthi2017mean} where the controllability result was proved for the case when $A_i(t,e)$ is either equal to $0$ or $1$ for each $i \in \lbrace 1,2....n \rbrace$ and each $e \in \mathcal{G}_c$ and $\mathcal{G}_c$ is only required to be strongly connected. Here we give an alternative proof for the case when $\mathcal{G}_c$ is strongly connected and symmetric, to keep the paper self-contained.\\

\begin{theorem}\label{thm:g0}
Consider $\mu_0, \mu_1 \in \KE{int (\mathcal{P}(\mathcal{V}))}$, $\mathcal{G}_c=\mathcal{G}$ strongly connected, and $A_0(t,e) = 0$ for every $e \in \mathcal{E}$ and all $t \in [0,1]$. Then there exist piecewise continuous $U^s_i(t,\cdot) \geq 0$ such that the solution of Eq. (\ref{eq:adv_graph_CF}), $\mu(t, \cdot)$ satisfies $\mu(0,\cdot) = \mu_0$ and $\mu(1,\cdot) = \mu_1$.
\end{theorem}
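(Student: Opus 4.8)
The plan is to pass to the mass-flux variables used in the convex formulation Eq. (\ref{eq:convex_OT_G}) and to exploit that, once the state $\mu$ is kept in the interior of the simplex, controllability reduces to a purely static, graph-theoretic statement about which vertex-divergences can be produced by nonnegative edge flows. Setting $A_0 \equiv 0$ and writing, for each family $(i,s)$ and each edge $e=(v\to w)$, the edge flux $J_i^s(t,e) := A_i^s(e)\,U_i^s(t,e)\,\mu(t,v)$, the dynamics Eq. (\ref{eq:adv_graph_CF}) becomes the linear control system $\frac{d}{dt}\mu(t,\cdot)=\sum_{s,i}(D_i^s)^{\intercal}J_i^s(t,\cdot)$, in which the state enters only through the requirement $\mu(t,v)>0$ needed to invert the relation between $J$ and $U$. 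Because $A_i^s(e)>0$ on $\mathcal{E}_i^s$ and $\mu(t,v)>0$, nonnegativity of $U_i^s$ is equivalent to nonnegativity of $J_i^s$, so it suffices to produce a nonnegative flux $J$ realizing a convenient trajectory and then recover $U_i^s(t,e)=J_i^s(t,e)/(A_i^s(e)\,\mu(t,v))$.

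First I would record the key static lemma: for the strongly connected graph $\mathcal{G}_c=\mathcal{G}$, the cone $\{\sum_{s,i}(D_i^s)^\intercal J : J\ge 0\}$ equals the whole tangent space $T\mathcal{P}(\mathcal{V})=\{\xi : \sum_{v}\xi(v)=0\}$. Each column of $(D_i^s)^\intercal$ is $\delta_w-\delta_v$ for the edge $e=(v\to w)$, where $\delta_v$ denotes the unit mass at vertex $v$, so the cone is generated by $\{\delta_w-\delta_v : (v\to w)\in\mathcal{E}\}$. Strong connectivity gives, for any ordered pair $(b,a)$, a directed path from $b$ to $a$; summing the generators along it telescopes to $\delta_a-\delta_b$, hence every elementary difference $\delta_a-\delta_b$ lies in the cone. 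Finally any mean-zero $\xi$ decomposes as a nonnegative combination of such differences (a transportation plan from its negative part to its positive part, whose marginals have equal total mass), so the cone is all of $T\mathcal{P}(\mathcal{V})$.

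With the lemma in hand the construction is immediate. Since $\mu_0,\mu_1\in int(\mathcal{P}(\mathcal{V}))$, the straight-line interpolation $\mu(t,\cdot)=(1-t)\mu_0+t\mu_1$ has mean-zero velocity $\dot\mu=\mu_1-\mu_0$ and, by convexity of the simplex together with interiority of the endpoints, satisfies $\mu(t,v)>0$ for every $v$ and every $t\in[0,1]$. Choose a single time-independent flux $J^\star\ge 0$ with $\sum_{s,i}(D_i^s)^\intercal J^\star=\mu_1-\mu_0$, which exists by the lemma. Running the system with $J\equiv J^\star$ reproduces exactly the straight line (it solves $\dot\mu=\mu_1-\mu_0$ with $\mu(0,\cdot)=\mu_0$), so $\mu(1,\cdot)=\mu_1$. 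Setting $U_i^s(t,e)=J_i^{s,\star}(e)/(A_i^s(e)\,\mu(t,v))$ for $e=(v\to w)$ then yields nonnegative controls that are continuous, indeed smooth, in $t$ because the denominator stays bounded away from $0$; in particular they are piecewise continuous as required.

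The main obstacle is the static lemma, i.e. verifying that the constraint $J\ge 0$ does not shrink the reachable set of velocities below the full tangent space; this is exactly where strong connectivity of $\mathcal{G}_c$ (and, conveniently, the symmetry of $\mathcal{G}$, which supplies a reverse edge for every edge and so makes the path routing and the $\pm$ bookkeeping transparent) enters. The remaining care is to keep $\mu$ in the interior so that the change of variables $J\mapsto U$ is well-defined; the straight-line choice handles this automatically, and the interiority hypothesis on $\mu_0,\mu_1$ is precisely what makes it possible, consistent with the earlier remark that endpoints on the boundary of the simplex may be unreachable.
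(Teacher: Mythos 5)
Your proof is correct, but it follows a genuinely different route from the paper's. The paper rewrites Eq.~(\ref{eq:adv_graph_CF}) as a bilinear system $\dot\mu=\sum_{s,i,e}A^s_i(e)U^s_i(t,e)B^{si}_e\mu$, uses the graph Laplacian to argue that the vectors $B^{si}_e\mathbf{1}$ span the tangent space of $\mathcal{P}(\mathcal{V})$, extracts a linearly independent subset, expands the velocity of an arbitrary $C^1$ interior path $\gamma$ in that basis with continuous coefficients $\tilde U^s_i(t,e)$ of arbitrary sign, and then restores nonnegativity by routing negative coefficients onto the reverse edge $\bar e$ --- a step that leans on the symmetry of $\mathcal{G}$ and on the identity $B^{si}_e\mu=\mu(t,v)B^{si}_e\mathbf{1}$, and which is also why the paper's controls are only piecewise continuous (they switch between $e$ and $\bar e$ when $\tilde U$ changes sign). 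You instead pass to flux variables $J^s_i=A^s_i U^s_i\,\mu(\cdot,v)$, prove a conical spanning lemma --- nonnegative fluxes on a strongly connected digraph generate every mean-zero velocity, by telescoping $\delta_w-\delta_v$ along directed paths and decomposing a general mean-zero vector via a transportation plan between its negative and positive parts --- and then realize the straight-line path with a single time-independent flux $J^\star$. Your route buys three things: the sign bookkeeping disappears (nonnegativity of $U$ is equivalent to nonnegativity of $J$ on each $\mathcal{E}^s_i$), symmetry of the graph is never used (only strong connectivity, which is exactly the generalization the paper only remarks on after its proof), and the controls come out smooth rather than piecewise continuous, since the denominator $\mu(t,v)$ is affine in $t$ and bounded below. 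What the paper's argument buys is that it tracks an \emph{arbitrary} interior $C^1$ path, so it doubles as a trajectory-tracking statement; your lemma applied pointwise in time would recover that too, at the cost of checking continuity of the flux selection. One point you should make explicit: the conclusion $\mu(1,\cdot)=\mu_1$ rests on uniqueness of solutions of the linear time-varying ODE obtained after fixing the open-loop controls $U^s_i(t,e)=J^{s,\star}_i(e)/\bigl(A^s_i(e)\,\mu(t,v)\bigr)$ --- the straight line is one solution with initial condition $\mu_0$, hence the solution; this is routine (continuous, hence locally Lipschitz, coefficients) but is currently implicit in ``running the system with $J\equiv J^\star$ reproduces exactly the straight line.''
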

\begin{proof}
\KE{We can represent the equation \eqref{eq:adv_graph_CF} as a bilinear control system of the form 

\begin{eqnarray}
\frac{d}{dt}\mu(t, \cdot) = \sum_{s \in \lbrace +,- \rbrace} \sum_{i=1}^n \sum_{e = (v \rightarrow w)}A^s_i(e)U^s_i(t,e)B^{si}_e \mu(t,\cdot)
\end{eqnarray}
where $B^{si}_e \in \mathbb{R}^{m \times m}$ is given by
\[
 (B^{si}_{e})_{pq} = 
  \begin{cases} 
   -1 & \text{if } p = q=  v\\
    1 & \text{if } p= w, \hspace{1mm} q = v\\
   0       & \text{otherwise}
  \end{cases}
\]
for each $s \in \lbrace +,-\rbrace$, $ i \in \lbrace 1,2,...n\rbrace$ and $e = (v \rightarrow w) \in \mathcal{E}$.
Here,  $(B^{si}_{e})_{pq}$ denotes the element in the $p^{th}$ row and $q^{th}$ column of the matrix $B^{si}_{e}$. Corresponding to the graph, $\mathcal{G}_c$, we define the adjacency matrix $A_c$ defined by
\[
 (A_c)_{pq} = 
  \begin{cases} 
   1 & \text{if } (p \rightarrow q) \in \mathcal{E}_c\\
    0 & \text{otherwise} \\
  \end{cases}
\]
Additionally, the degree matrix is a diagonal matrix $D_c$ where the diagonal elements $(D_c)_{jj}$ are equal to the total number of edges leaving the vertex $j \in \mathcal{V}$. Then the Laplacian matrix $L_c$ associated with the graph $\mathcal{G}_c$ is given by $L_c = A_c-D_c$. Since the graph $\mathcal{G}_c$ is symmetric, $L_c$ is a symmetric matrix. Alternatively, the Laplacian $L_c$ of the graph $\mathcal{G}_c$ can also be expressed as $L_c= \sum_{s \in \lbrace +,- \rbrace} \sum_{i=1}^n\sum_{e \in \mathcal{E}^{si}_{sub}}B^s_{ie} $ for some subsets $\mathcal{E}^{si}_{sub} $ of the set of edges $\mathcal{E}_c$ such that $A^s_i(e) \neq 0$ for each $s \in \lbrace +,-\rbrace$ and $i \in \lbrace 1, 2,...n\rbrace$ such that $ e \in  \mathcal{E}^{si}_{sub}$. Since, $\mathcal{G}_c$ is strongly connected and symmetric it follows that the rank of the matrix $L_c$ is $m-1$. Let $\mathbf{1}$ and $ \mathbf{0}$ be vectors of dimension $m$ with all elements equal to $1$ and  $0$ respectively. Then we know that that $ (B^{si}_e\mathbf{1})^T\mathbf{1} = 0$ for each $s \in \lbrace +,-\rbrace$ and $i \in \lbrace 1, 2,...n\rbrace$ such that $ e \in  \mathcal{E}^{si}_{sub}$. This implies that span of the set $\mathcal{B} = \cup_{s \in \lbrace +,- \rbrace} \cup_{i=1}^n\cup_{e \in \mathcal{E}^{si}_{sub}}\lbrace  B^s_{ie}  \mathbf{1}\rbrace $ is equal to the tangent space $ T_x(\mathcal{P}(\mathcal{V}))$ of $\mathcal{P}(\mathcal{V})$ at every $x \in int(\mathcal{P}(\mathcal{V}))$. Here, we are identifying the set $ T_x(\mathcal{P}(\mathcal{V}))$ with the set $\lbrace y \in \mathbb{R}^m; \sum_{j=1}^my_j = 0 \rbrace$. Let $\tilde{\mathcal{E}}_{sub}^{si} $  be subsets of $\mathcal{E}^{si}_{sub} $, such that the elements of $\tilde{\mathcal{B}} = \cup_{s \in \lbrace +,- \rbrace} \cup_{i=1}^n\cup_{e \in \tilde{\mathcal{E}}^{si}_{sub}}\lbrace  B^s_{ie}  \mathbf{1}\rbrace $ are linearly independent and span the set $ T_x(\mathcal{P}(\mathcal{V}))$ at every $x \in int (\mathcal{P}(\mathcal{V}))$. 

Next, we note that $int (\mathcal{P}(\mathcal{V}))$ is convex and hence there exists an atleast-once-differentiable path $\gamma  : [0,T] \rightarrow int (\mathcal{P}(\mathcal{V}))$ such that $\gamma(0)=\mu_0$ and $\gamma(1) =\mu_1$. Since span of $\tilde{\mathcal{B}}$ is equal $T_x(\mathcal{P}(\mathcal{V}))$ at every $x \in int(\mathcal{P}(\mathcal{V})$ and $\frac{d}{dt}\gamma(t) \in T_{\gamma(t)}(\mathcal{P}(\mathcal{V}))$ at every $t \in [0,1]$ it follows that there exist parameters $\tilde{U}^s_i(t,e)$ that are continuous with respect to time satisfying
\begin{equation}
\frac{d}{dt}\gamma(t) =  \sum_{s \in \lbrace +,- \rbrace} \sum_{i=1}^n \sum_{e \in \tilde{\mathcal{E}}^{si}_{sub} } \tilde{U}^s_i(t,e)B^{si}_e\mathbf{1}
\end{equation}  
for all $t \in [0,1]$.
Next, for each $s \in \lbrace +,- \rbrace$ and each $i \in \lbrace 1,2....n \rbrace$ we set    
\[
  U^s_i(t,e) = 
  \begin{cases} 
  \frac{\tilde{U}^s_i(t,e)}{A^s_i(e) \mu(t,v)} ~~ \text{if} ~~ U^s_i(t,e) \geq 0\\
   0       & \text{otherwise}
  \end{cases}
\]
and 
\[
  U^s_i(t,\bar{e}) = 
  \begin{cases} 
  -\frac{\tilde{U}^s_i(t,e)}{A^s_i(\bar{e}) \mu(t,w)} ~~ \text{if} ~~ U^s_i(t,e) \leq 0\\
   0       & \text{otherwise}
  \end{cases}
\]
whenever $e \in \tilde{\mathcal{E}}^{si}_{sub}$, for each $t \in [0,1]$.
Additionally, if for a given $s \in \lbrace +,- \rbrace$ and $i \in \lbrace 1,2....n \rbrace$ we have an edge $e \in \mathcal{E}$ such that $e \in \mathcal{E}_c \backslash \tilde{\mathcal{E}}^{si}_{sub}$, then we set 
\begin{equation}
U^s_i(t,e) = 0
\end{equation}
and 
\begin{equation}
U^s_i(t,\bar{e}) = 0
\end{equation}
for each $t \in [0,1]$.  Note that that $B^{si}_e \mu(t,\cdot) = \mu(t,v)B^{si}_e \mathbf{1}$ when $e = (v \rightarrow w)$. From this, the result follows by noting that for the choice of parameters $U^s_i(t,e) $ the solution of \eqref{eq:adv_graph_CF}, given by $\mu(t,\cdot ) = \gamma(t)$ for all $t \in [0,1]$.}
\end{proof}
The above proof can also be extended to the case when $\mathcal{G}_c$ is only strongly connected and not necessarily symmetric. \PG{Note that for the case when either of $(\mu_0, \mu_1)$ lie on the boundary of $\mathcal{P}(\mathcal{V})$, controllability of the system in Eq. (\ref{eq:adv_graph_CF}) might not hold, as pointed out in Ref. \cite{elamvazhuthi2017mean}. This does not affect our numerical results however, and we are able to achieve convergence in all cases we discuss in Section \ref{sec:Examples}.\\
}

\PGn{Theorem \ref{thm:g0} leads to the following result for the case of systems with drift (i.e., $g_0\not\equiv 0$).\\

\begin{theorem}
\label{drfl_ctrb}
Consider $\mu_0, \mu_1 \in \KE{ int(\mathcal{P}(\mathcal{V}))}$. Assume the graph $\mathcal{G}_c=\mathcal{G}$ is strongly connected, and $\mathcal{G}_0 \subseteq \mathcal{G}_c$. Then there exist $U_i(t,\cdot) \geq 0$ such that Eq. (\ref{eq:adv_graph_CF}) satisfies $\mu(0,\cdot) = \mu_0$ and $\mu(1,\cdot) = \mu_1$.
\end{theorem}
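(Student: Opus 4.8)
The plan is to reduce the statement to Theorem~\ref{thm:g0} by regarding the drift term as a known, state-dependent forcing that is cancelled by the control vector fields. Writing Eq.~(\ref{eq:adv_graph_CF}) in the bilinear matrix form used in the proof of Theorem~\ref{thm:g0}, its right-hand side splits as $L_0(t)\mu + \sum_{s,i,e} A^s_i(e)U^s_i(t,e)B^{si}_e\mu$, where $L_0(t)$ is the time-varying drift generator assembled from the rates $A_0(t,e)$ on the edges of $\mathcal{G}_0$. The first fact to record is that $L_0(t)$ is conservative: summing the drift contribution in Eq.~(\ref{eq:adv_graph_CF}) over all vertices shows total mass is preserved, so $\mathbf{1}^\intercal L_0(t) = \mathbf{0}^\intercal$ and $L_0(t)\mu \in T_\mu(\mathcal{P}(\mathcal{V}))$ for every $\mu$. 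Thus the drift injects a purely tangent velocity that must be absorbed, rather than a net source or sink of mass.

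As in Theorem~\ref{thm:g0}, I would fix a once-differentiable curve $\gamma:[0,1]\to int(\mathcal{P}(\mathcal{V}))$ with $\gamma(0)=\mu_0$ and $\gamma(1)=\mu_1$, which exists by convexity of the interior, and aim to choose controls so that $\mu(t,\cdot)=\gamma(t)$ solves Eq.~(\ref{eq:adv_graph_CF}). Substituting $\mu=\gamma$ reduces this to the pointwise requirement
\begin{equation}
\frac{d}{dt}\gamma(t) - L_0(t)\gamma(t) = \sum_{s \in \lbrace +,-\rbrace}\sum_{i=1}^n\sum_{e=(v\rightarrow w)} A^s_i(e)U^s_i(t,e)\,B^{si}_e\gamma(t).
\end{equation}
The left-hand side is a genuine element of $T_{\gamma(t)}(\mathcal{P}(\mathcal{V}))$, since both $\dot\gamma$ and $L_0(t)\gamma(t)$ are tangent. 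Using $B^{si}_e\gamma(t)=\gamma(t,v)B^{si}_e\mathbf{1}$ and setting $\tilde U^s_i(t,e)=A^s_i(e)U^s_i(t,e)\gamma(t,v)$, this becomes $\dot\gamma - L_0\gamma = \sum \tilde U^s_i(t,e)B^{si}_e\mathbf{1}$. Because $\mathcal{G}_c=\mathcal{G}$ is strongly connected and symmetric, the spanning argument of Theorem~\ref{thm:g0} shows that $\{B^{si}_e\mathbf{1}\}$ spans $T_x(\mathcal{P}(\mathcal{V}))$ at each $x\in int(\mathcal{P}(\mathcal{V}))$, so continuous coefficients $\tilde U^s_i(t,e)$ realizing this drift-compensated velocity exist.

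The remaining step is to recover admissible controls. Exactly as in Theorem~\ref{thm:g0}, I would invert $\tilde U^s_i(t,e)=A^s_i(e)U^s_i(t,e)\gamma(t,v)$ along the chosen spanning edges, routing positive coefficients through $e$ and negative coefficients through the reverse edge $\bar e$ (available since $\mathcal{G}_c$ is symmetric, with $A^s_i(\bar e)\neq 0$), and setting the remaining $U^s_i$ to zero. Since $\gamma$ stays in the interior, $\gamma(t,v)>0$ and each division is well-defined, and the resulting $U^s_i(t,\cdot)\geq 0$ inherit piecewise continuity from $\gamma$, $L_0$ and the $\tilde U^s_i$. The hypothesis $\mathcal{G}_0\subseteq\mathcal{G}_c$ guarantees that the drift acts only on edges that also carry controllable transitions, so the compensation is never forced onto an edge outside the control graph.

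The main obstacle I anticipate is not conceptual but lies in the bookkeeping of the reduction: verifying carefully that $L_0(t)\gamma(t)$ is genuinely tangent (conservation of mass under the drift generator), and confirming that the sign-splitting between $e$ and $\bar e$ still yields nonnegative, finite controls once the fixed, non-reversible drift flux is present. Unlike the driftless case, the drift imposes a baseline directional flux that cannot itself be reversed, so the argument must lean on strong connectivity of $\mathcal{G}_c$ to supply, via the control edges, enough flexibility to both cancel $L_0\gamma$ and track $\dot\gamma$ throughout $[0,1]$.
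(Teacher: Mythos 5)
Your proposal is correct and follows essentially the same route as the paper: the paper cancels the drift with a control component $\tilde{U}^s_i$ (possible precisely because $\mathcal{G}_0 \subseteq \mathcal{G}_c$ and $\mathcal{G}$ is symmetric) and superimposes the driftless transport of Theorem \ref{thm:g0} as $U^s_i = \hat{U}^s_i + \tilde{U}^s_i$, which is exactly your combined tracking equation $\dot\gamma - L_0\gamma = \sum \tilde{U}^s_i(t,e)B^{si}_e\mathbf{1}$ solved along the same curve $\gamma$. Your version merely unrolls the spanning argument of Theorem \ref{thm:g0} instead of citing it as a black box, making explicit the bilinear bookkeeping (evaluating the drift-cancelling fluxes along $\gamma$) that the paper's two-line proof leaves implicit.
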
}

\begin{proof}
The graph $\mathcal{G}_c$ is connected. Since $\mathcal{G}_0 \subseteq \mathcal{G}$, we can choose $\tilde{U}^s_i(t,\cdot)$ such that the right hand side in Eq. (\ref{eq:adv_graph_CF}) is equal to $0$ for all $t \in [0,1]$. Then, from the previous theorem, it follows that there exists a control $U^s_i(t,\cdot)$, of the form $ U^s_i(t,\cdot) = \hat{U}^s_i(t,\cdot)+\tilde{U}^s_i(t,\cdot)$ such that Eq. (\ref{eq:adv_graph_CF}) satisfies $\mu(0,\cdot) = \mu_0$ and $\mu(1,\cdot) = \mu_1$. Here, the parameters $\tilde{U}_i(t,\cdot)$ negate the effect of the drift field $A_0$, and $\hat{U}^s_i(t,\cdot)$ ensure the density $\mu_0$ is transported to $\mu_1$ as in Thm \ref{thm:g0}.
\end{proof}

\subsection{Construction of Approximate Feedback Control Laws}

Given the solution the optimal transport problem on the graph, we reconstruct the corresponding approximate feedback control laws $\lbrace u_i(x,t) \rbrace$ for the underlying dynamical system Eq. (\ref{eq:ctrl_af}). \PGn{Since the optimal transport problem is solved on the graph, the feedback control law is vertex-based. For any vertex $v$ of the graph $\mathcal{G}$, all agents with their state $x$ lying in the sub-volume $B_v$, apply the following feedback law:}

\begin{equation}\label{eq:feedback}
u_i(x,t) = \frac{\sum_{w \in \mathcal{N}^+_i(v)} U^+_i(v \rightarrow w,t)} {|\mathcal{N}^+_i(v)|} - \frac{\sum_{w \in \mathcal{N}^-_i(v)} U^-_i(v \rightarrow w,t)} {|\mathcal{N}^-_i(v)|} \hspace{2mm} \forall x \in B_v.
\end{equation}

Here, $\mathcal{N}^s_i(v)$ refers to the the neighboring vertices of $v$ in the graph $(\mathcal{V},\mathcal{E}^s_i)$ for each $s \in \lbrace +,- \rbrace$ and $i \in \lbrace 1,2....n\rbrace$. \subsection{Numerical Implementation}

\PG{We adapt the numerical scheme used in Refs. \cite{papadakis2014optimal,solomon2016continuous} to our setting, and use a staggered discretization scheme for pseudo-time discretization. }We define 
\begin{align}
\mu_{j}(v)\triangleq \mu(t_j,v),\\
J^{s}_{i,j}(e)\triangleq J_i^s(t_j,e),
\end{align} 
where $t_j=(j/k)t_f, j\in[0,1,2,\dots,k]$ is the time discretization into $k$ intervals. We take $t_0=0$. Here $J^{s}_{i,j}(e)$ represents the $s\in\{+,-\}$ flow due to $g_i(x)$ over edge $e=(v\rightarrow w)$, from vertex $v$ at time $t_{j}$ to vertex $w$ at time $t_{j+1}$. 

Hence, the optimization problem given in Eqs. (\ref{eq:convex_OT_G}) can discretized as,

\begin{align}
\tilde{W}(\mu_0,\mu_1)^2=\inf_{J^{s}_{i,j}\geq 0,\mu_{j}\geq 0}
\sum_{s\in\{+,-\}}\sum_{i=1}^n\sum_{j=1}^k\sum_{\substack{e=1\\e=(v\rightarrow w)}}^{|\mathcal{E}_i^s|}(J^{s}_{i,j}(e))^2 (\frac{1}{\mu_{j}(v)}+\frac{1}{\mu_{j+1}(w)})\label{eq:cost_disc1},
 \end{align}
subject to the following constraints:
\begin{align}
\frac{\mu_{j+1}-\mu_{j}}{\Delta t}= A_0(t_j)\mu_j+\sum_{s\in\{+,-\}}\sum_{i=1}^n (D_i^s)^\intercal J^{i,j}_s,\label{eq:OT_disc1}\\
\mu_{0}=\mu_{t_0}, \mu_{k}=\mu_{t_f}\label{eq:bc_constraint1},
\end{align}
where we have used the vertex-based $m\times m$ transition rate matrix $A_0(t_j)$ as originally defined in Eq. (\ref{eq:gen1}).
Here $\Delta t=\dfrac{t_f}{k}$. The cost function given by Eq. (\ref{eq:cost_disc1}) is again of the form `quadratic over linear', and the advection (Eq. (\ref{eq:OT_disc1})) imposes linear constraints. Hence the discretized problem is convex, and can be solved using many off-the-shelf convex solvers. The optimization problem is solved via CVX \cite{grant2008cvx} modeling platform, an open-source software for converting convex optimization problems into usable format for various solvers. We use the SCS \cite{o2013operator} solver, a first-order solver for large size convex optimization problems. This solver uses the Alternating Direction Method of Multipliers (ADMM) \cite{eckstein2012augmented} to enable quick solution of very large convex optimization problems, with moderate accuracy. 

The variables to be solved for in the optimization problem Eqs. (\ref{eq:cost_disc1}-\ref{eq:bc_constraint1}) are vertex based quantities $\mu_{j}$, and edge based quantities $J^s_{i,j}$. The size of the optimization problem can be quantified in terms of number of time-discretization steps $k$, number of vertices $|\mathcal{V}|=m$, and the number of edges $|\mathcal{E}_c|$. The graph $\mathcal{G}_c$ is always sparse, since a typical vertex is at most connected to $2(n+1)d$ neighbors, and $m\gg n,m\gg d$. Hence, the variables in the optimization problem scale as $O(k(m+|\mathcal{E}|))=O(n\cdot d\cdot k\cdot m)$. \PGn{The computations are performed on a workstation with an Intel Xeon X$5690$ \cite{intel} processor.

In the examples that follow, the graph size $m$ is chosen to be large enough so that the qualitative features of the optimal transport are well resolved, and do not change upon finer grid refinement. The time-discretization parameter $k$ is chosen such that the optimal transport cost $\tilde{W}$ is insensitive to finer discretization.}

\section{Examples}\label{sec:Examples}

\subsection{Optimal Transport in the Grushin Plane}\label{subsec:Gr}
We first apply our framework to a non-holonomic control-affine system in which certain optimal transport solutions can be found analytically. We consider transport of measure in the Grushin system. In Ref. \cite{agrachev2009optimal}, the structure of optimal controls in this problem was analyzed. Using this structure, optimal transport to a delta measure at $(0,0)$ was computed. The system is described by
\begin{subequations}
\begin{align}
\dot{x}_1=u_1,\\
\dot{x}_2=u_2x_1.
\end{align}
\end{subequations}

\PGn{This system is a driftless system with control vector fields $g_1(x_1,x_2)=[1 \; 0]^\intercal$,  $g_2(x_1,x_2)=[0 \;x_1]^\intercal$. These do not span the tangent space $\mathbb{R}^2$, but their Lie-algebra does, i.e. $Lie_x \bigg \lbrace g_i: i \in \lbrace 1,2 \rbrace \bigg \rbrace = \mathbb{R}^2$. This can be seen by noting that the Lie-bracket $[g_1,g_2]=[0 \;1]^{\intercal}$, and hence $span \{[g_1,g_2],g_1\}=\mathbb{R}^2$. Hence, this system satisfies condition $1$ of Theorem \ref{thm:strngcon}. By Theorem \ref{thm:g0}, the corresponding optimal transport problem for this driftless system is well-posed. }

The optimal control cost $c(x,y)$ between initial and final states, $x=(x_1,x_2)^\intercal,y=(y_1,y_2)^\intercal$, is taken to be square of the subriemannian distance $d(x,y)=\inf_{\mathbb{U}_x^y}\int_0^1\sqrt{u_1^2+u_2^2}dt$. Hence, the optimal control solutions are also geodesics in the subriemannian space. The solutions of the optimal control problem are integral curves of the Hamiltonian $H$ given by
\begin{align}
H(x_1,x_2,p_1,p_2)=\frac{1}{2}(p_1^2+x_1^2p_2^2). \label{eq:H_Gr}
\end{align}
Here $p_1,p_2$ are the co-state variables.
Note that since $H$ is independent of $x_2$, $H$ can be reduced to a Hamiltonian in $(x_1,p_1)$, and the integral curves of $H$ can be obtained using quadratures.  The geodesics reaching $(0,\alpha)$ at $t=1$ are of the form
\begin{align}
x_1(t)=\frac{a}{b}sin(b(1-t)),\label{eq:Geo_Gr1}\\
x_2(t)=\frac{a^2}{4b^2}(2b(1-t)-sin(2b(1-t)))+\alpha.\label{eq:Geo_Gr2}
\end{align}

A geodesic between a specified initial point $(\bar{x}_1,\bar{x}_2)$, and $(0,\alpha)$ can be obtained by inverting the Eqs. (\ref{eq:Geo_Gr1}-\ref{eq:Geo_Gr2}) at $t=0$ to solve for $(a,b)$. For $t\leq\frac{\pi}{b}$, these geodesics are also global minimizers of the optimal control problem. Figure \ref{fig:Geo_Gr}(a) shows some geodesics to the origin. 

Now consider the optimal transport problem with $c(x,y)=d^2$ from an initial measure $\mu_0$ to final measure $\mu_{1}=\delta_{(0,0)}$. Clearly, the optimal map $T$ is $x\rightarrow (0,0)$, and the corresponding flow is given by the geodesics between each $x\in \:supp(\mu_0)$ and $(0,0)$. See Fig. \ref{fig:Geo_Gr}(b) for analytically computed transport in the case in which the initial measure is uniform over a disk.
\begin{figure}[h]
\centering
\subfloat[]{\includegraphics[width=2.25in]{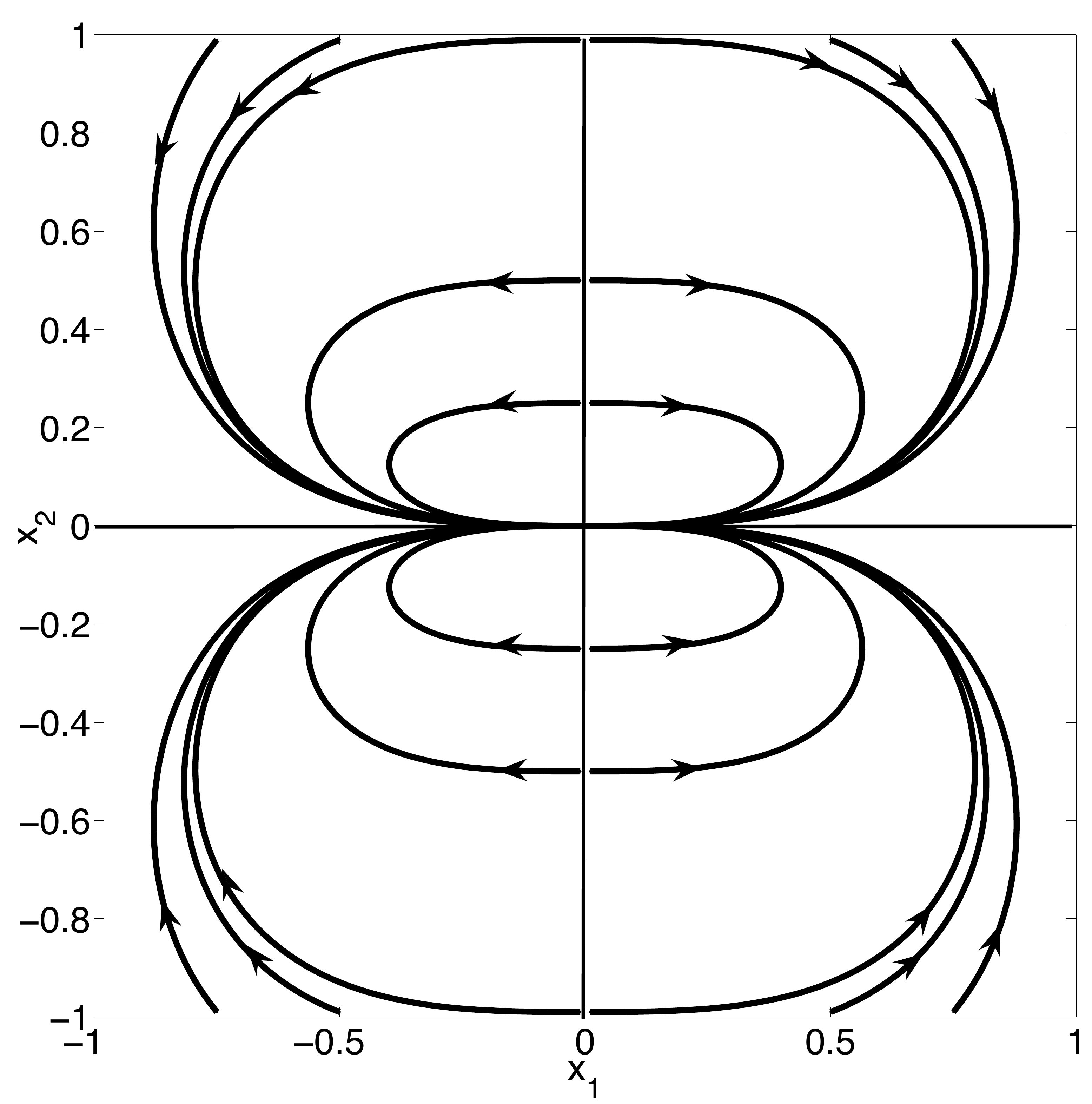}}
\subfloat[]{\includegraphics[width=5in]{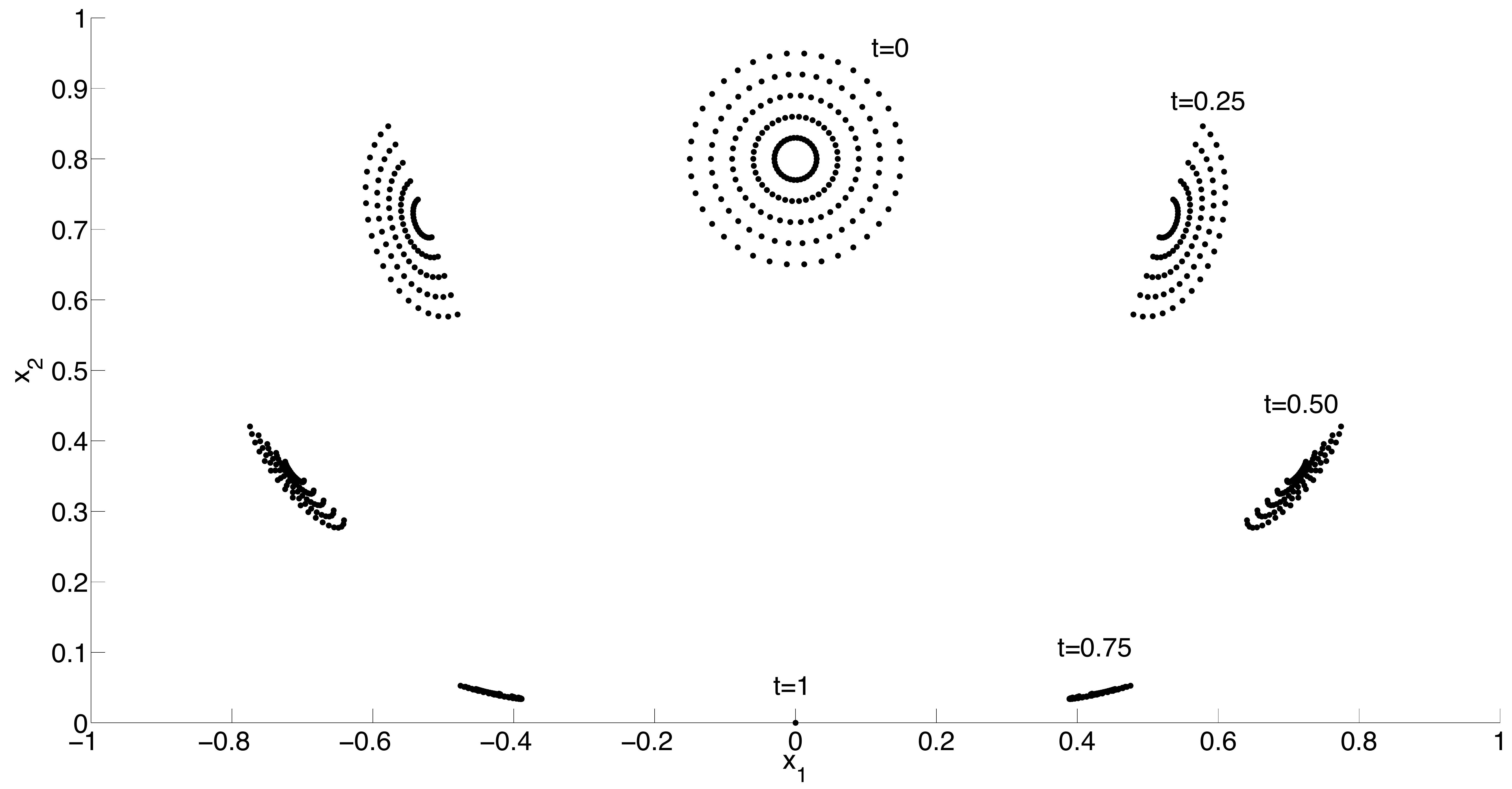}}
\caption{\footnotesize{(a) Some minimizing geodesics to the origin in the Grushin plane. (b) Analytically computed optimal transport solution between a uniform measure whose support is the disk $\Omega=\{(x,y)|x^2+(y-.8)^2<.15^2\}$, and a measure concentrated at the origin.}}
\label{fig:Geo_Gr}
\end{figure}
Using the algorithm developed in Section \ref{sec:Algo}, we compute optimal transport for this same case. We divide the $X=[-1,1]\times[-1, 1]$ into $m=100^2$ boxes, and form the corresponding graph $\mathcal{G}$.  The resulting solution is shown in Figure \ref{fig:OT_Gr}(a)-(e). The convergence of optimal transport cost $\tilde{W}$ with $m$, and  $k$ (i.e., the time-discretization) is shown in Fig \ref{fig:OT_Gr}(f). \PGn{The $k=75$ case takes about $2\times 10^4$ seconds of computation time.} It can be seen that the computed solution closely follows the analytical solution shown in Fig. \ref{fig:Geo_Gr}.

\begin{figure}[h!]
\centering
\subfloat[t=0]{\includegraphics[width=3in]{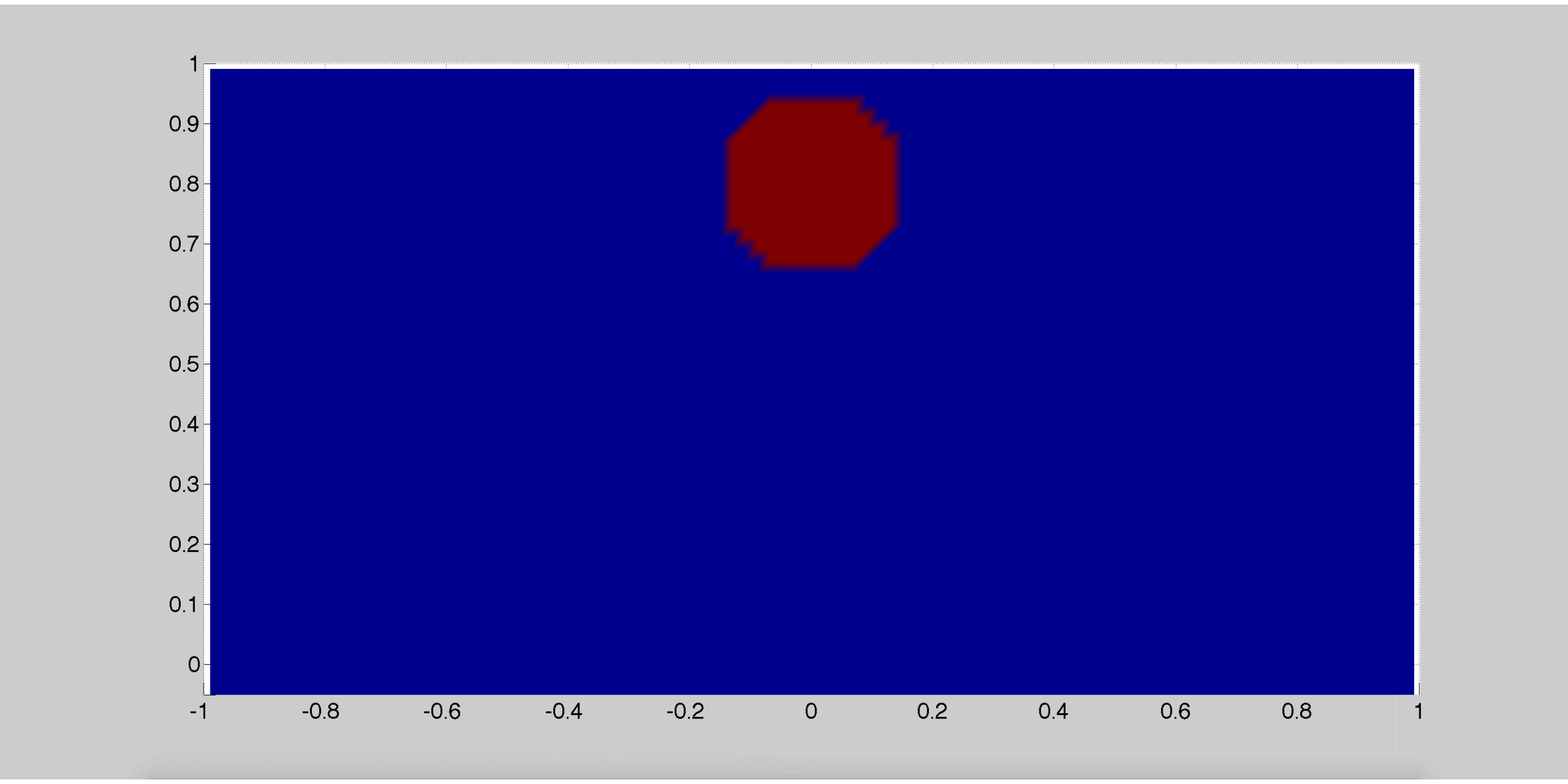}}\qquad
\subfloat[t=0.25]{\includegraphics[width=3in]{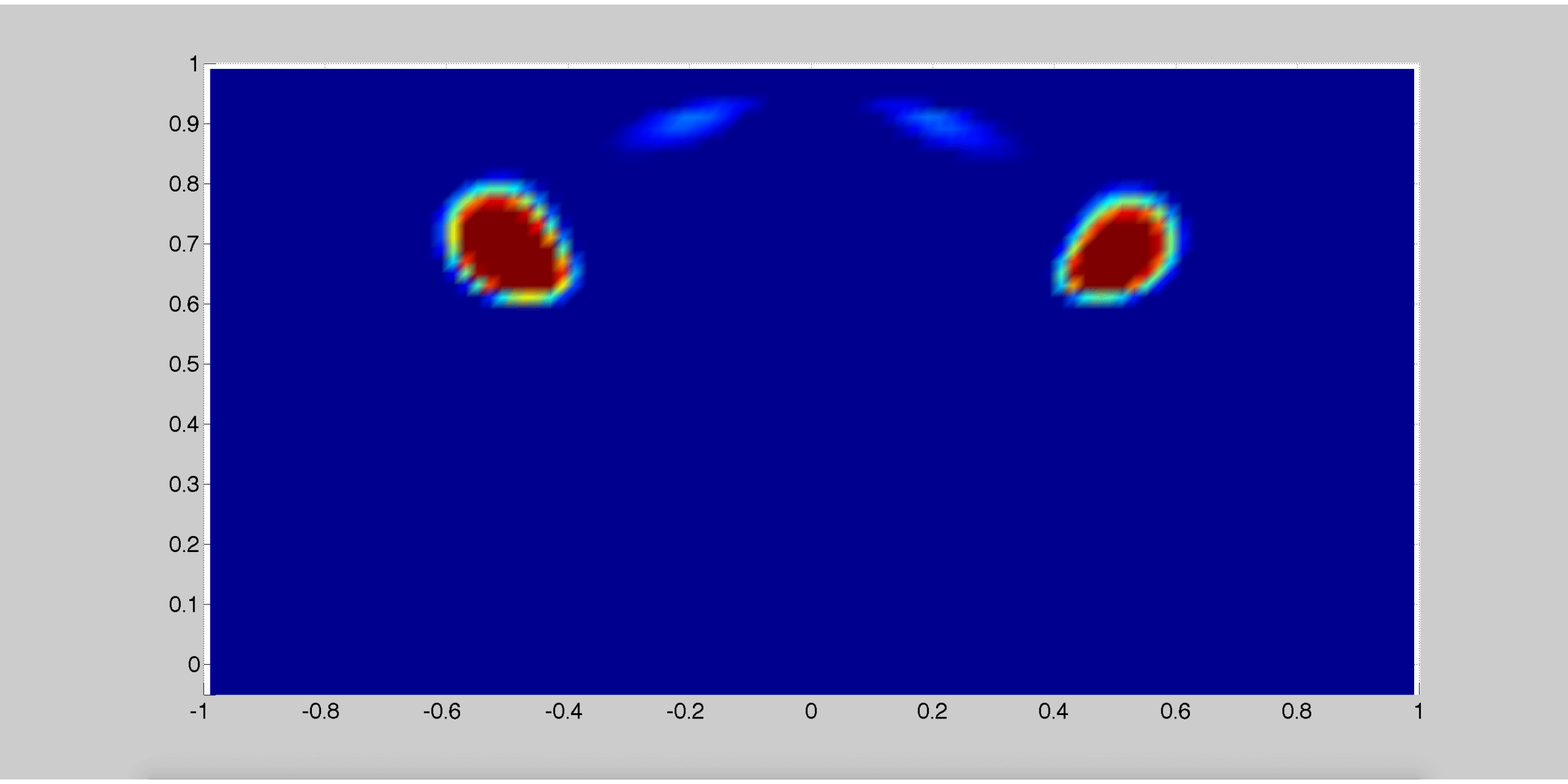}}\qquad
\subfloat[t=0.5]{\includegraphics[width=3in]{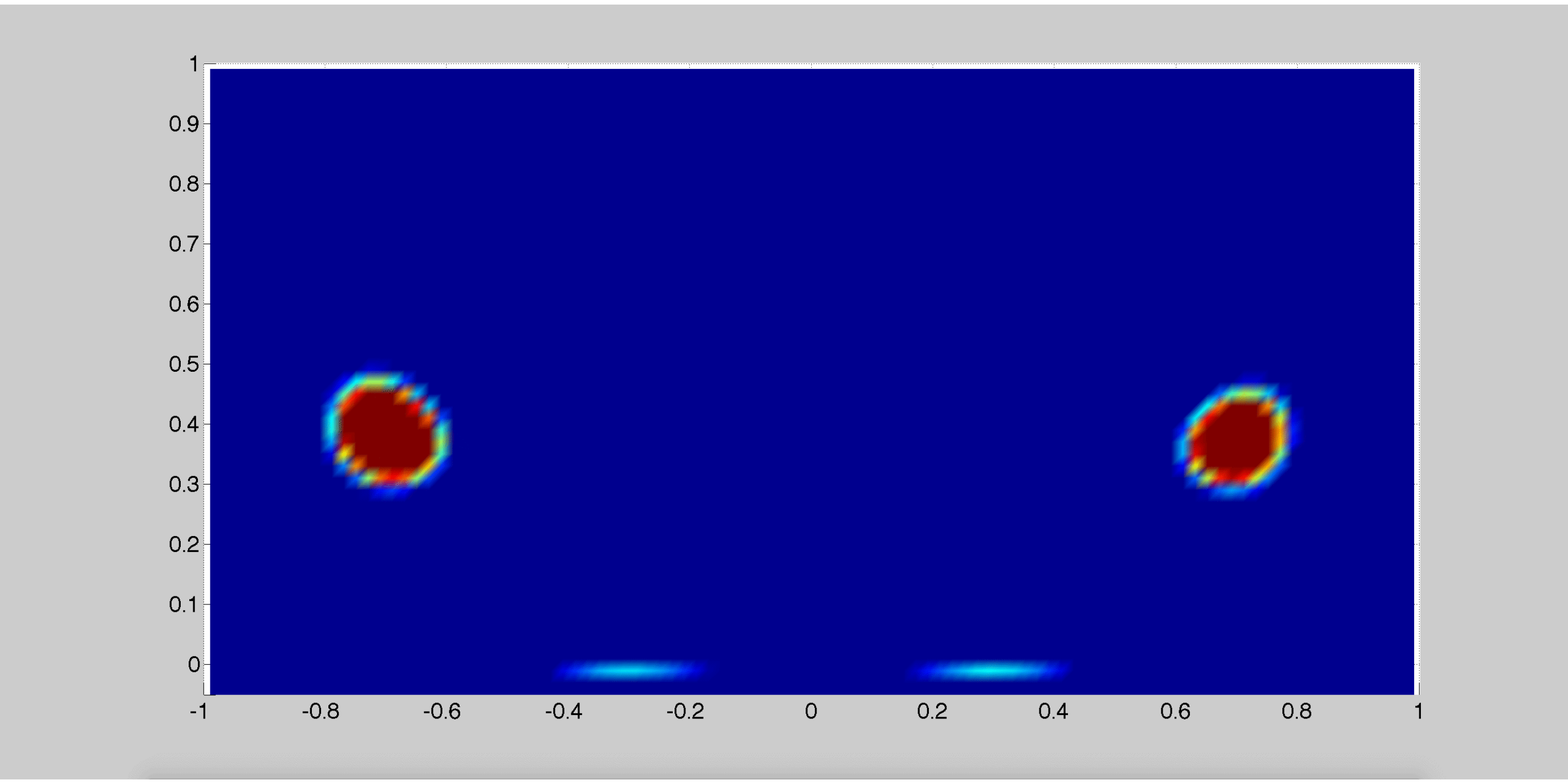}}\qquad
\subfloat[t=0.75]{\includegraphics[width=3in]{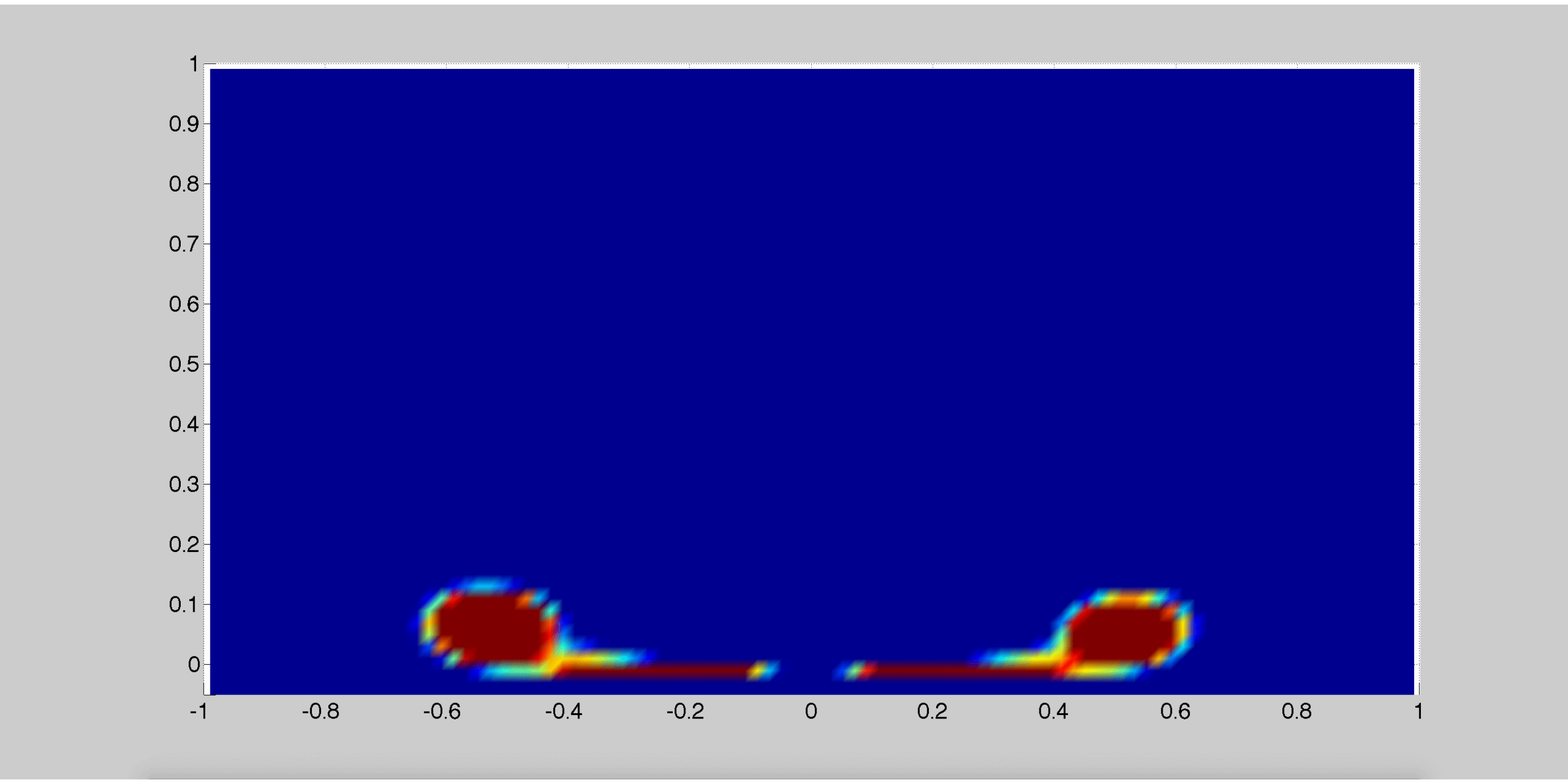}}\qquad
\subfloat[t=0.95]{\includegraphics[width=3in]{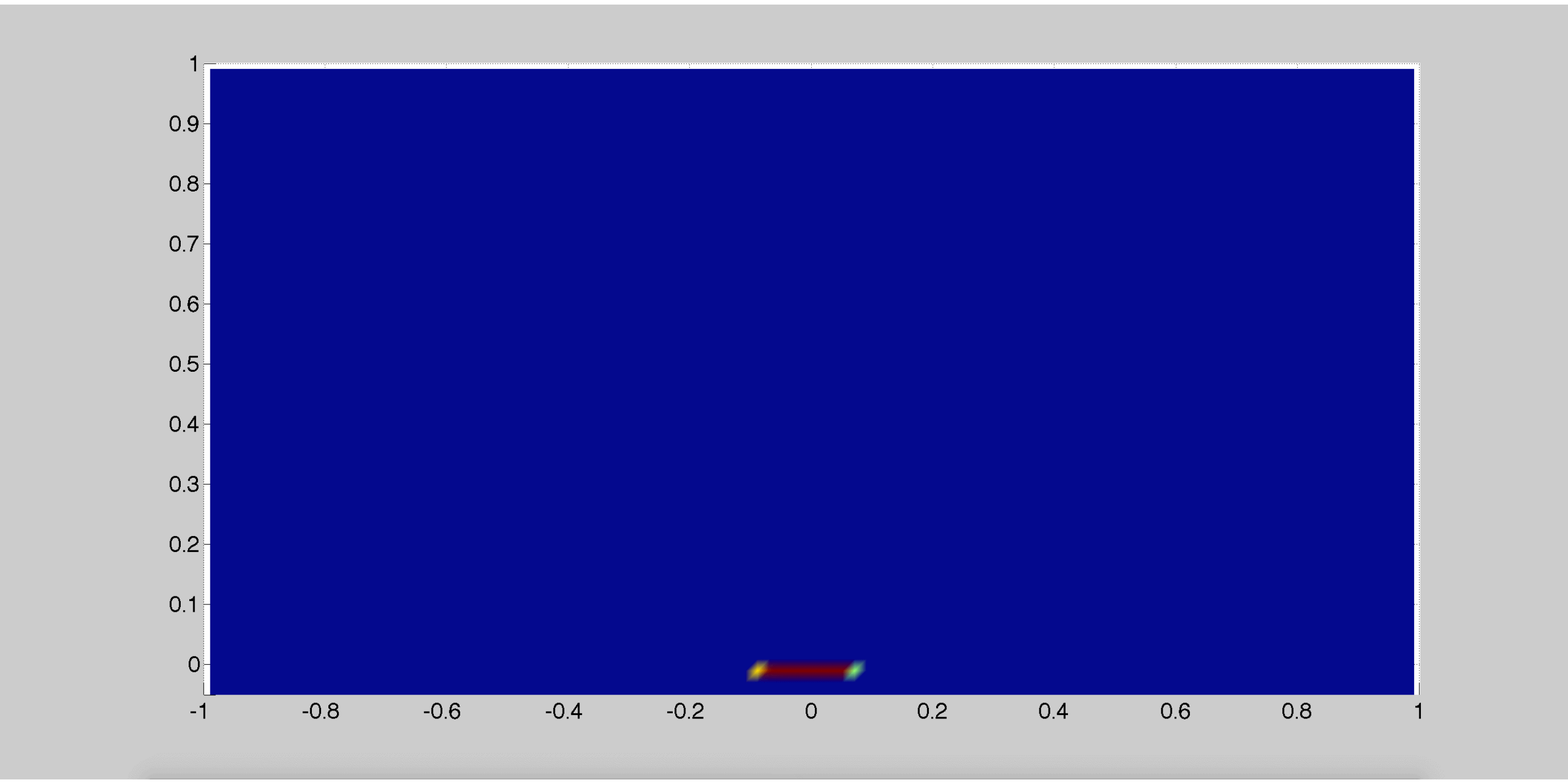}}\qquad
\subfloat[]{\includegraphics[width=3in]{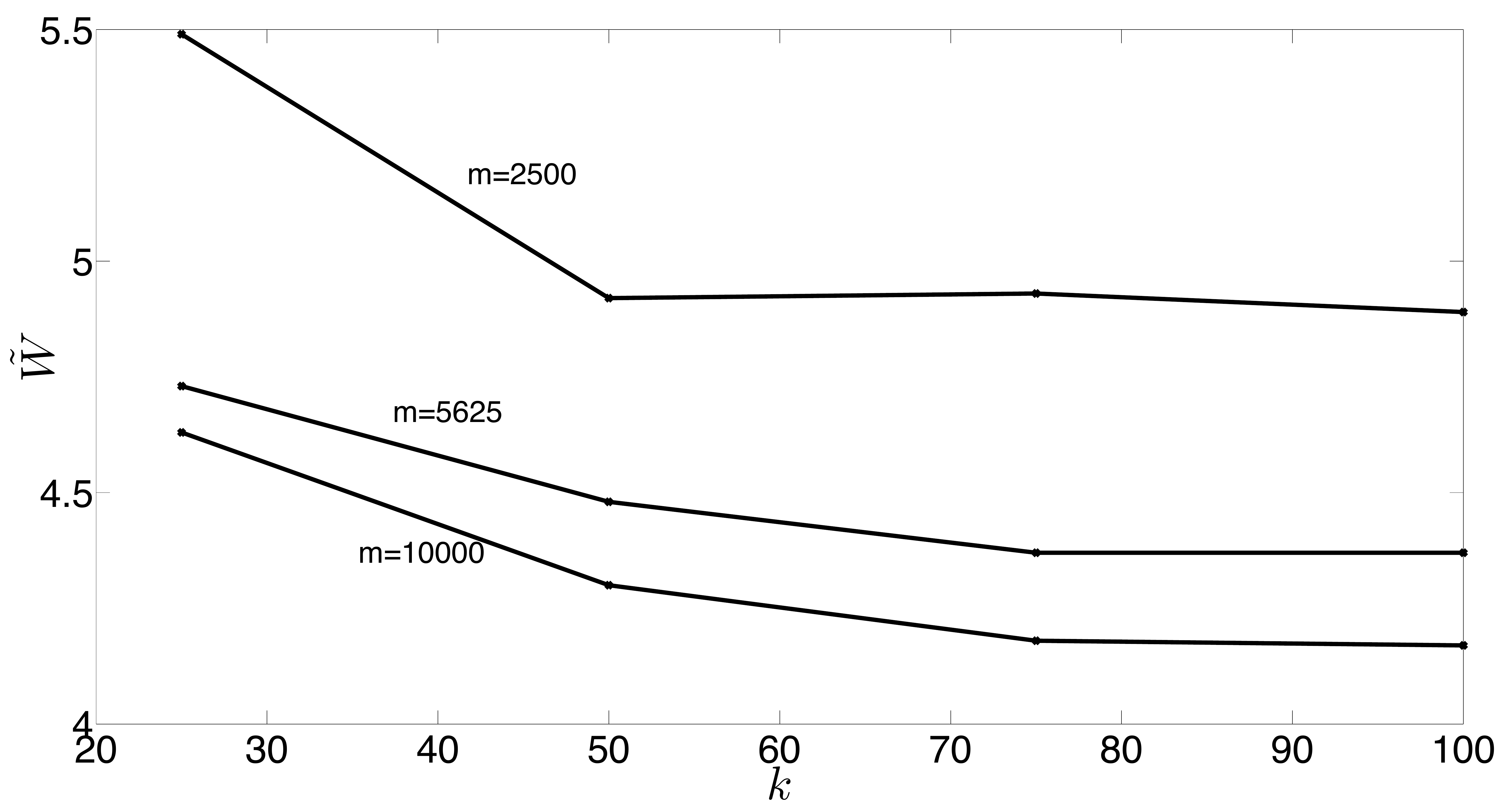}}\qquad
\caption{\footnotesize{ (a)-(e) The optimal transport solution in the Grushin plane using graph based algorithm between a measure whose support is the disk $\Omega=\{(x,y)|x^2+(y-.8)^2<.15^2\}$, and delta measure at the origin. The parameters are $m=10^4,k=75$. (f) Convergence of optimal transport cost with number of time discretization steps $k$ and grid size $m$.}}
\label{fig:OT_Gr}
\end{figure}

\PGn{Next, we perform particle simulations with feedback controls extracted from the optimal transport computation, using Eq. (\ref{eq:feedback}). We take $p=4$ particles per box, and use Eq. (\ref{eq:feedback}) to get state-dependent control commands for each particle.} The results are shown in Figure \ref{fig:particle_Gr}. About $95\%$ of the particles get transported according the optimal transport solution shown in Fig \ref{fig:OT_Gr}, while the rest are dispersed. Note that the control laws obtained from optimal transport solution do not automatically guarantee feedback stabilization of individual particles. 

\begin{figure}[h!]
\centering
\subfloat[t=0]{\includegraphics[width=3in]{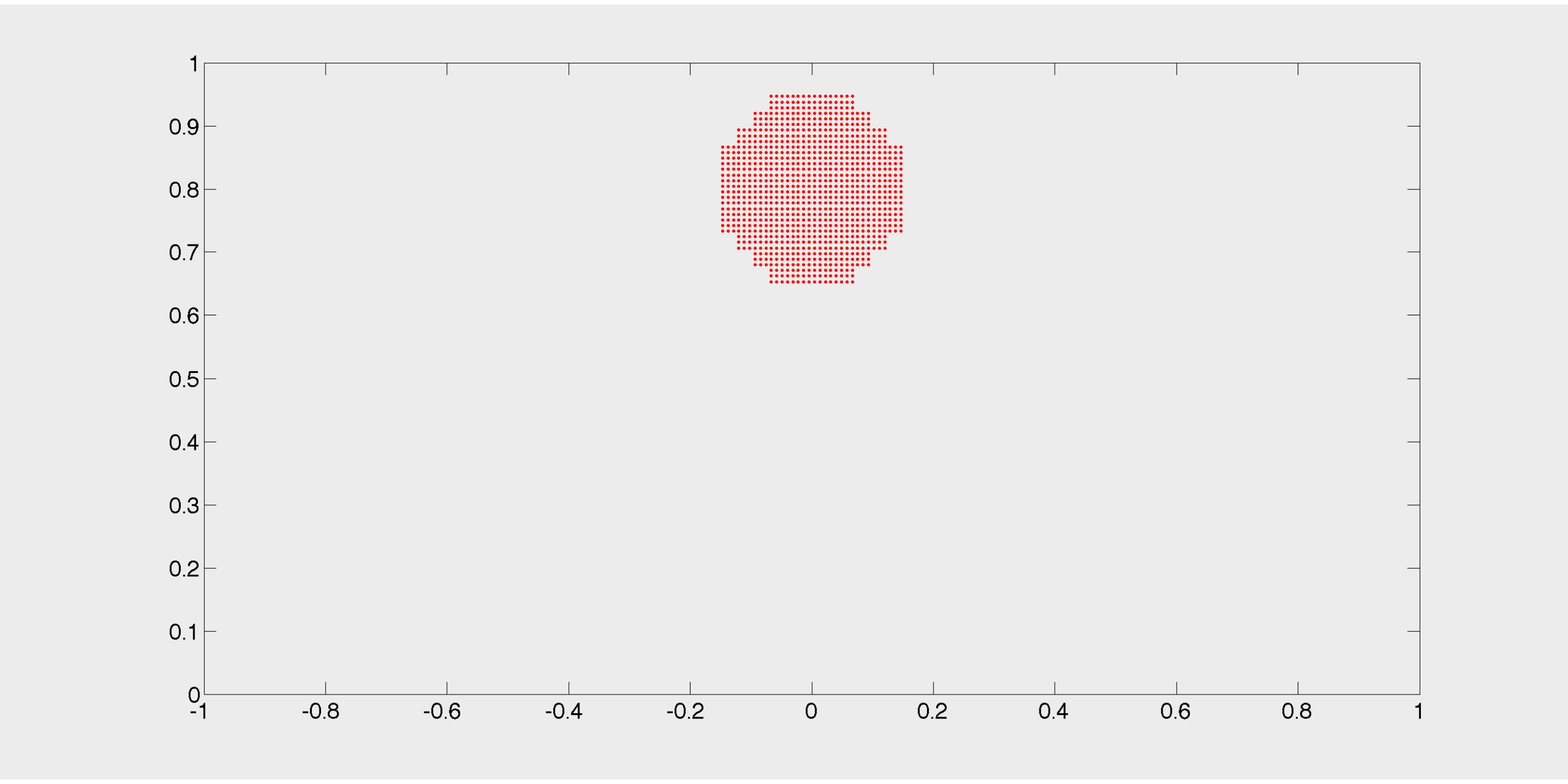}}\qquad
\subfloat[t=0.1]{\includegraphics[width=3in]{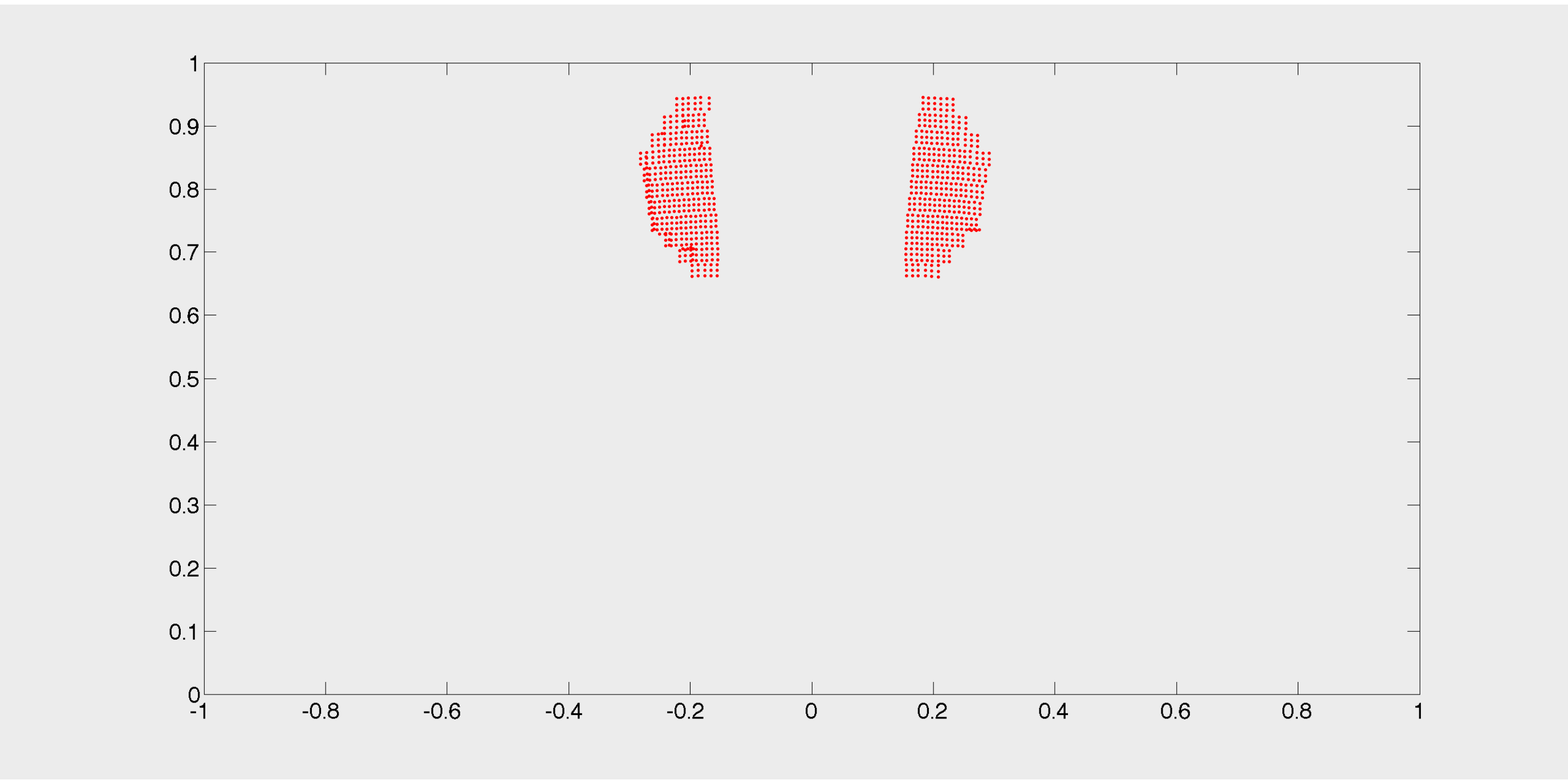}}\qquad
\subfloat[t=0.25]{\includegraphics[width=3in]{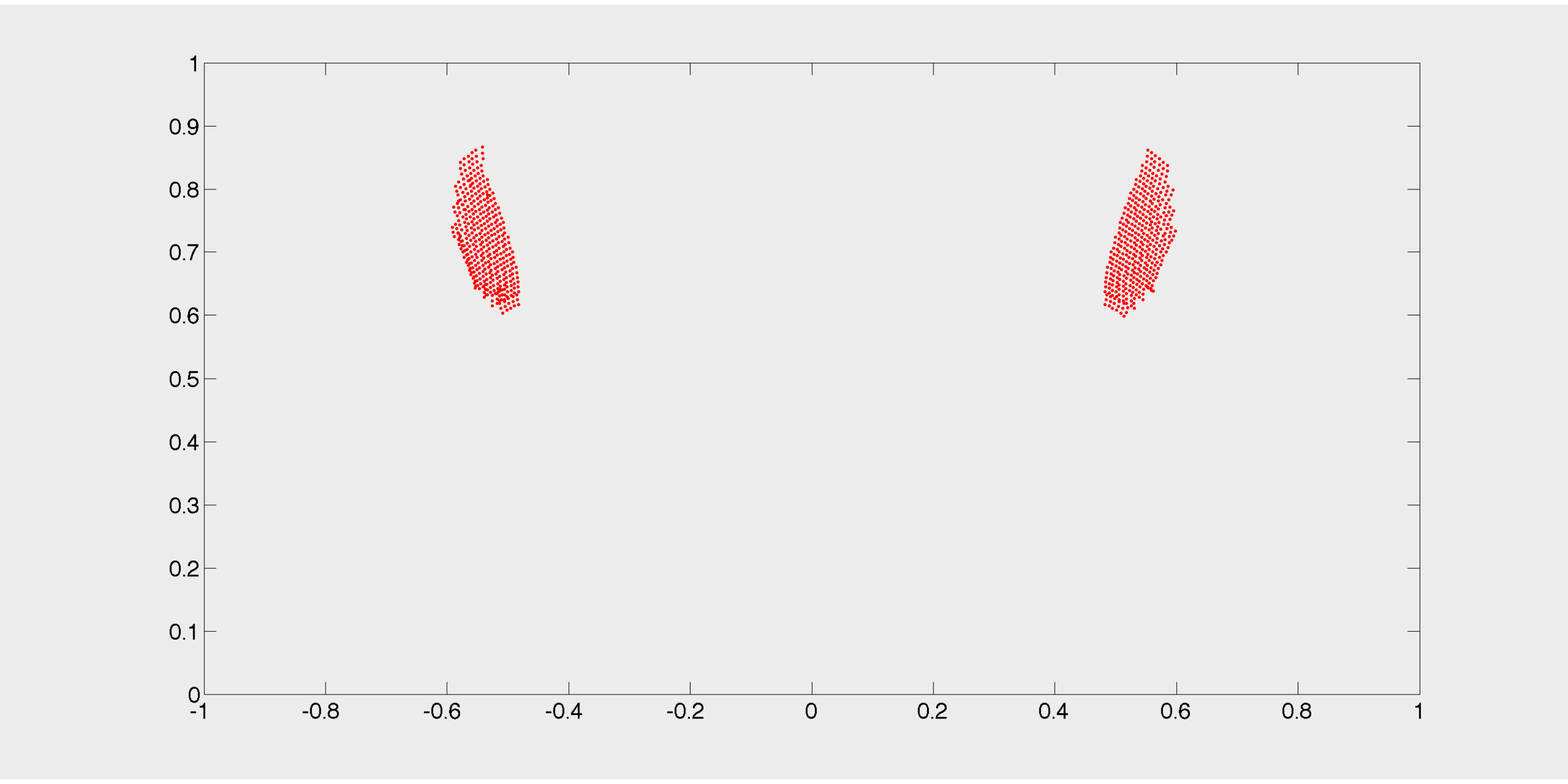}}\qquad
\subfloat[t=0.5]{\includegraphics[width=3in]{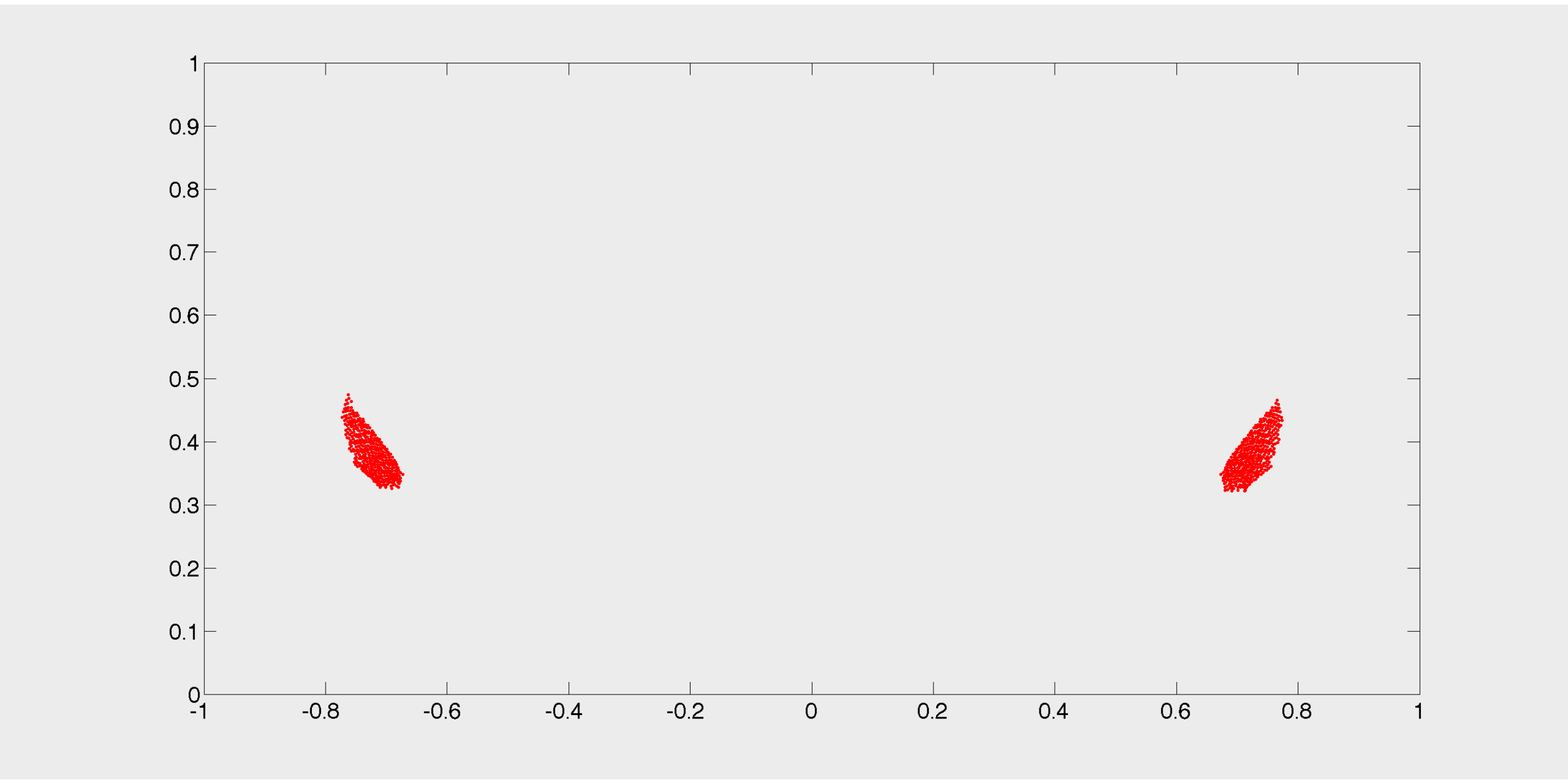}}\qquad
\subfloat[t=0.75]{\includegraphics[width=3in]{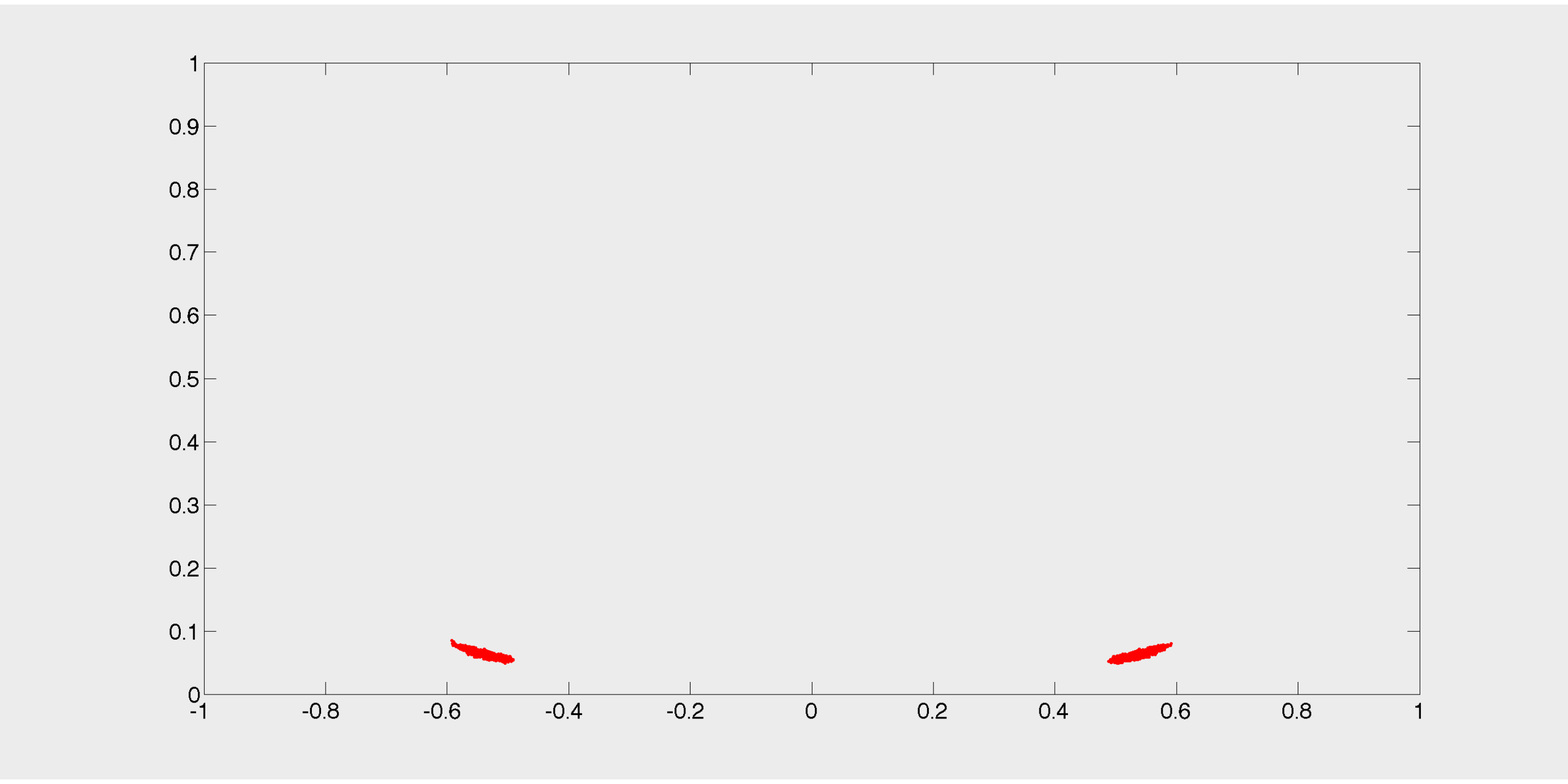}}\qquad
\subfloat[t=0.95]{\includegraphics[width=3in]{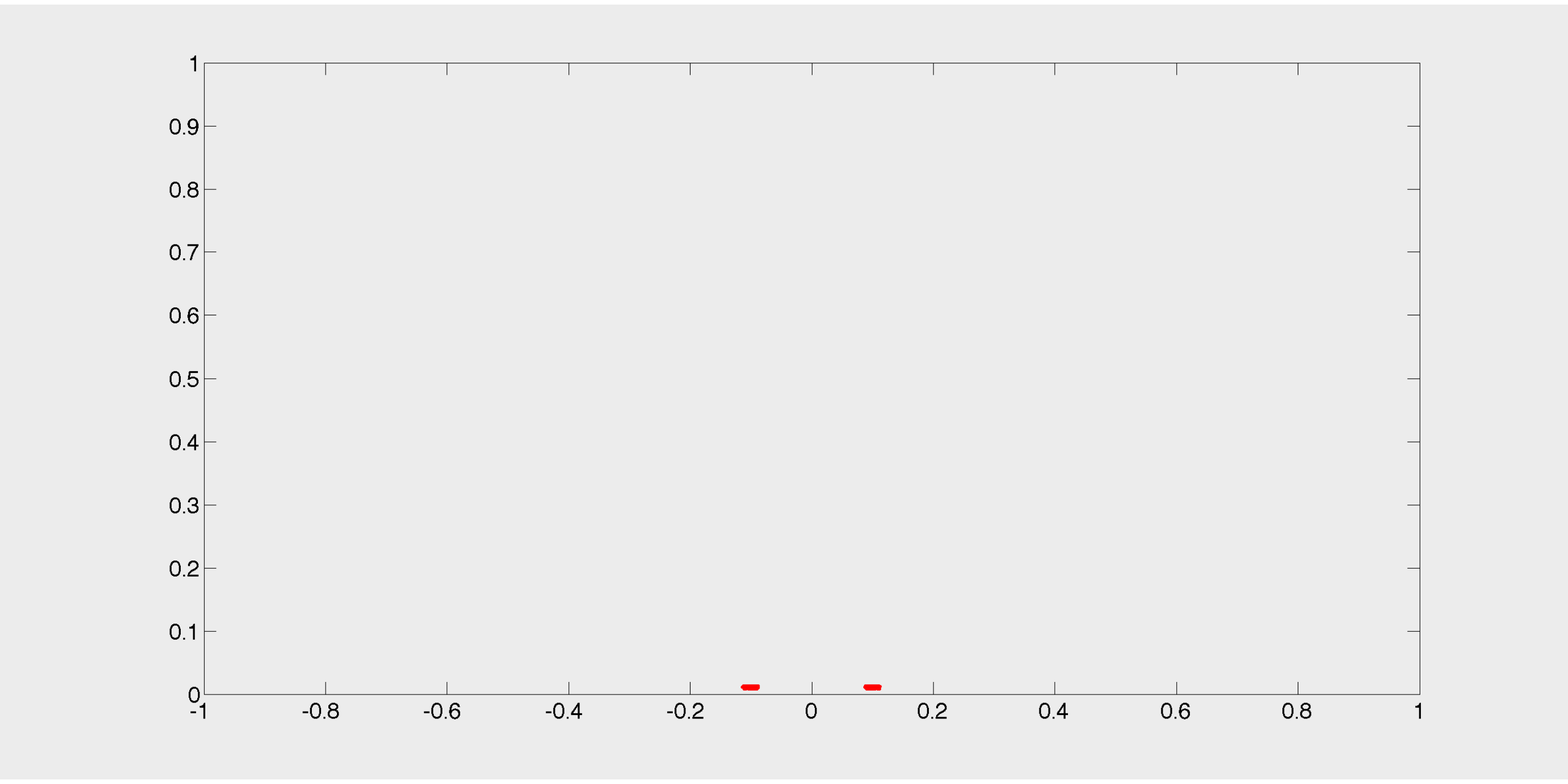}}\qquad
\caption{\footnotesize{Particle trajectories with feedback control computed using Eq. (\ref{eq:feedback}) from the optimal transport solution in the Grushin plane. Each box contained in the support of uniform initial measure $\mu_0$ is initially populated with 4 particles.}}
\label{fig:particle_Gr}
\end{figure}

\subsection{Optimal Transport in Time-Periodic Double-Gyre system}
Now we consider a measure transport problem for the time-periodic double-gyre system \cite{lekien2007chaotic}. This chaotic dynamical system has been analyzed using several computational tools related to transport and mixing \cite{poje1999geometry, wiggins2005dynamical, FrPa09, tallapragada2013set, froyland2015optimal}. 
The controlled equations we consider as follows,
\begin{subequations}
\begin{align}
\dot{x}=-\pi A\sin(\pi f(x,t))\cos(\pi y) + u_1,\label{eq:DG1}\\
\dot{y}=\pi A\cos(\pi f(x,t))\sin(\pi y)\dfrac{df(x,t)}{dx}+u_2\label{eq:DG2},
\end{align}
\end{subequations}

where $f(x,t)=\beta\sin(\omega t)x^2+(1-2\beta\sin(\omega t))x$ is the time-periodic forcing in the system. \PGn{Since the above system with drift has control vector fields $g_1(x,y)=[1 \:\:0]^\intercal$, and $g_2(x,y)=[0 \:\:1]^\intercal$, that span the tangent space $\mathbb{R}^2$, it satisfies condition $2$ of Theorem \ref{thm:strngcon}. By Theorem \ref{drfl_ctrb}, the corresponding optimal transport problem for this system with drift is well-posed, since there exists a transport between any pair of measures.}

The phase space is $X=[0\: \: 2]\times[0\: \: 1]$. We first describe the dynamics of the uncontrolled ($u_1=u_2=0$) system. For the trivial case of $\beta=0$ (i.e. no time-dependent forcing), the phase space is divided into two invariant sets, i.e., the left and right halves of the rectangular phase space (`gyres'), by a heteroclinic connection between fixed points $x_1=(1,1)$ and $x_2=(0,1)$. For non-zero $\beta$, the Poincare map $F$ of the system, obtained by integrating the dynamics over one time period $\tau$ of $f$, describes an autonomous discrete-time system. The heteroclinic connection is broken in this case, and results in a heteroclinic tangle. This heteroclinic tangle leads to transport between left and right sides via lobe-dynamics. 

We choose parameters $A=0.25, \beta=0.25, \omega=2\pi$, such that the time-period of the flow is $\tau=1$. To get insight into the phase space transport due to heteroclinic tangles, the theory of lobe dynamics \cite{lekien2007chaotic} is useful. Lobe dynamics techniques allows one to quantify the transport between sets separated by invariant manifolds, and their transversal intersections. In figure \ref{fig:DG_manif}, the unstable manifold of $x_1\approx(0.919,1)$, $U_{x_1}$, and the stable manifold of $x_2\approx(1.081,0)$, $S_{x_2}$ are shown in green and white respectively. The lobe  labeled 'A', its pre-image $F^{-1}(A)$ and image $F(A)$ are also shown. Consider the segment $L=  U_{x_1}(x_1\rightarrow F(P_1)) \cup S_{x_2}(F(P_1)\rightarrow x_2)$. Then $L$ divides phase space $X$ into two regions. The points that get mapped from left to right region in one iteration of $F$ are precisely those in set $A$. Hence, the amount of mass transport from left to right side of $L$ is $\bar{m}(A)$.
\begin{figure}[h]
\includegraphics[width=6in]{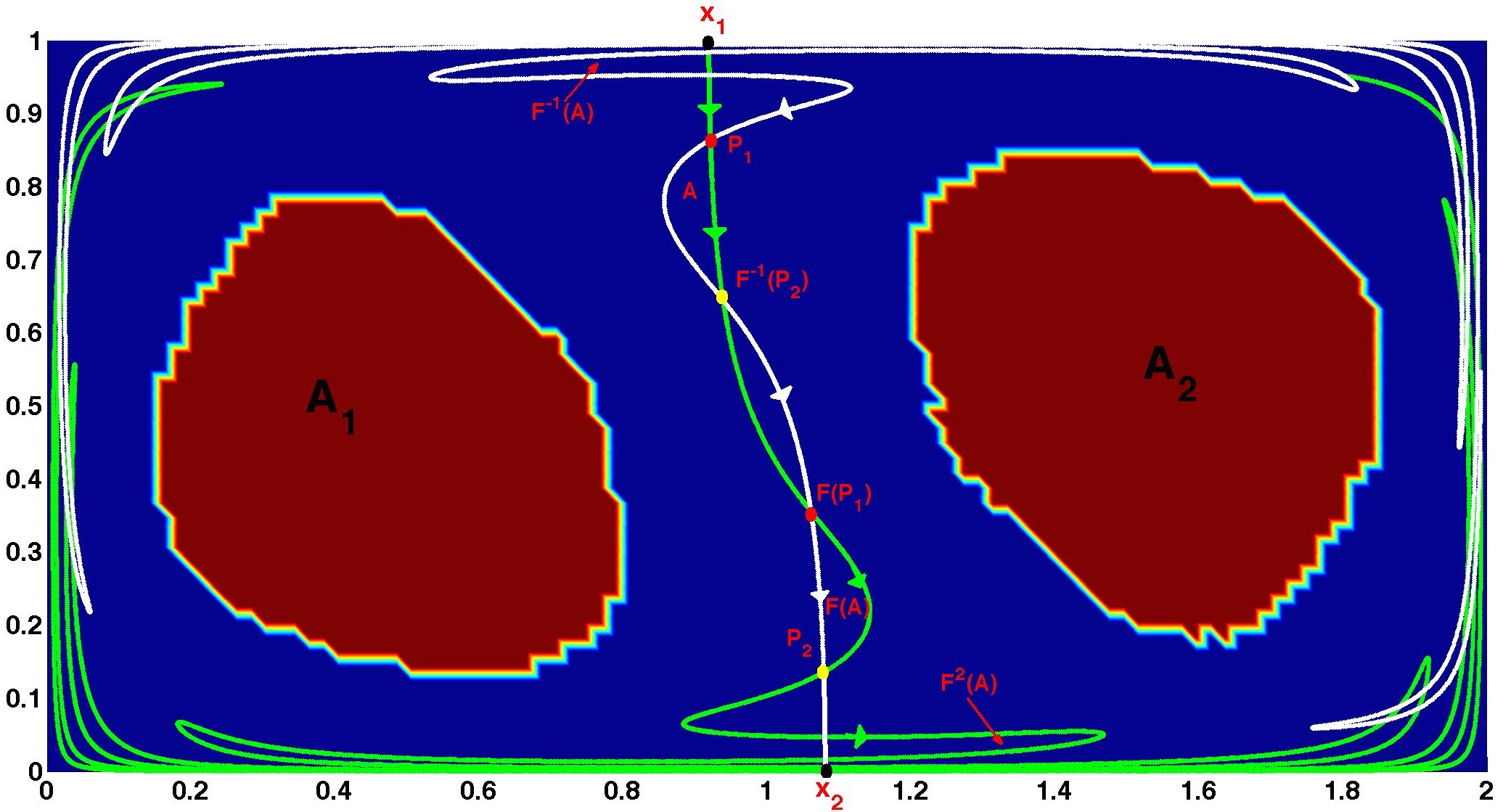}
\caption{\footnotesize{Invariant manifolds and lobe-dynamics in the double-gyre system (reproduced from Ref. \cite{grover2018optimal}). }}
\label{fig:DG_manif}
\end{figure}
While our algorithm for optimal transport can be applied between any arbitrary pair of measures, it is instructive to choose a pair of measures which are `distinguished' for the given system. Here, we choose almost-invariant sets as in Ref. \cite{grover2018optimal}.

In Figure \ref{fig:DG_manif}, two almost-invariant sets, $A_1$ and $A_2$ are also shown. We choose the initial and final measures to be uniform measures supported on $A_1$ and $A_2$, respectively. Both measures are normalized to sum to unity. \PGn{We solve the the optimal transport problem by discretizing $X$ into $m=60\times 30$ boxes for different time horizons $t_f$. For each $t_f$, we choose $k$ such that $\Delta t=\dfrac{t_f}{k}=\dfrac{1}{40}$. We use a piecewise-constant approximation of the time-dependent drift vector field, i.e., $g_0(x,t_j+\delta t)\approx g_0(x,t_j)\; \forall\; \delta t\leq \Delta t, j\leq k$, for computing the corresponding generator using Eq. (\ref{eq:gen1}). The computation time for obtaining solution in CVX for the $t_f=1$ case is $9\times 10^3$ seconds, while the $t_f=10$ case takes $7.5\times 10^4$ seconds, and the $t_f=15$ takes about $10^5$ seconds. Our simulations are repeated with a finer grid-size $m=100\times 50$ to verify that our results are nearly independent of $m$. }

In Fig. \ref{fig:DG_OT_n1}, we show the optimal transport sequence for $t_f=\tau=1$. In other words, the transport is constrained to be completed in one time-period of the uncontrolled flow. Due to the short time-horizon, all of the mass is pushed across the invariant manifolds separating the the two gyres. In Fig. \ref{fig:DG_OT_n10}, the optimal transport sequence for the more interesting case with $t_f=10$ is shown. The transport process in this case is completely different than the $t_f=1$ case. We observe that the transport occurs in a `quantized' manner, i.e. packets of mass are transported, one at a time, via lobe-dynamics from left gyre to the right gyre. The number of such packets \emph{exactly} equal the number of time-periods in the time-horizon of the transport problem, i.e. $10$ in this case. Hence, while the global transport is being done by the natural dynamics via lobes, the role of control is to gather the mass in pre-images of the those lobes. For instance, in Figs \ref{fig:DG_OT_n10}((b)-(e)), the transport of one such packet via the sequence $F^{-1}(A)\rightarrow A\rightarrow F(A) \rightarrow F^2(A)$ is shown. The mechanism is essentially the same for other time-horizons that we analyzed, $t_f=2,5,8,12.5 \:\&\:15$. The increasing use of `natural' chaotic lobe-dynamics of the uncontrolled system during optimal transport should reflect in the optimal transport cost. This cost, $\tilde{W}$, decreases rapidly as the time-horizon $t_f$ is increased, as shown in Fig. \ref{fig:tVsCost_dg_gen}. Hence, the optimal transport solution discovers efficient paths in this chaotic system, where `going with the flow' is the best option. \PGn{This result provides a continuous-time control interpretation of the results of the discrete-time switching algorithm in Ref. \cite{grover2018optimal}. }

\begin{figure}[h!]
\centering
\subfloat[$t=0$]{\includegraphics[width=3in]{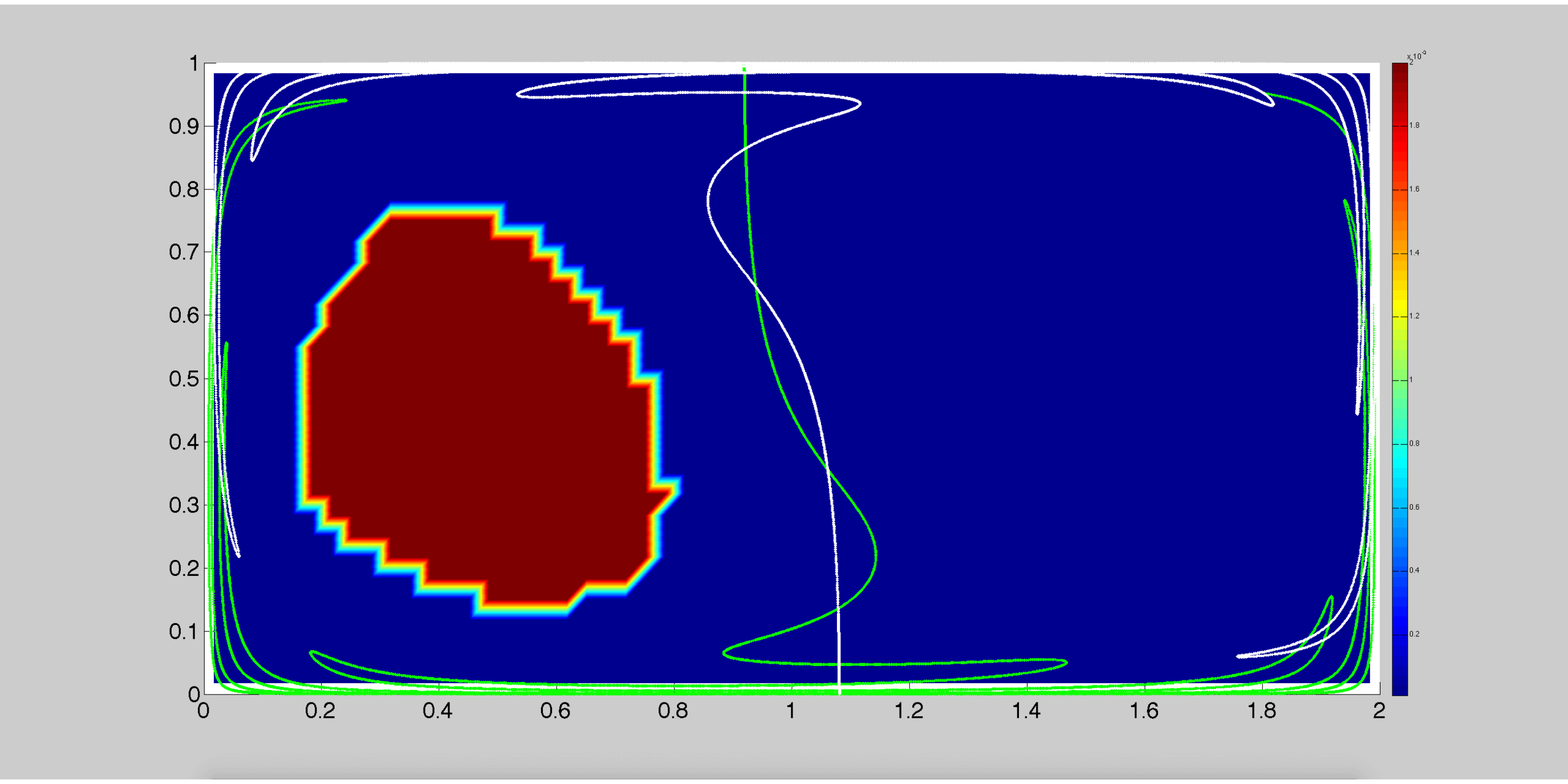}}\qquad
\subfloat[$t=0.25$]{\includegraphics[width=3in]{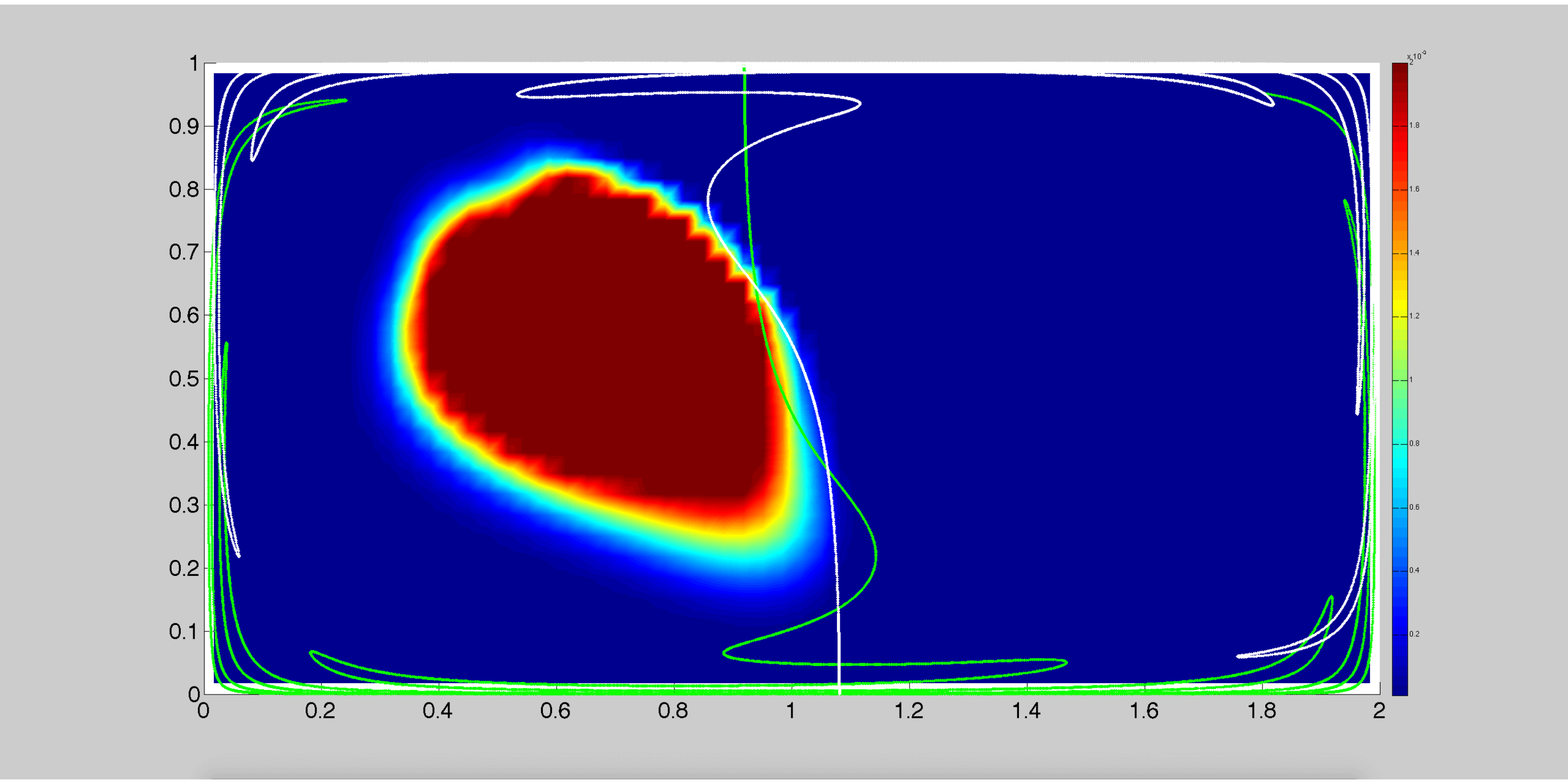}}\qquad
\subfloat[$t=0.5$]{\includegraphics[width=3in]{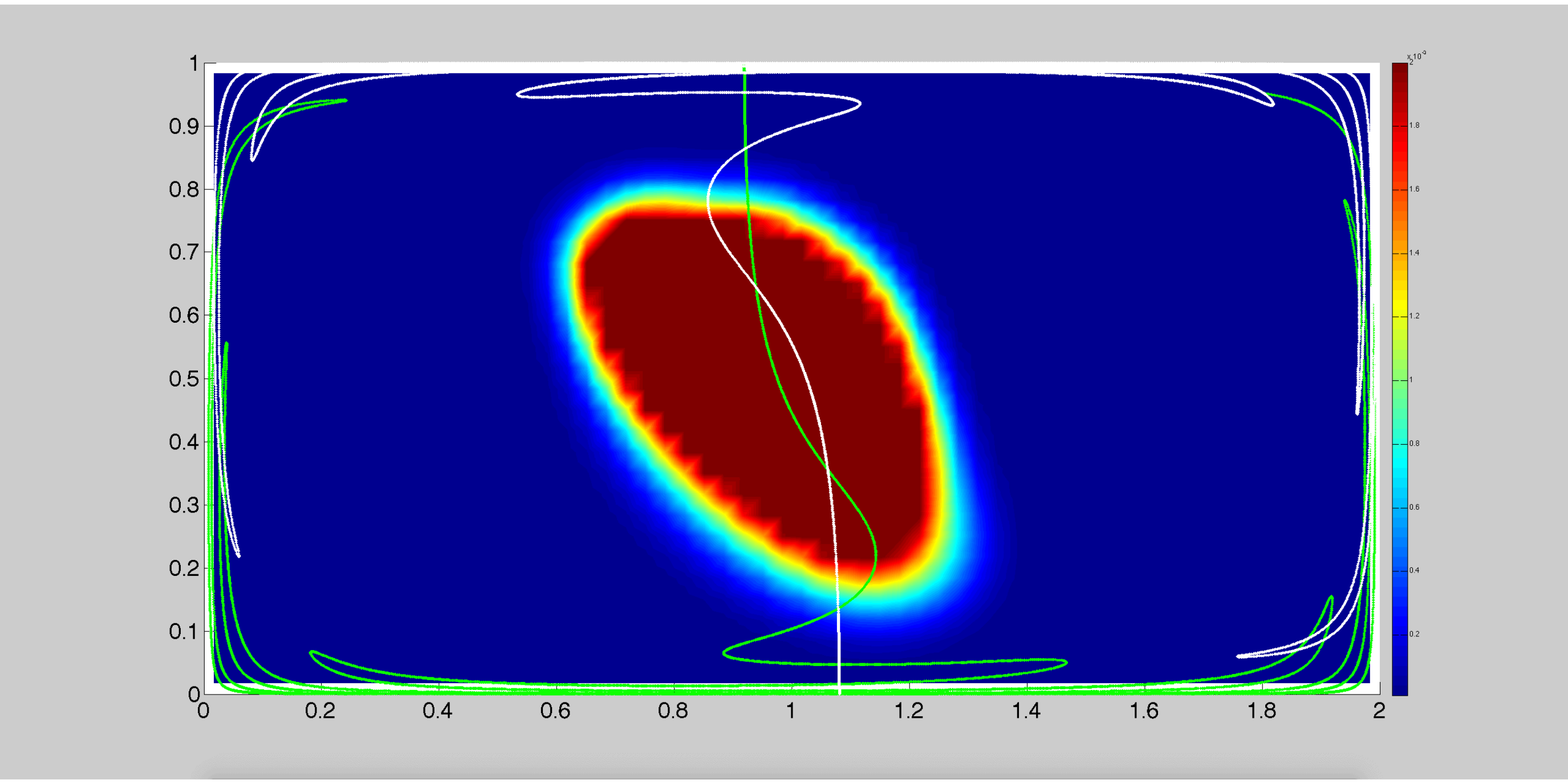}}\qquad
\subfloat[$t=0.75$]{\includegraphics[width=3in]{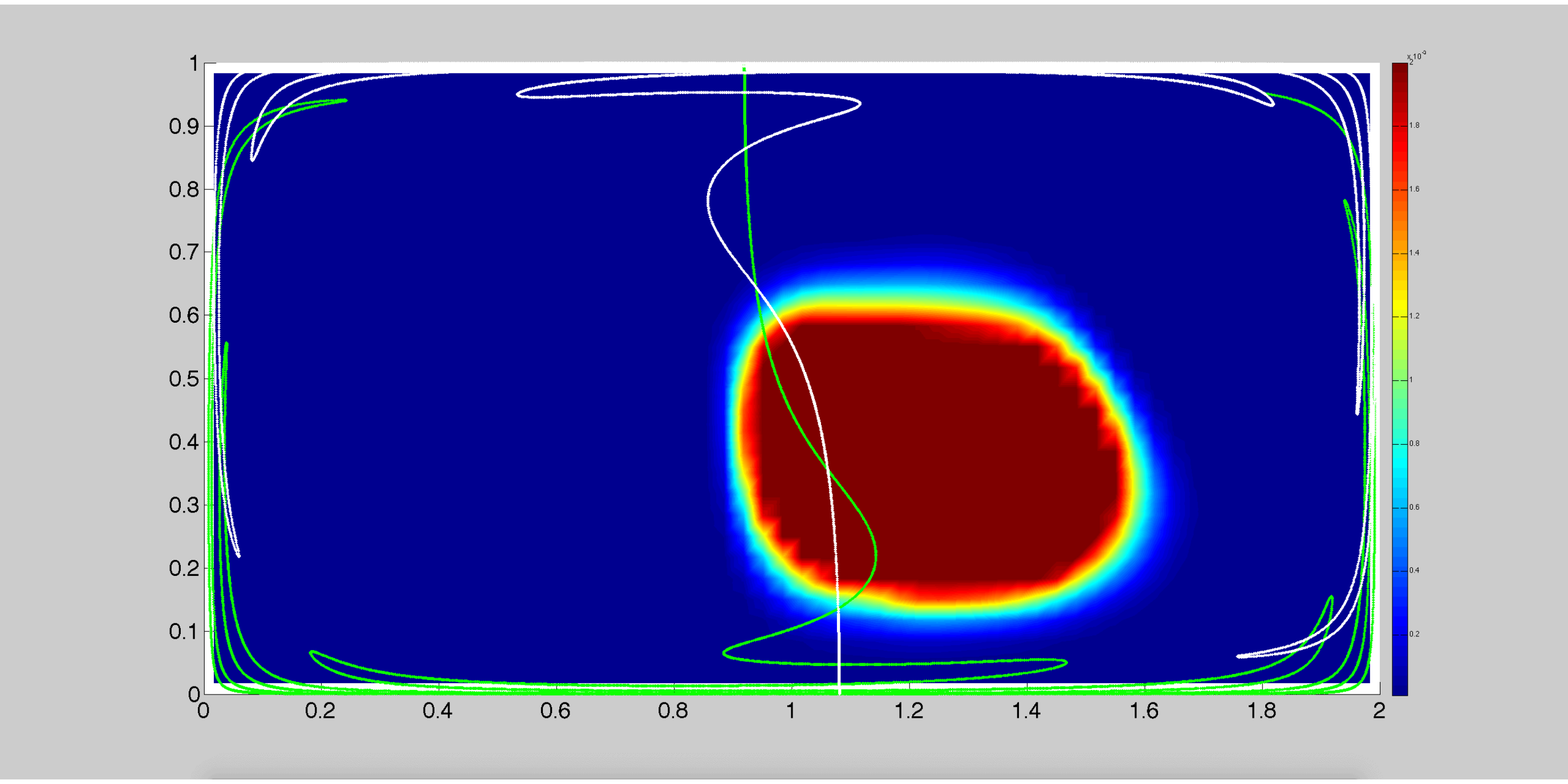}}\qquad
\subfloat[$t=1$]{\includegraphics[width=3in]{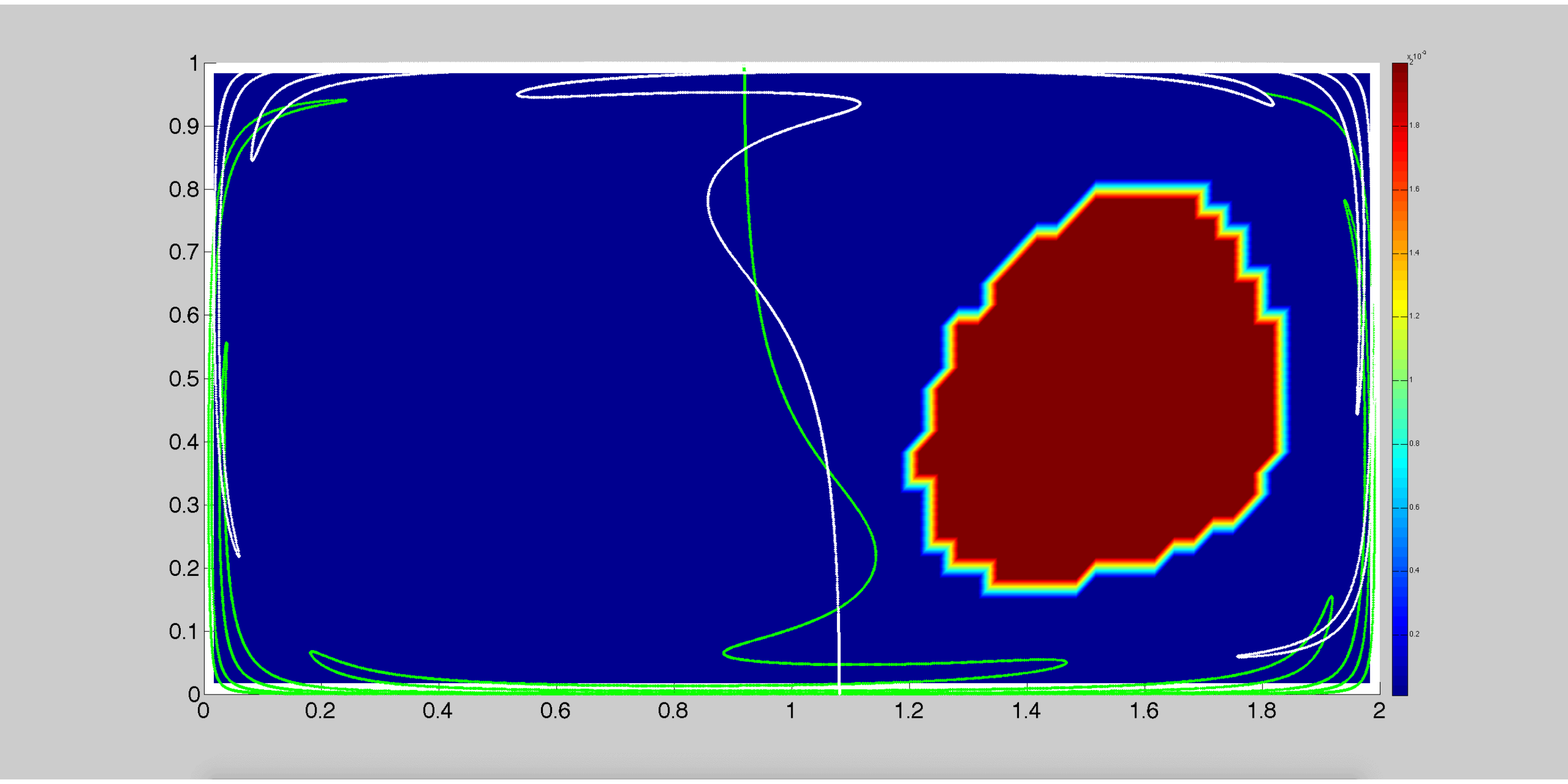}}
\caption{\footnotesize{Optimal transport in the periodic double gyre system (Eqs.(\ref{eq:DG1}-\ref{eq:DG2})) between measures shown in (a) and (e) for $t_f=1$. The parameters are $m=1800, \Delta t=\dfrac{1}{40}$. }}
\label{fig:DG_OT_n1}
\end{figure}

\begin{figure}[h!]
\centering
\subfloat[$t=0$]{\includegraphics[width=3in]{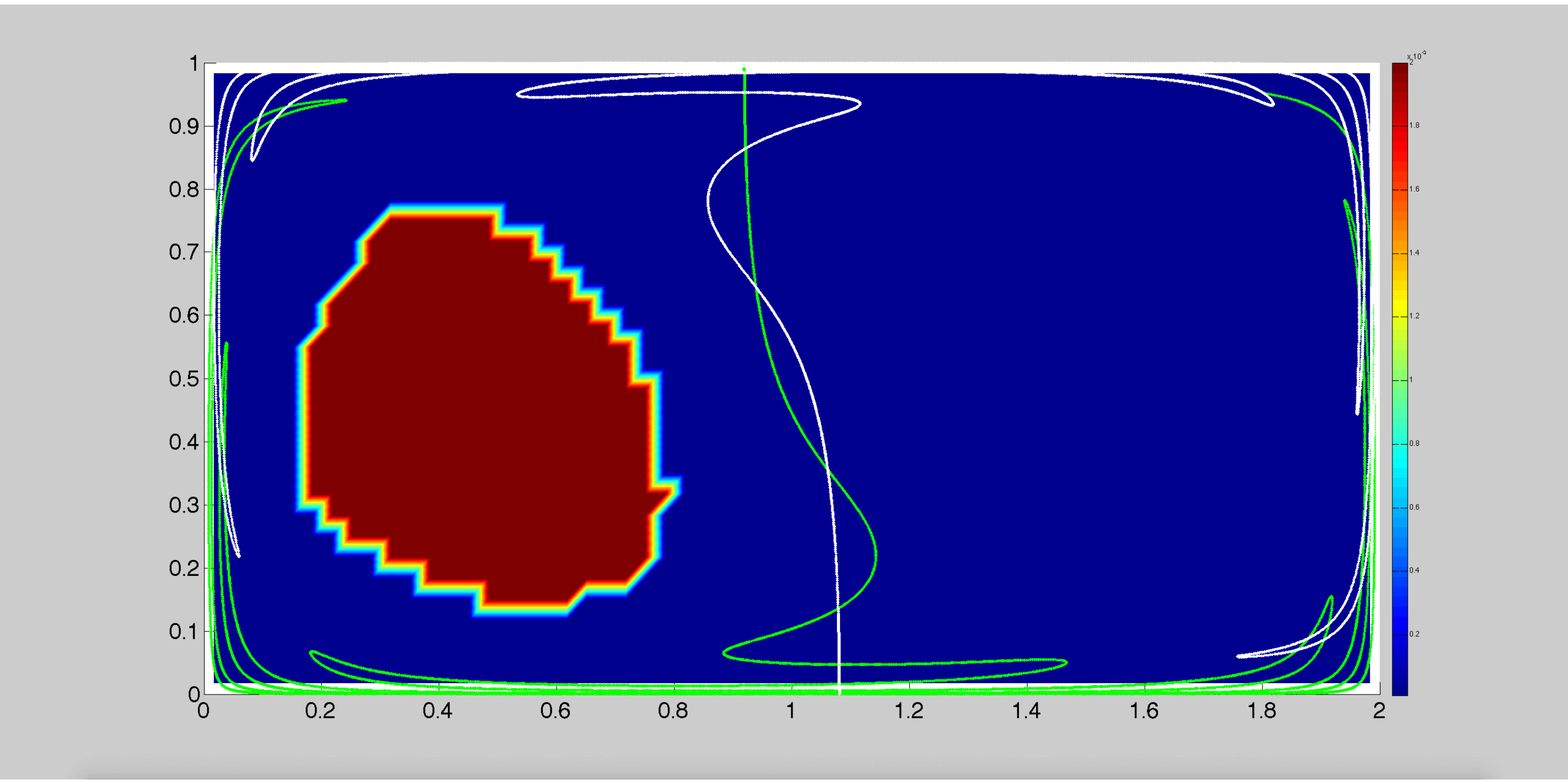}}\qquad
\subfloat[$t=2$]{\includegraphics[width=3in]{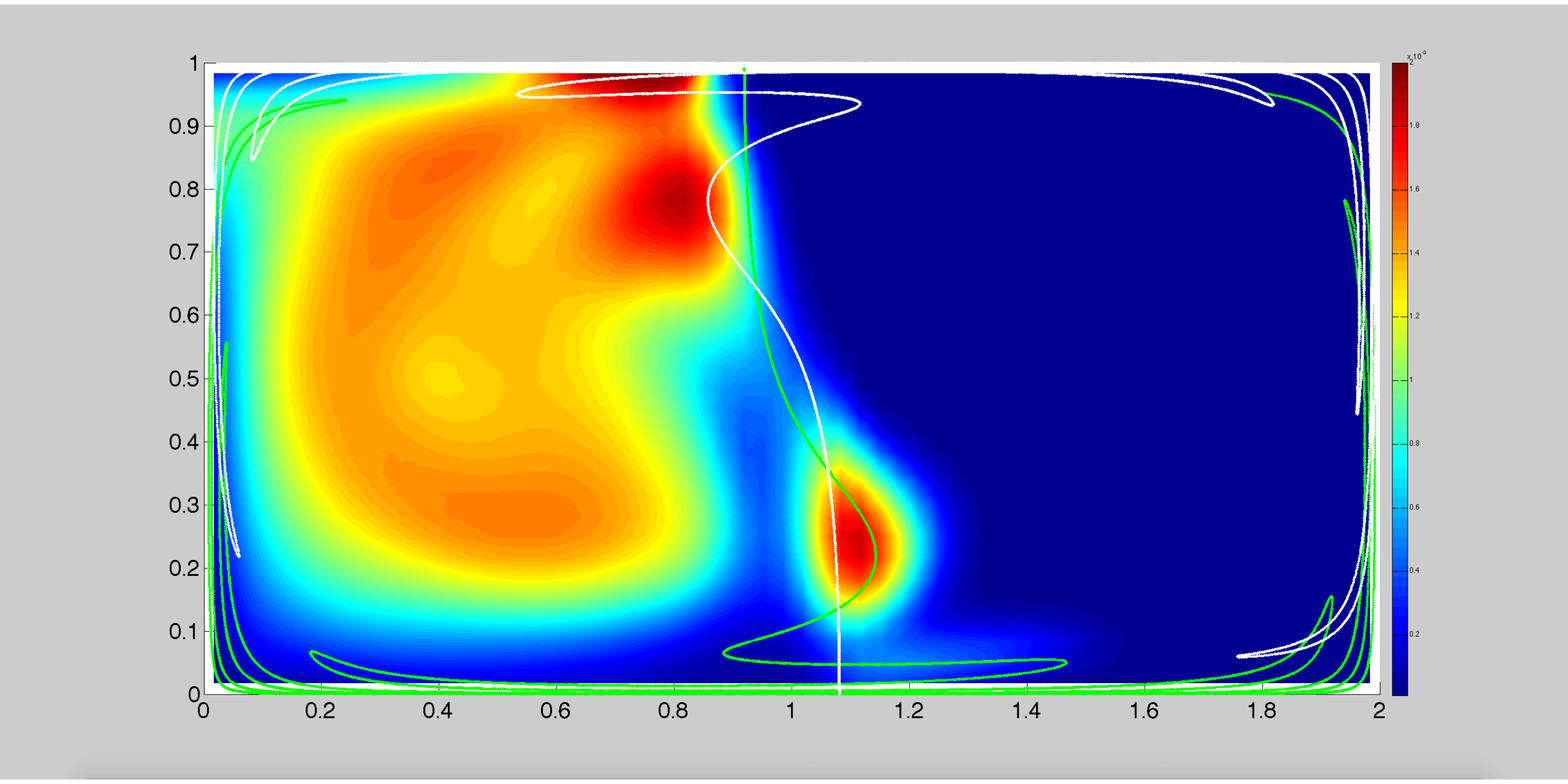}}\qquad
\subfloat[$t=3$]{\includegraphics[width=3in]{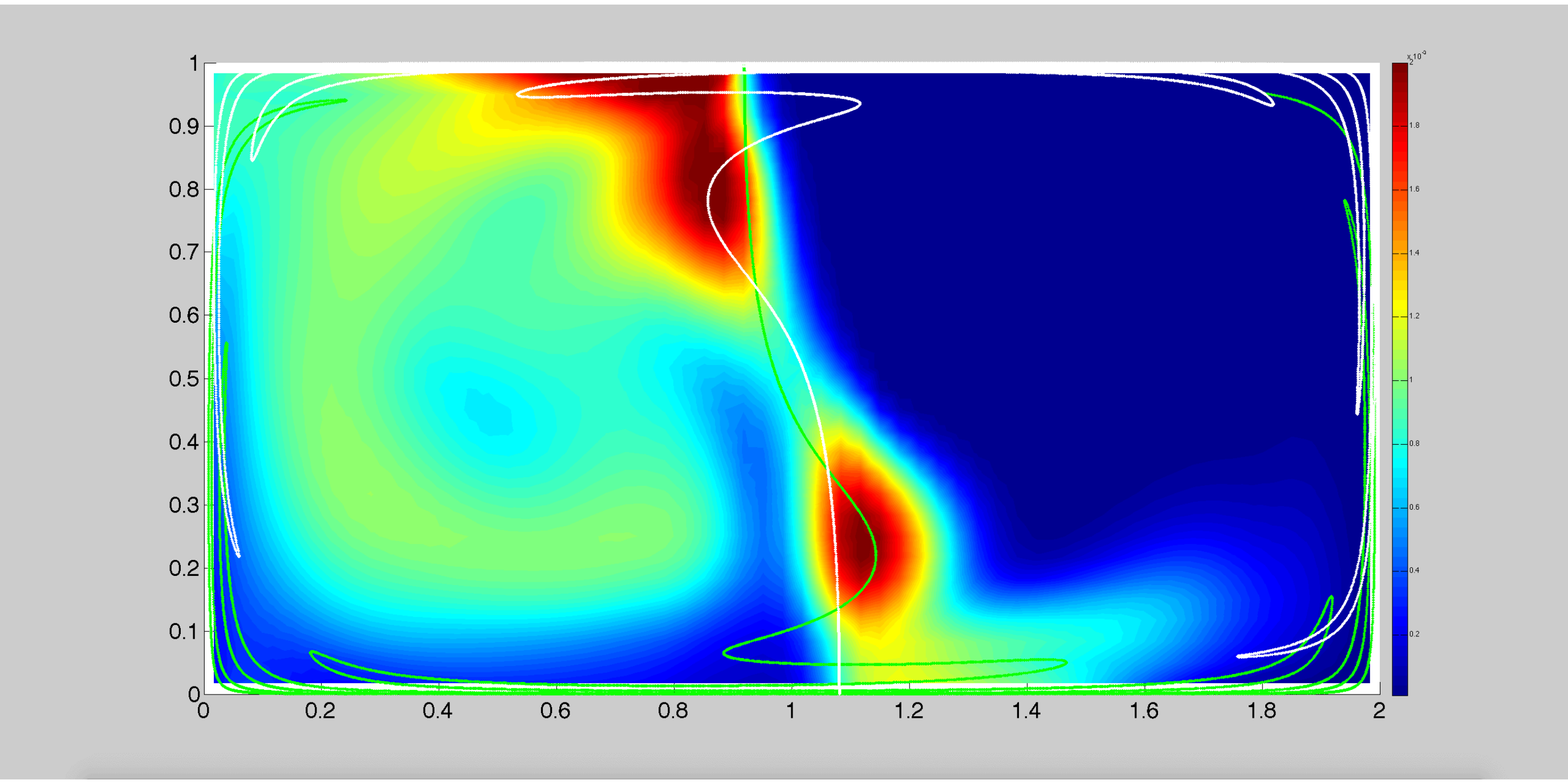}}\qquad
\subfloat[$t=4$]{\includegraphics[width=3in]{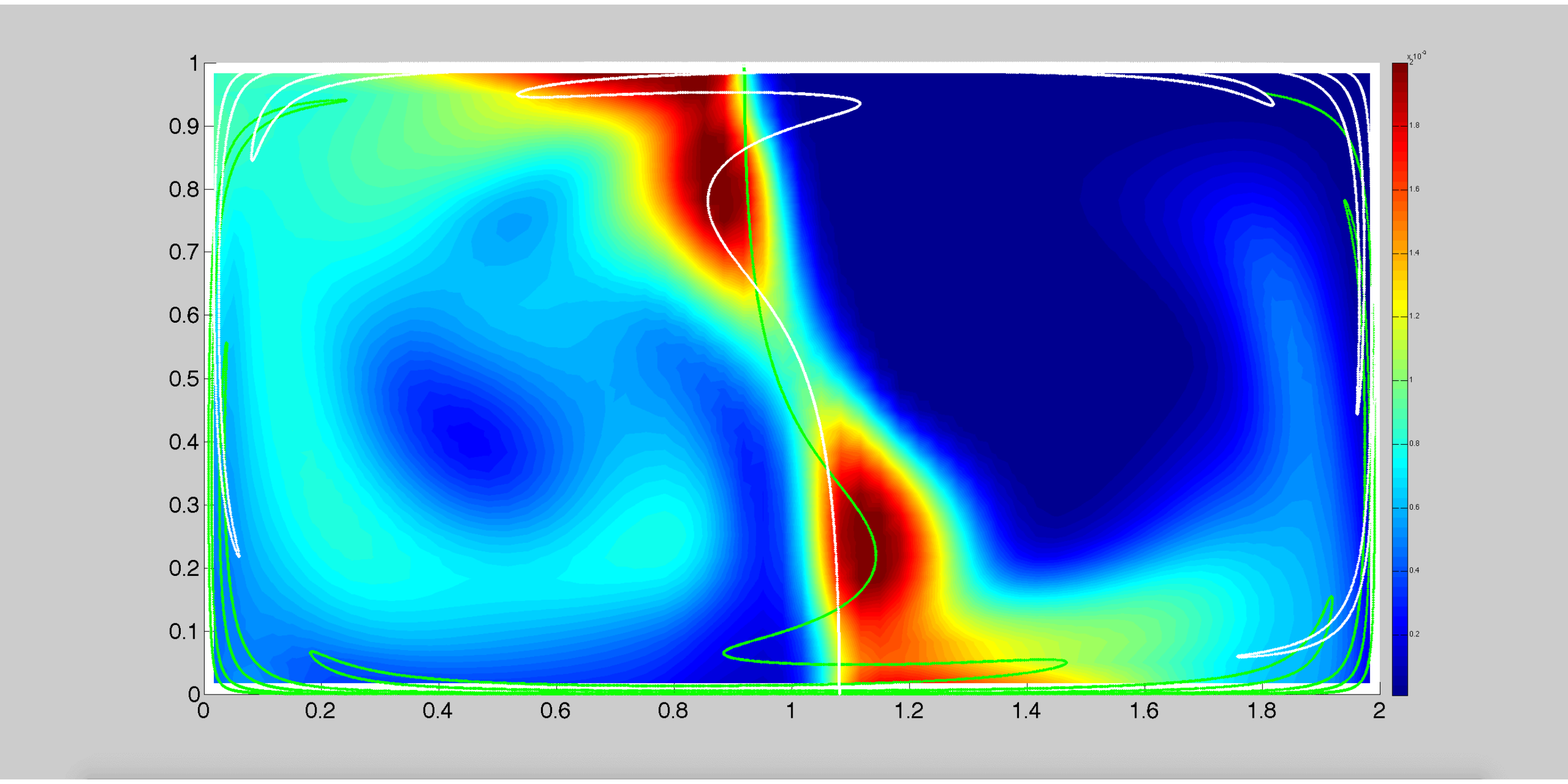}}\qquad
\subfloat[$t=5$]{\includegraphics[width=3in]{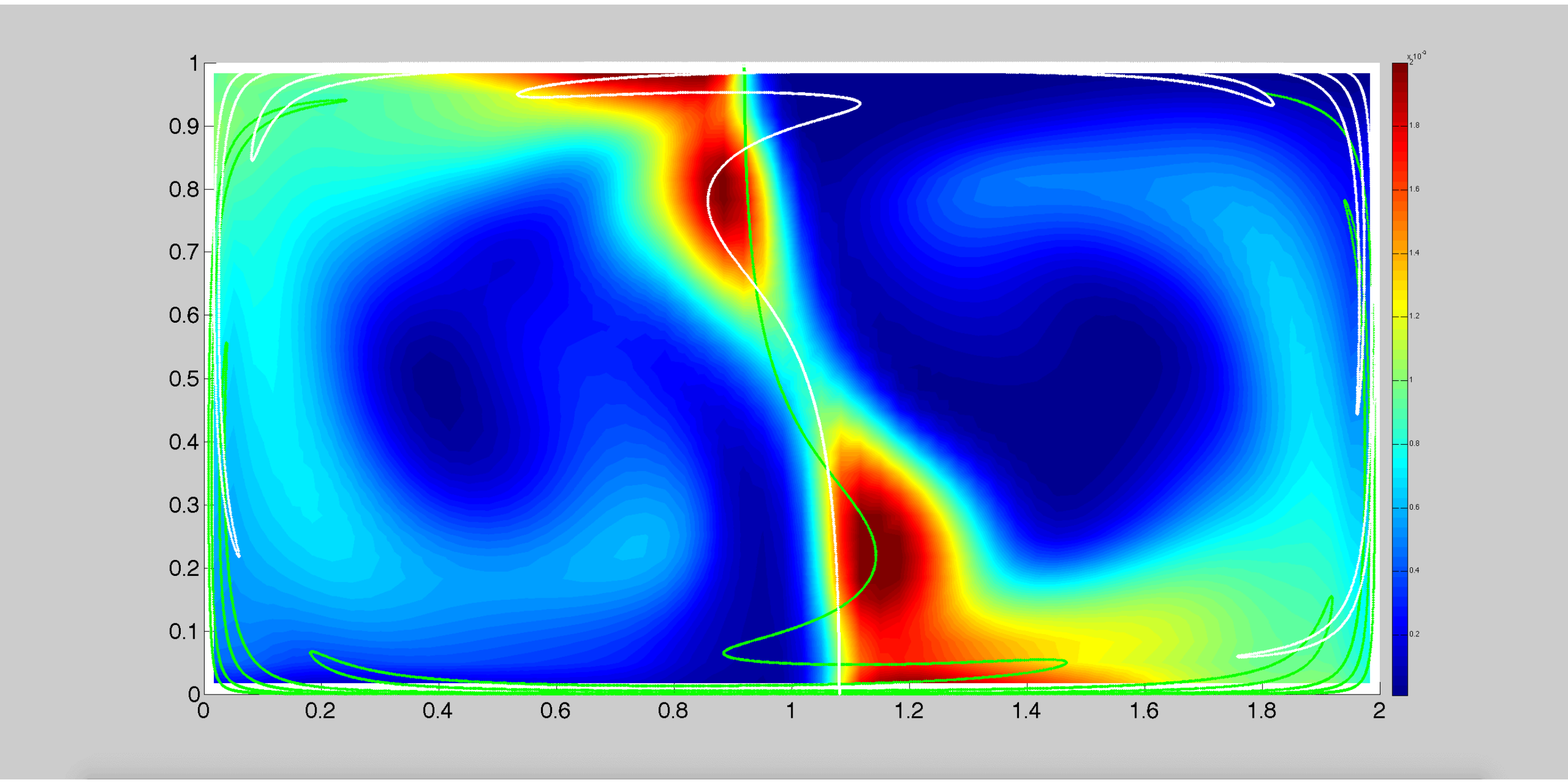}}\qquad
\subfloat[$t=8.5$]{\includegraphics[width=3in]{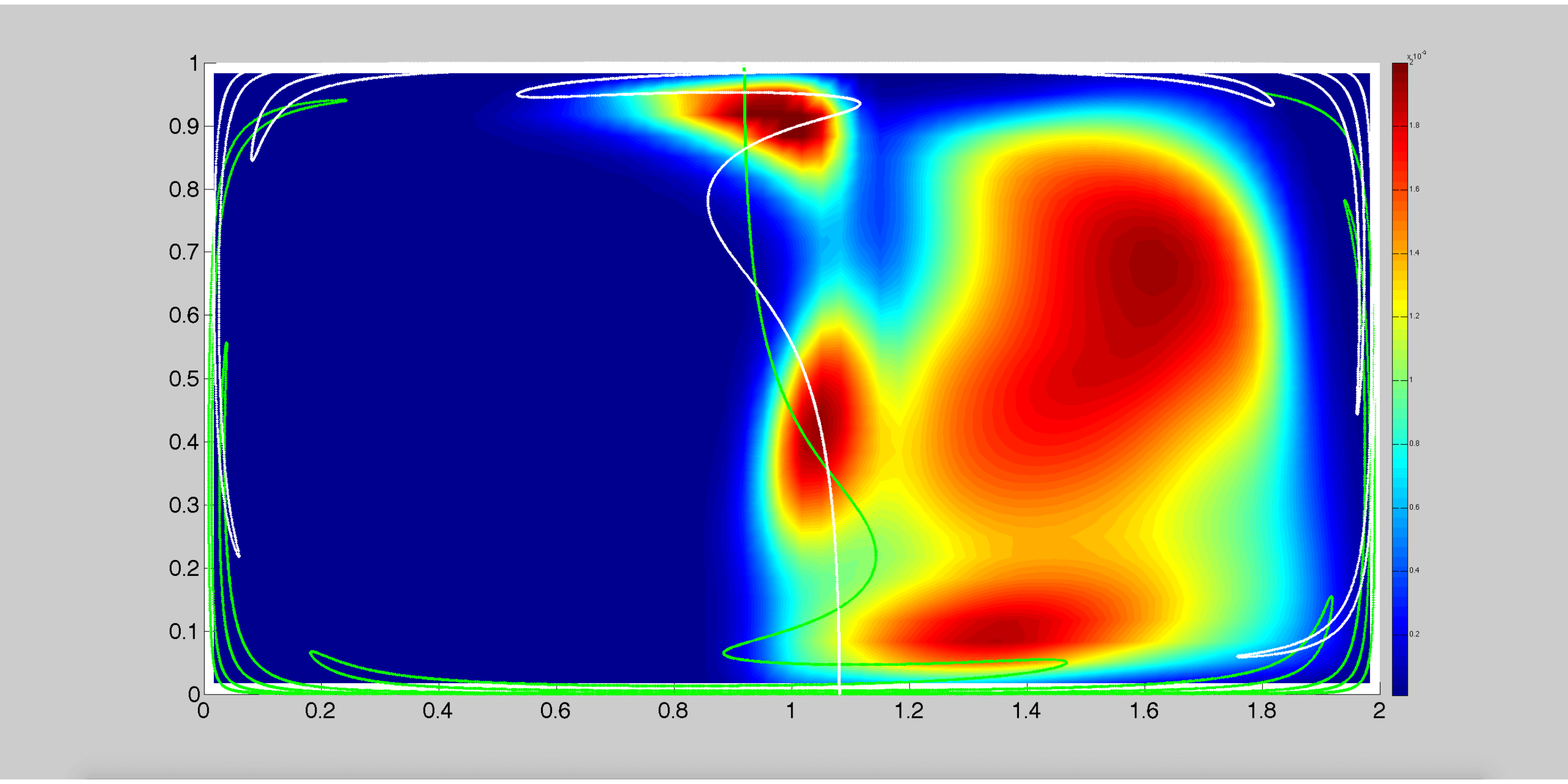}}\qquad
\subfloat[$t=9$]{\includegraphics[width=3in]{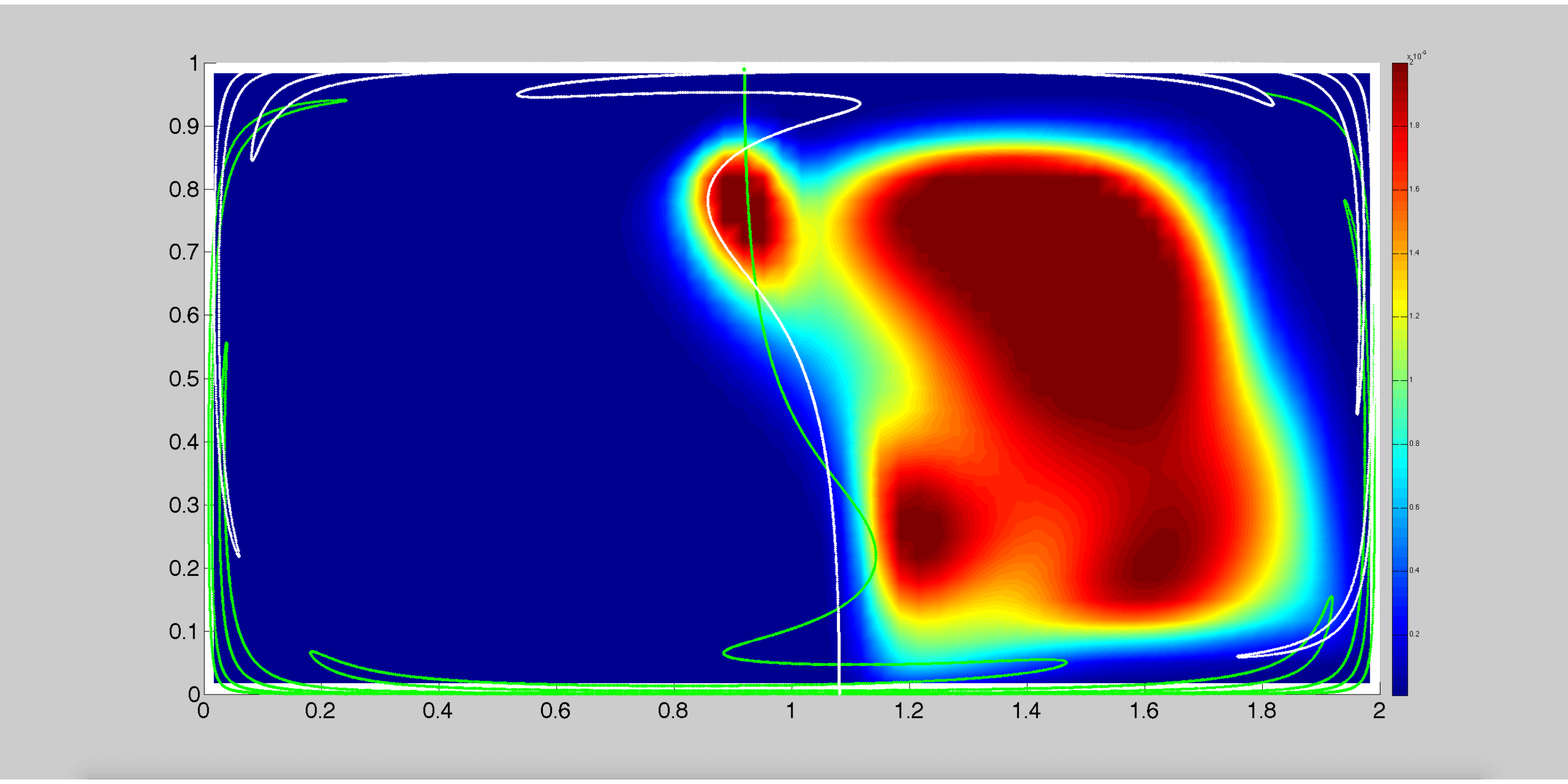}}\qquad
\subfloat[$t=10$]{\includegraphics[width=3in]{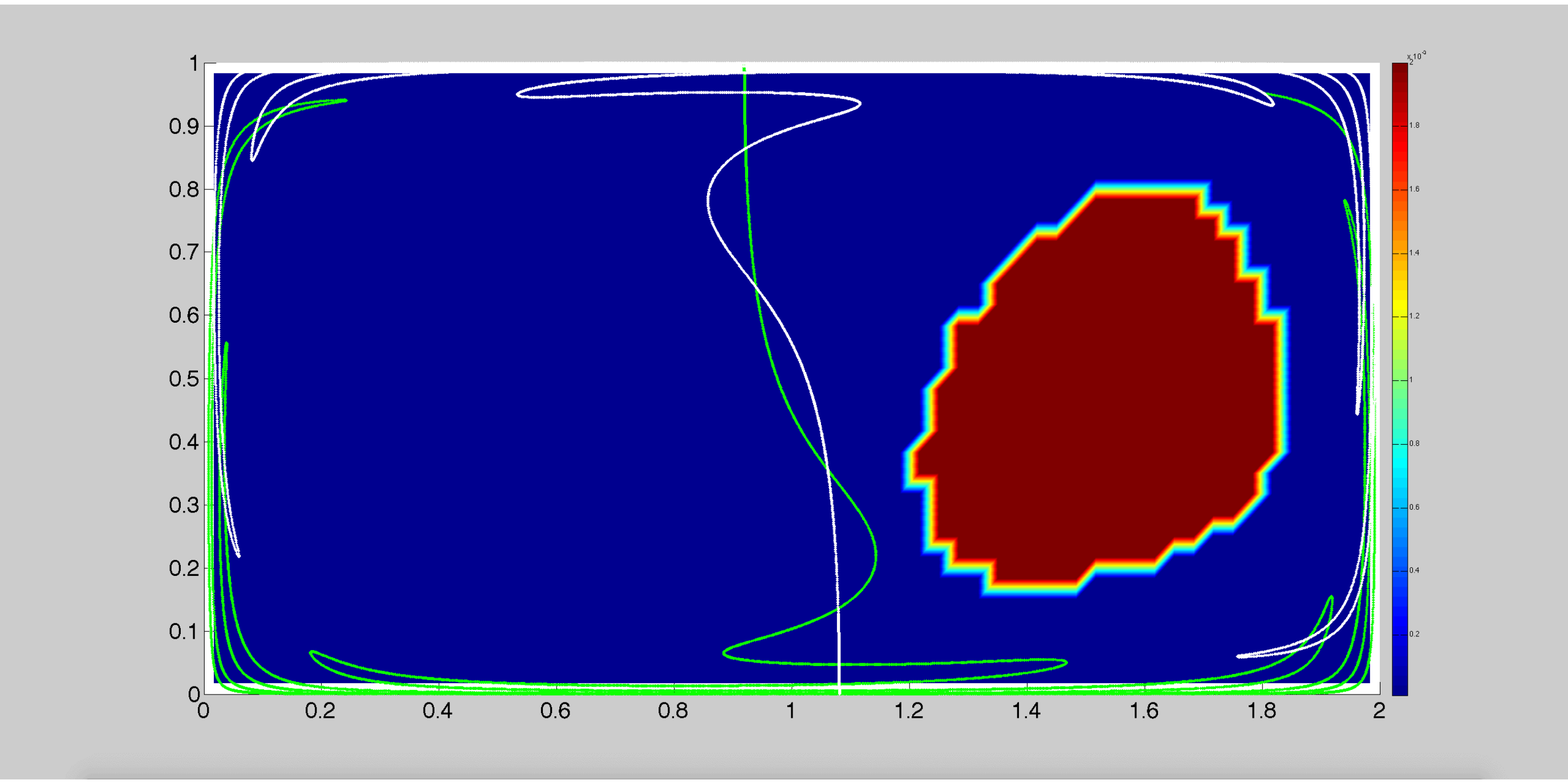}}\qquad
\caption{\footnotesize{Optimal transport in the periodic double gyre system between measures shown in (a) and (h) for $t_f=10, \Delta t=\dfrac{1}{40}$. The optimal transport solution shows a quantization phenomenon. Ten `packets' are transported via lobe-dynamics from the left side to the right side of the domain. (b)-(e) The transport of third packet to right side via the sequence $F^{-1}(A)\rightarrow A\rightarrow F(A) \rightarrow F^2(A)$.(f)-(g) The last packet gets transported to the right side. Animation available at: \url{https://www.youtube.com/watch?v=Pu7sCkpm4RY}}}
\label{fig:DG_OT_n10}
\end{figure}

\begin{figure}[h!]
\includegraphics[width=5in]{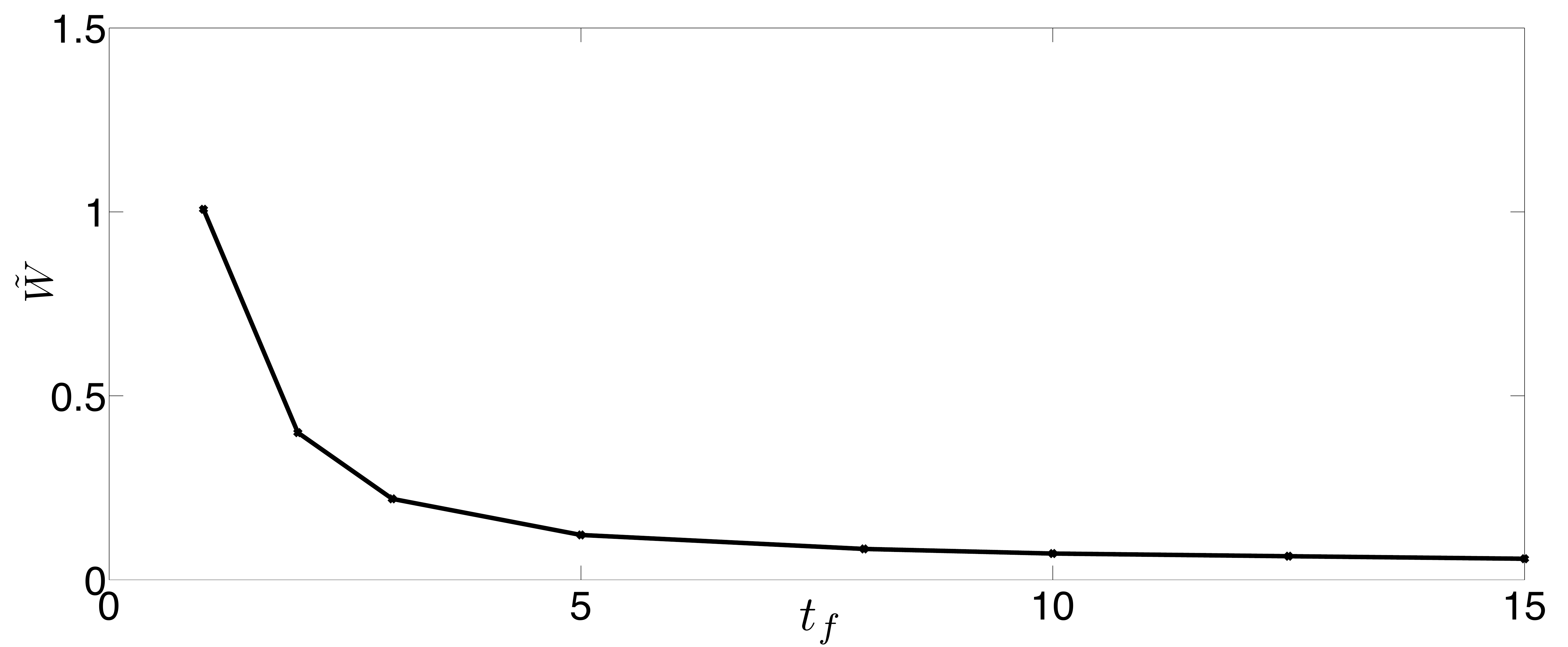}
\caption{\footnotesize{The cost of optimal transport between two measures supported on two AIS for the double-gyre system, as a function of time-horizon of the problem. }}
\label{fig:tVsCost_dg_gen}
\end{figure}
\subsection{Optimal Transport for Unicycle Model}
Finally, we consider optimal transport in a three-dimensional non-holonomic system, called the `unicycle' model. This system is a toy model for vehicle kinematics, and is used extensively in vehicle path planning and control \cite{murray1993nonholonomic,aicardi1995closed}. The states are cartesian coordinates $(x,y)\in\mathbf{R}^2$, and orientation $\theta\in \mathbf{S}^1$ of the unicycle. \PGn{The system equations on $M= \mathbf{S}^1\times \mathbf{R}^2$ are given by
\begin{align*}
\dot{\theta}=u_1,\\
\dot{x}=u_2\cos{\theta},\\
\dot{y}=u_2\sin{\theta},
\end{align*}

where $u_1$ is the steering speed, and $u_2$ is the translation speed. The above system is a driftless system with control vector fields $g_1(\theta,x,y)=[1 \; 0 \; 0]^\intercal$,  $g_2(\theta,x,y)=[0 \;\cos{\theta} \; \sin{\theta}]^\intercal$. These do not span the tangent space $T_xM$, but their Lie-algebra does, i.e. $Lie_x \bigg \lbrace g_i: i \in \lbrace 1,2 \rbrace \bigg \rbrace = T_xM$. This can be seen by noting that the Lie-bracket $[g_1,g_2]=[0 \;-\sin{\theta}\;\cos{\theta}]^{\intercal}$ does not lie in  $span \{g_1,g_2\}$. Hence, this system satisfies condition $1$ of Theorem \ref{thm:strngcon}. By Theorem \ref{thm:g0}, the corresponding optimal transport problem for this driftless system is well-posed.}
The optimal control problem has been studied for various cost functions, and endpoint conditions \cite{krishnaprasad1993optimal,Kirillova2004,justh2015optimality}. The techniques from geometric mechanics, specifically Lie-Poisson reduction \cite{bloch2003nonholonomic}, have been successfully used to reduce the optimal control problem to a three-dimensional non-canonical Hamiltonian system. For this three-dimensional system, two conserved quantities can be found, and hence, the optimal controls $(u_1(t),u_2(t))$ can be solved explicitly in terms of Jacobi elliptic functions.

 To study the optimal transport problem for the unicycle model, take the control cost to be quadratic, i.e. $d(z_1,z_2)=\inf_{\mathbb{U}_{z_1}^{z_2}}\int_0^1\sqrt{u_1^2+u_2^2}dt$. We compute optimal transport solutions for two scenarios. In the first case, $\mu_0$ is chosen to be a measure supported on box containing $(0,0.5,0)$, and $\mu_1$ is chosen to be uniform measure supported on the union of boxes containing $(1,0,0)$ and $(1,1,0)$. In the second case, $\mu_0$ is chosen to be a measure supported on box containing $(0,0.5,0)$, and $\mu_1$ is chosen to be a uniform measure supported on the union of boxes containing $(1,1,\frac{\pi}{2})$ and $(1,0,\frac{3\pi}{2})$. \PGn{We use $m=25^3$ boxes to discretize the 3D phase space $M$, and $t_f=1$ with $k=20$ equally-spaced time steps, for both cases. The computation in CVX takes about $6\times 10^4$ seconds. }The initial and final measures for the two cases are depicted in Fig \ref{fig:Un_case12}. The optimal transport solution for the first case is shown in Fig. \ref{fig:OT_Un1}. Since the final orientation is prescribed to be along the $x-$axis, this leads to a splitting of the measure half-way in the transport, and steering of the two halves horizontally to their final positions. The optimal transport solution for the second case is shown in Fig. \ref{fig:OT_Un2}. The solution in this case is qualitatively different from the first case. The two halves split and then move vertically towards final positions.
\begin{figure}[h!]
\centering
\subfloat[Case 1]{\includegraphics[width=2.5in]{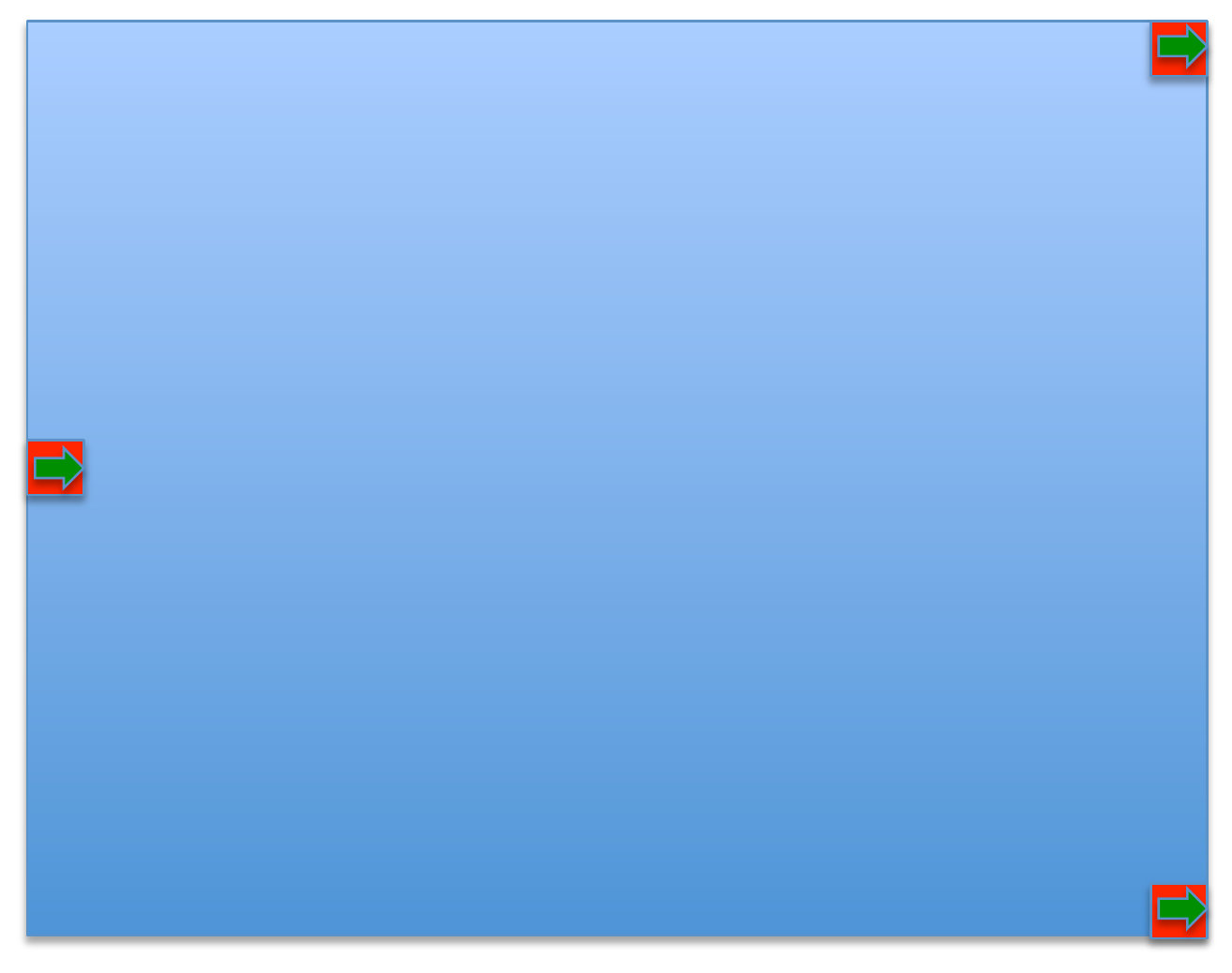}}\qquad
\subfloat[Case 2]{\includegraphics[width=2.5in]{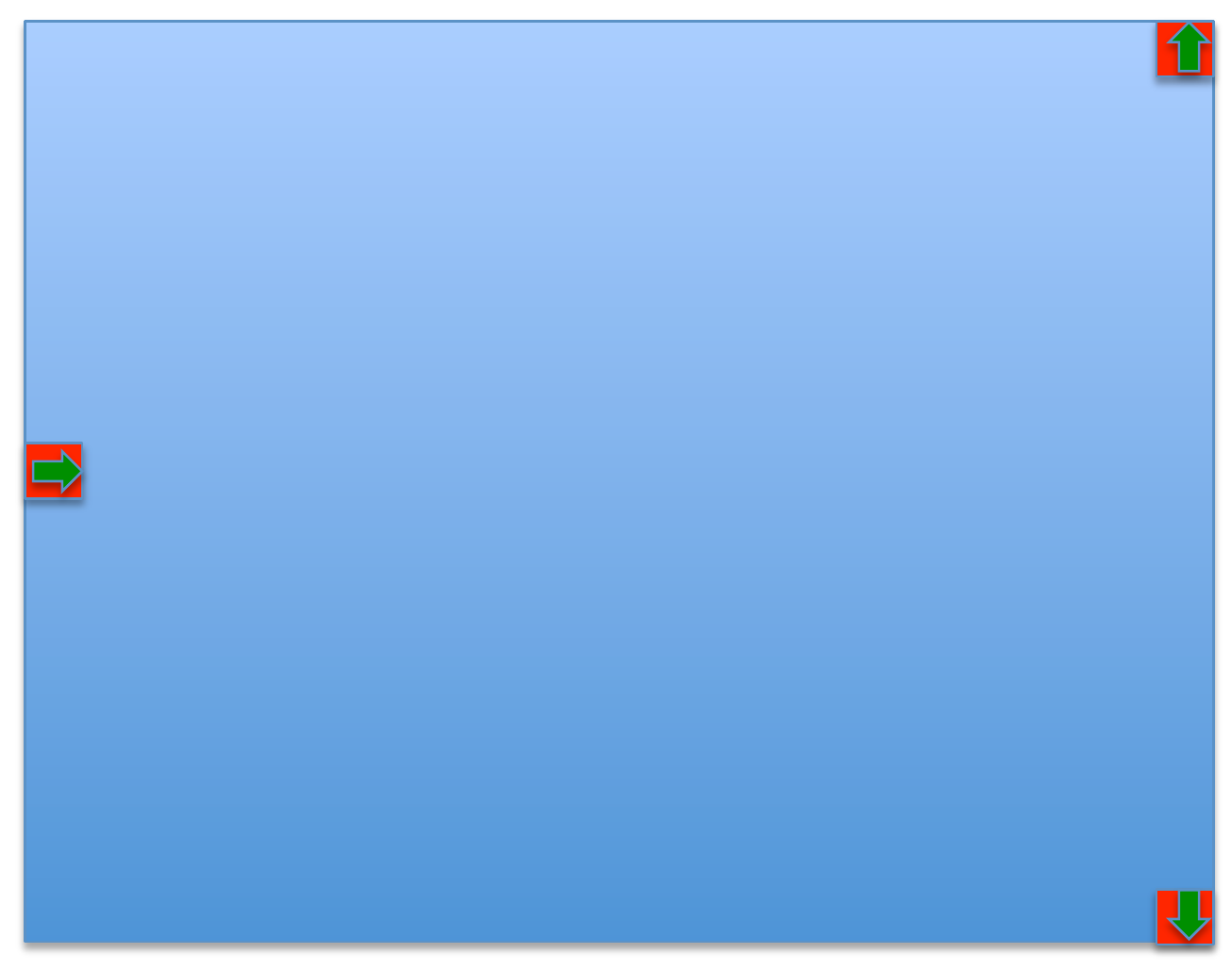}}\qquad
\caption{\footnotesize{ Initial and final measures shown on $(x,y)$ plane for two cases of optimal transport in the unicycle model. The green arrows indicate the third coordinate $\theta$. (a) $\mu_0$ is supported on $(0,0.5,0)$, $\mu_1$ is supported on $(1,0,0)$ and $(1,1,0)$. (b) $\mu_0$ is supported on $(0,0.5,0)$, $\mu_1$ is supported on $(1,0,\frac{3\pi}{2})$ and $(1,1,\frac{\pi}{2}).$}}
\label{fig:Un_case12}
\end{figure}

\begin{figure}[h!]
\centering
\subfloat[t=0]{\includegraphics[width=2.5in]{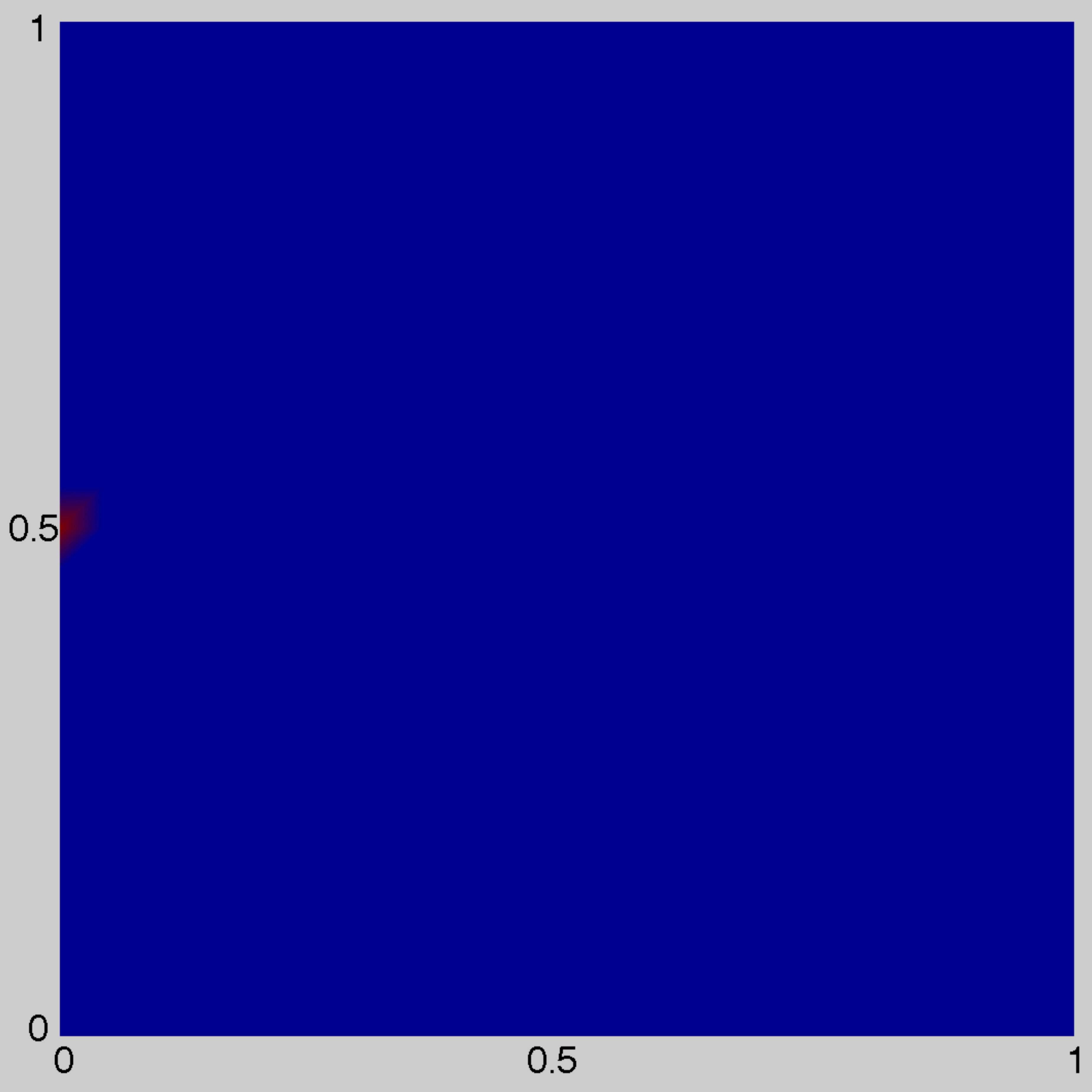}}\qquad
\subfloat[t=0.2]{\includegraphics[width=2.5in]{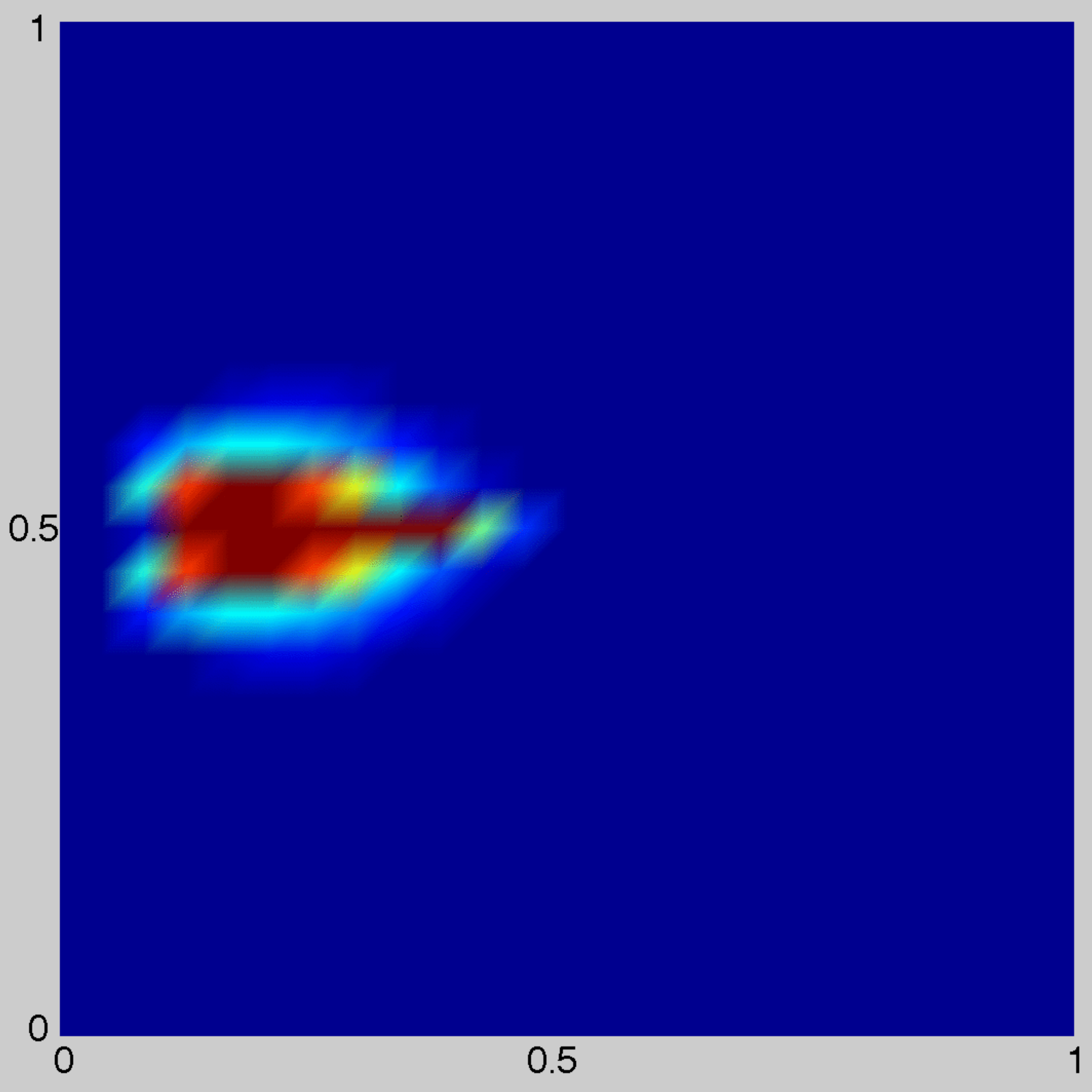}}\qquad
\subfloat[t=0.5]{\includegraphics[width=2.5in]{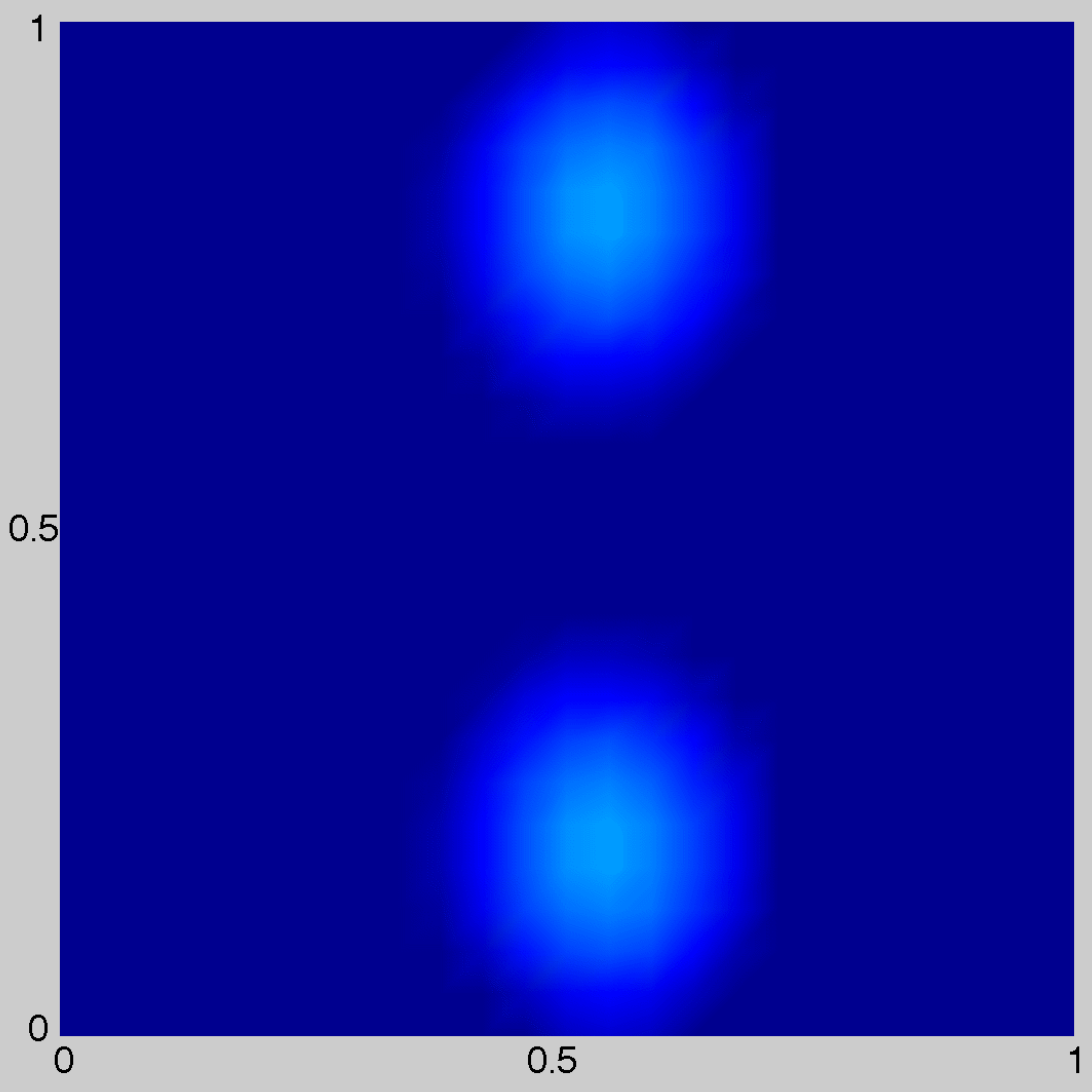}}\qquad
\subfloat[t=0.7]{\includegraphics[width=2.5in]{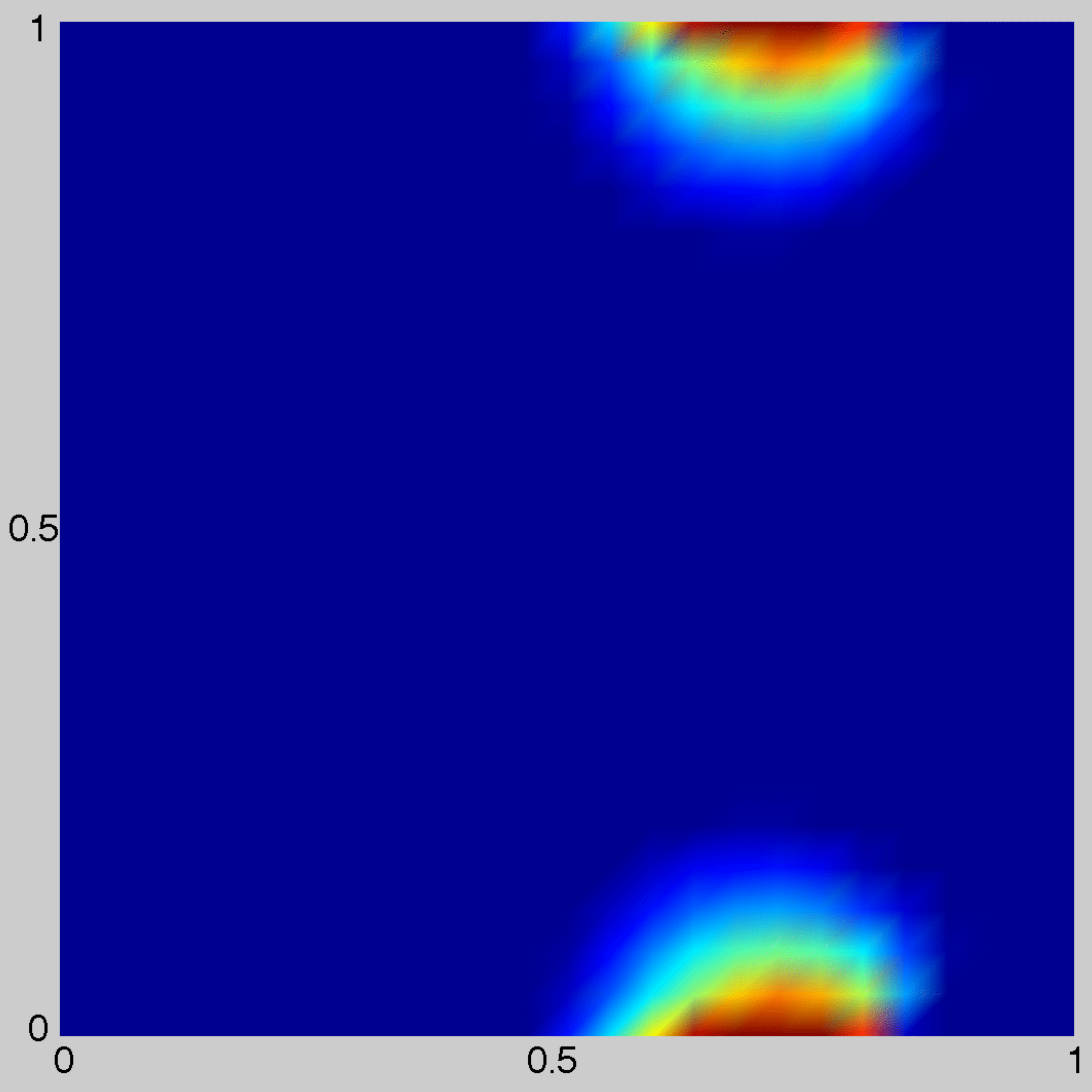}}\qquad
\subfloat[t=0.8]{\includegraphics[width=2.5in]{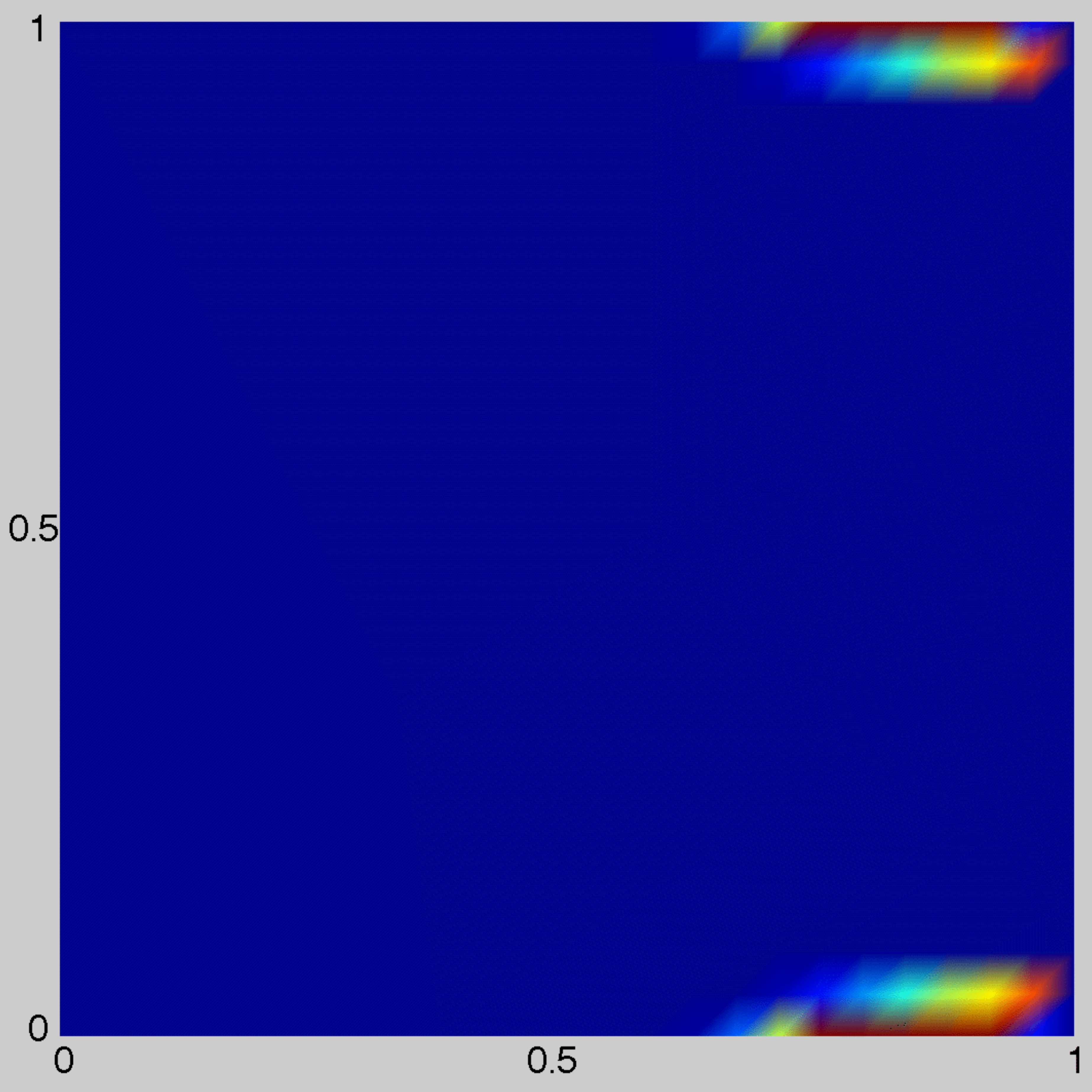}}\qquad
\subfloat[t=1]{\includegraphics[width=2.5in]{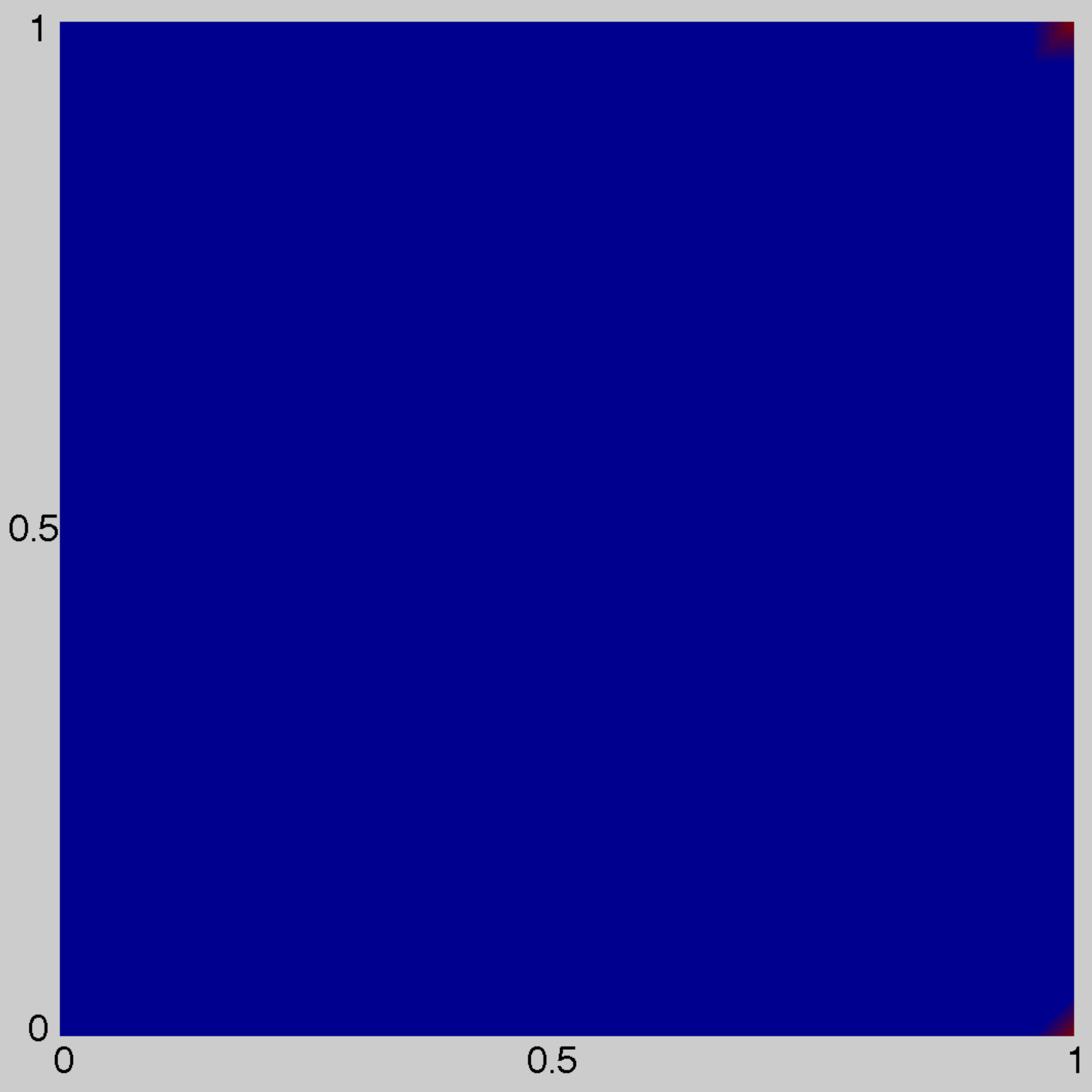}}\qquad
\caption{\footnotesize{The optimal transport solution of unicycle model shown in the $x-y$ plane for  case 1. The grid size is $m=25^3$, and $k=20$. }}
\label{fig:OT_Un1}
\end{figure}

\begin{figure}[h!]
\centering
\subfloat[t=0]{\includegraphics[width=2.5in]{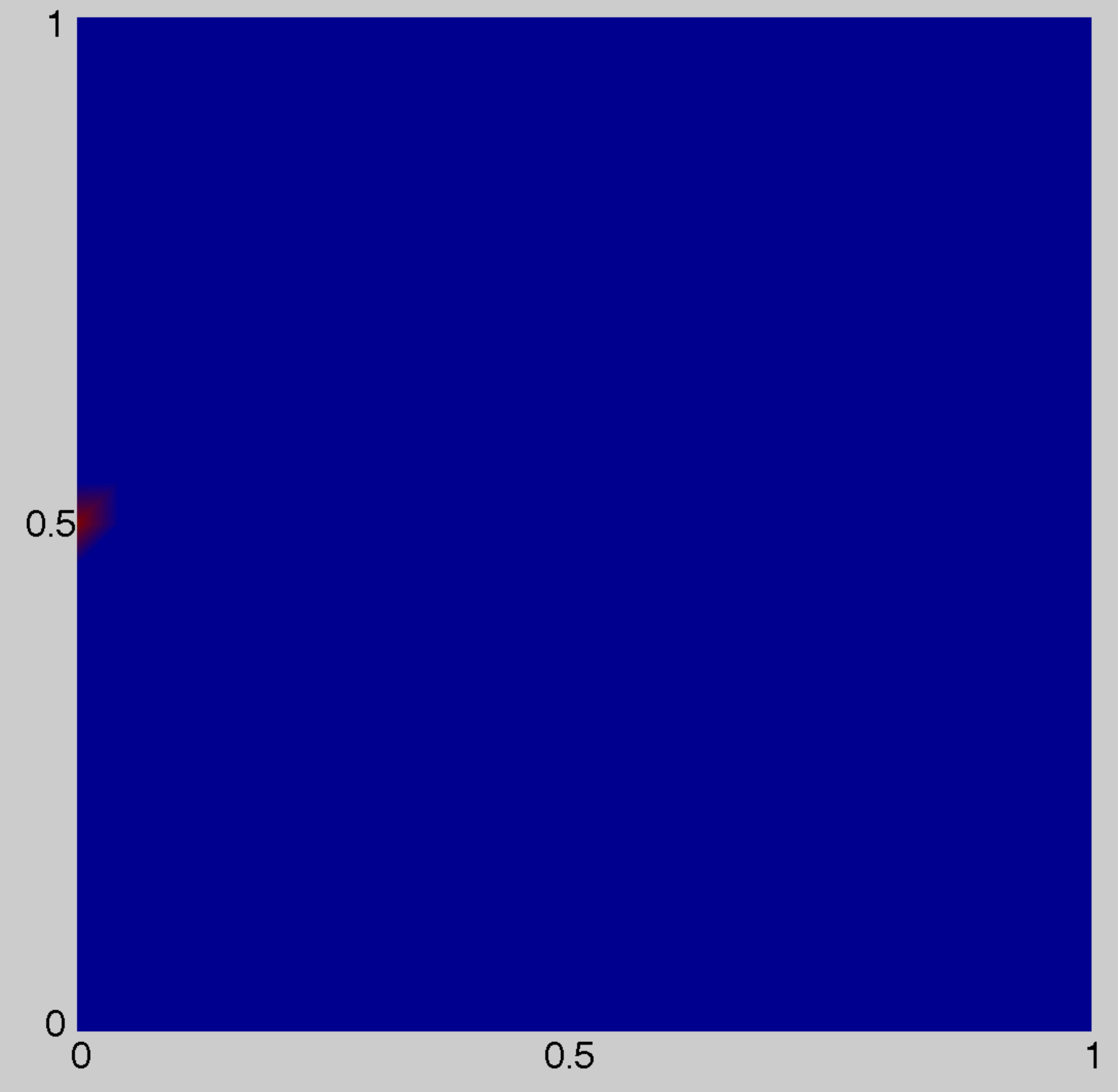}}\qquad
\subfloat[t=0.2]{\includegraphics[width=2.5in]{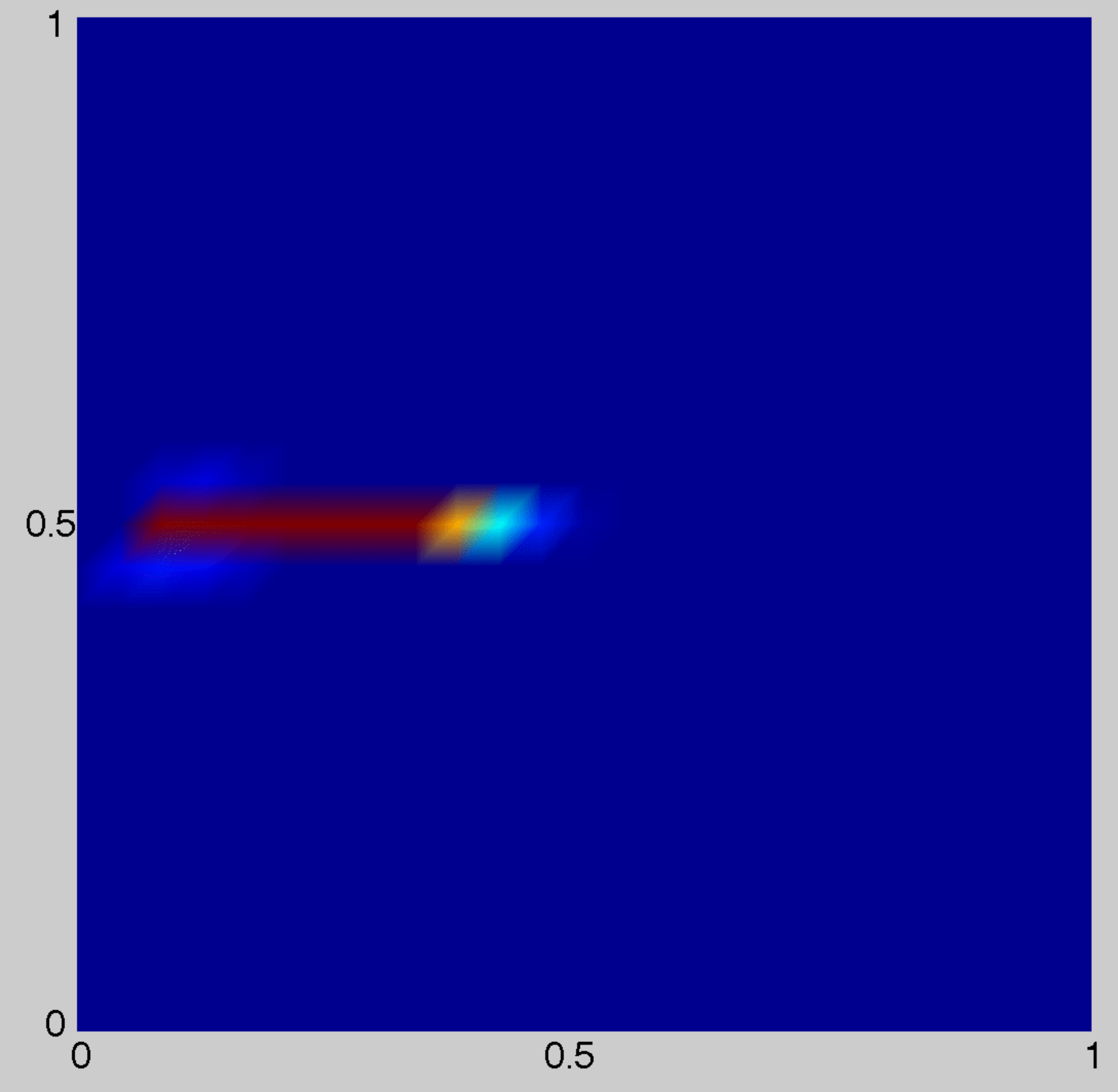}}\qquad
\subfloat[t=0.5]{\includegraphics[width=2.5in]{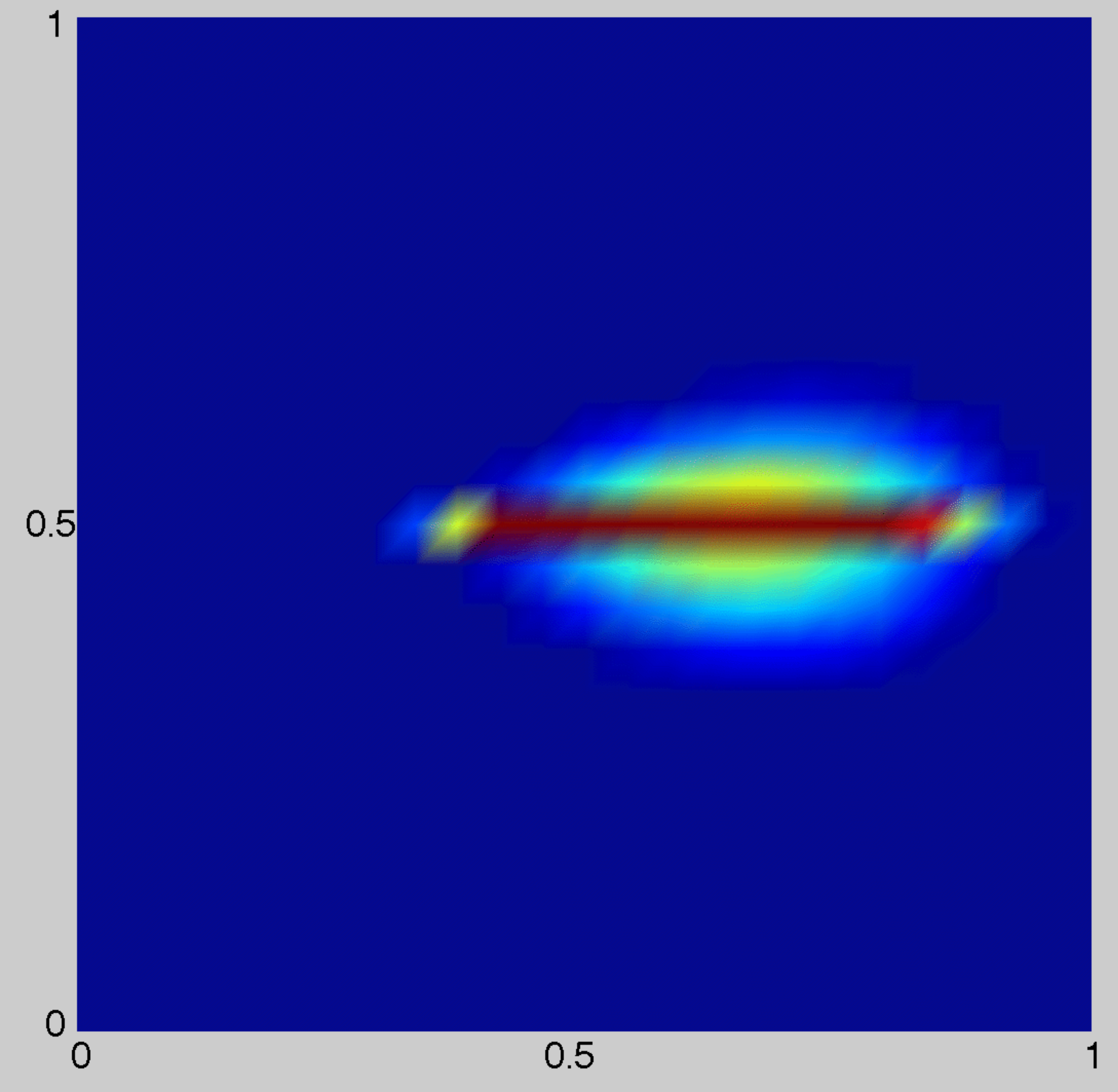}}\qquad
\subfloat[t=0.7]{\includegraphics[width=2.5in]{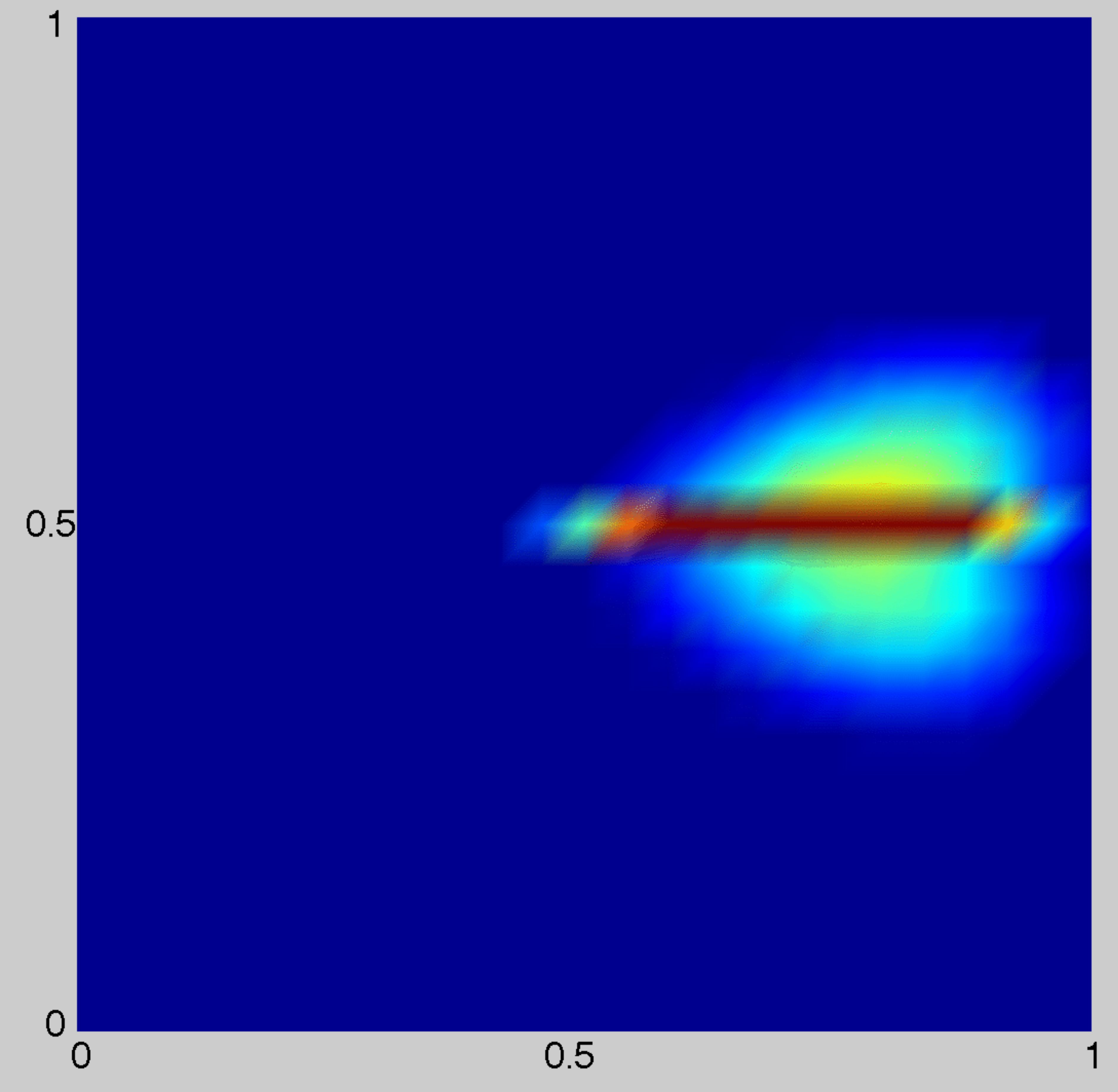}}\qquad
\subfloat[t=0.8]{\includegraphics[width=2.5in]{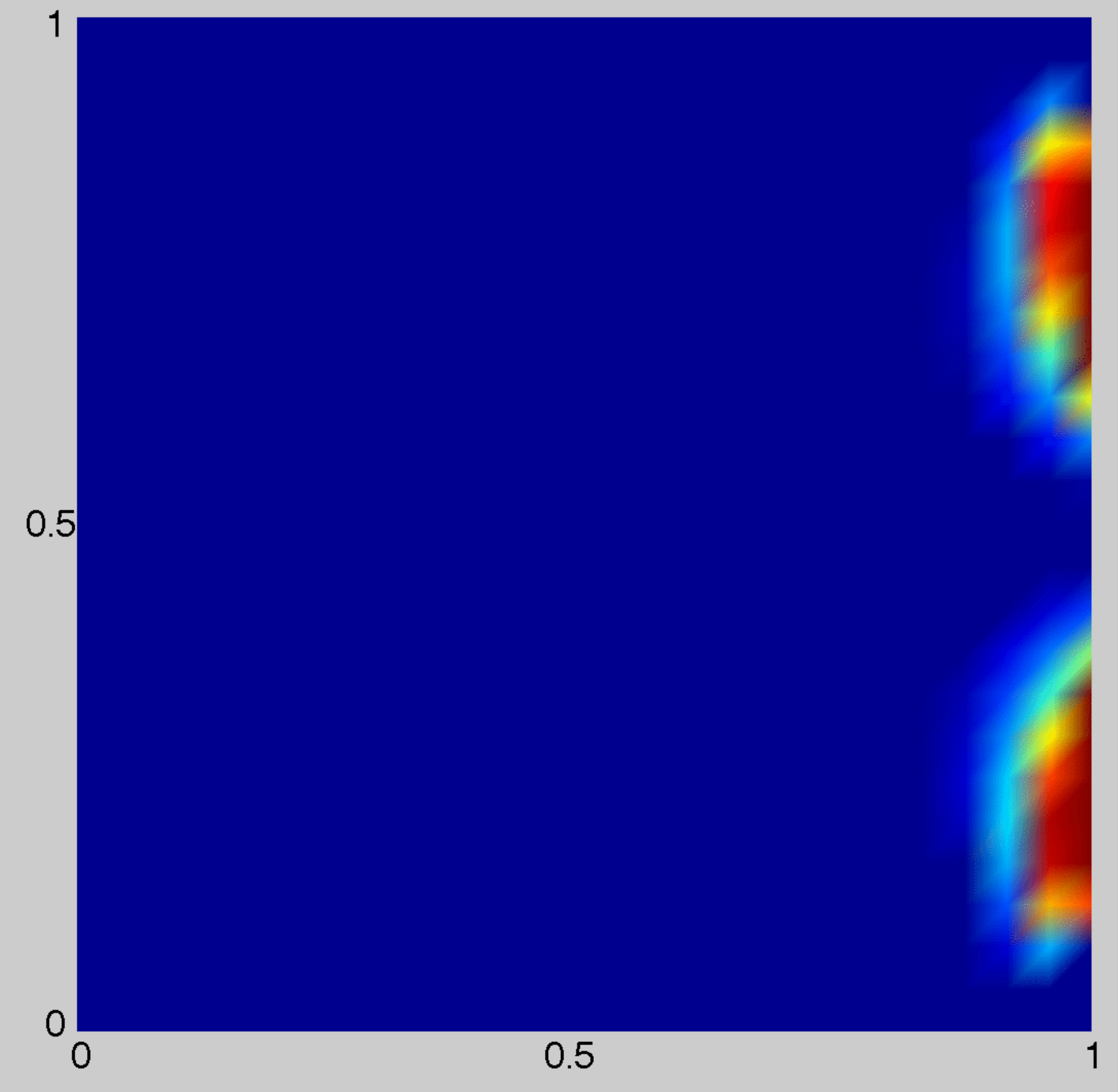}}\qquad
\subfloat[t=1]{\includegraphics[width=2.5in]{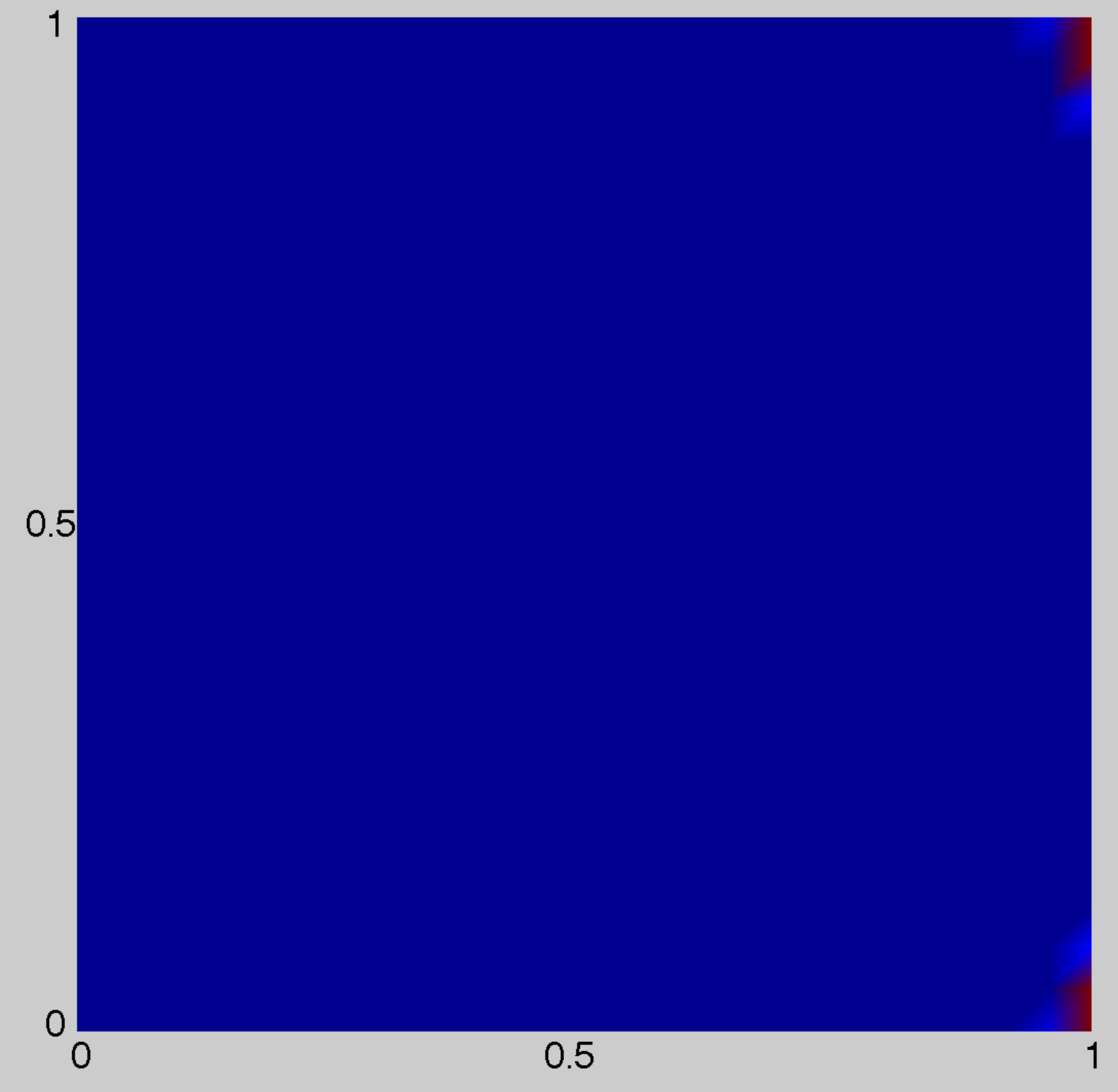}}\qquad
\caption{\footnotesize{ The optimal transport solution of unicycle model shown in the $x-y$ plane for  case 2. The grid size is $m=25^3$, and $k=20.$}}
\label{fig:OT_Un2}
\end{figure}

\section{Conclusions and Future Directions}\label{sec:Conclusions}
\PG{A set-oriented graph-based computational framework for continuous-time optimal transport over nonlinear dynamical systems has been developed. In the control systems setting, this framework generalizes the concept of optimal transport on graphs from that of a single integrator to control-affine nonlinear systems. This is accomplished by exploiting recent work on set-oriented infinitesimal generator approach for nonlinear dynamical systems. The controllability of measures over graphs is related to the connectivity of the `controlled' graph, and is proved to be a consequence of controllability of the underlying control system. This work connects set-oriented operator-theoretic methods in dynamical systems with optimal mass transportation theory, and opens up new directions in design of efficient feedback control strategies for nonlinear multi-agent and swarm systems operating in nonlinear ambient flow fields. }

Application of our set-oriented framework to larger domains, longer time-horizons and/or higher dimensional systems will require improvement in computational efficiency. Using efficient phase space discretization techniques, such as those employed in GAIO \cite{DeFrJu2001}, one can hope to improve the efficiency of the resulting optimal transport algorithms, and apply the framework to higher dimensional dynamical systems. Graph pruning algorithms can be employed to remove edges which are not likely to be used \cite{harabor2011online}.

Solutions to the optimal transport problem in the double-gyre system elucidate the role played by invariant manifolds, lobe-dynamics and almost-invariant sets in efficient transport of phase-space distributions. While it is known that invariant manifolds and lobes act as low-energy `channels' in the phase-space, our results give new qualitative and quantitative information about their role in problems of transport of distributions or swarms of agents. Application of this framework to more complicated arbitrary time-varying flows should provide similar insights into the role of Lagrangian coherent structures and coherent sets. This can lead to development of efficient swarm planning and control strategies for realistic applications in ocean and air-borne systems. Moreover, using our framework, the relative importance of such objects can be studied for different types of controls. 

Furthermore, recent methods in obtaining Lagrangian coherent structures and coherent sets in finite-time non-autonomous systems have used variational formulations of transport under nonlinear dynamics \cite{haller2015lagrangian} or  dynamic versions of the Laplacian \cite{froyland2015dynamic}. It would be fruitful to develop connections of these formulations with optimal mass transportation theory, extending the connections already identified in the Hamiltonian dynamics case \cite{bernard2004optimal}. For instance, one could define a controlled version of almost-invariant sets or coherent sets, by defining a control dynamic Laplacian, analogous to the control infinitesimal-generators as developed in the current work, or control Lyapunov measures developed in Ref. \cite{vaidya2008lyapunov}.

Connections with work in the closely related area of occupation measures \cite{lasserre2008nonlinear} and Lyapunov measures \cite{vaidya2008lyapunov} also need to be explored, especially in context of obtaining feedback control laws from the control laws obtained from optimal transport solutions. \KE{The feedback control laws constructed as solutions to the optimal transport problem guide the measure along shortest paths corresponding to solutions of the corresponding sub-Riemannian problem. Hence, it needs to be explored in what sense these laws can be used for feedback stabilization of an individual control-affine system. Moreover, the numerical approach in the current paper could also be adapted to construct time-independent feedback control laws for such systems.}

\clearpage

\section{Acknowledgements}
We thank Matthias Kawski, Uro\v{s} Kalabi\`{c} and Udit Halder for helpful discussions on geometric control theory. We also thank the reviewers for helpful suggestions.
\PGn{\appendix
\section{Proof of Theorem \ref{thm:strngcon}}
\begin{figure}[h]
\centering
\includegraphics[width=4in]{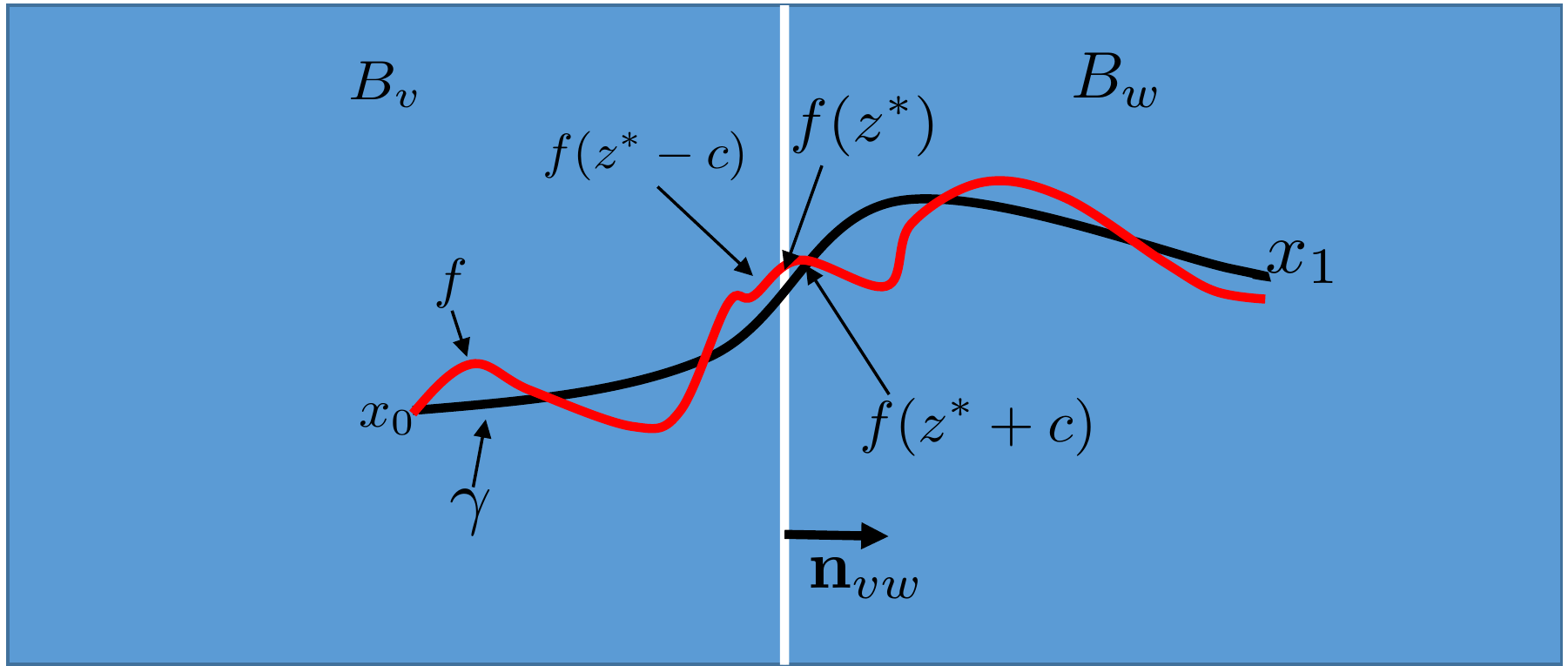}
\caption{\footnotesize{\PGn{Illustration of the proof of Theorem \ref{thm:strngcon}. The existence of an trajectory $f$ approximating a curve $\gamma$ connecting $B_v$ to $B_w$, obtained by piecewise constant control, is guaranteed by the small time local controllability. By continuity, this leads to non-zero transition rates, and hence strong connectivity of the control graph $\mathcal{G}_c$.}}}
\label{fig:con}
\end{figure}

\begin{proof}
Let $v,w \in \lbrace 1,2....m \rbrace$ be such that $v \neq w$ and $\bar{B_v} \cap \bar{B_w}$ has non-zero $(d-1)-$ dimensional (Hausdorff) measure. Consider points $x_0 \in int(B_v)$ and $x_1 \in int(B_w)$. Due to connectedness of M  there exists a continuous path $\gamma: [0,1] \rightarrow M$ such that $\gamma(0) = x_0$, $\gamma(1) = x_1$ and $\gamma(t) \in B_v \cup B_w$ $\forall t \in [0,1]$. From the Lie bracket condition of the vector fields, it follows that the system is small-time locally controllable at every $x \in int(M)$. Then, we can approximate the path $\gamma$ using a trajectory of the control system, using a sequence of piecewise-constant control inputs. 

To construct such a sequence, let us denote the flow map for time period $t$ under an autonomous vector field $X$ by $e^{tX}$. Then, for each $\epsilon >0$ there exists $k \in \mathbb{N}$ large enough, a sequence of time intervals $t_1, t_2......t_k $ satisfying $\sum_{i=1}^k t_i = 1$, constant control inputs $u^{1}, u^{2},.....u^{k} \in \mathbb{R} $, a set of indices $\eta_i \in \lbrace 1,2....n\rbrace$ selecting the corresponding control vector field $g_{\eta_i}$, and an approximating path,  $f:[0,1] \rightarrow M$ satisfying $\| \gamma(z)-f(z) \|^2_2 \leq \epsilon$ for all $z \in [0,1]$. The path $f(z)$ for $z \in [0,1]$ can be written using concatenation of flow under the action of chosen sequence of control vector fields :

$f(z=\sum_{i=1}^j t_i+\tau) =e^{\tau u^{j+1}g_{\eta_{j+1} } }  \circ ......\circ e^{t_j u^{j}g_{\eta_j}} \circ  e^{t_1 u^{1} g_{\eta_1}} x_0$, for each  $j \in \{ 0,1,...k\}$ and  $\tau \in [0,t_{j+1}]$. Here, the case $j=0$ means $f(\tau)  = e^{\tau u^1 g_{\eta_1}}{x_0}$ for all $\tau \in [0,t_1]$.

 Let $z^* \in (0,1)$ be such that  $f(z^*) \in \partial B_v$ and there exists $c \in (0,z^*)$ small enough such that $f(z^*-c) \in int (B_v) $ and $f(z^*+c) \in int(B_w)$. Then, clearly $\mathbf{n}_{vw} \cdot g_r(x) \neq 0 $ for some $ r \in  \lbrace 1,2....n\rbrace$ and some $x$ in an open neighborhood of $f(z^*)$ that is completely contained in $B_v \cup B_w$, assuming $\gamma$ and $\epsilon$ are chosen appropriately (i.e. avoiding crossings of $\gamma$ and $f$ over corners of $B_v$ and $B_w$). If not, $f(z^*+\delta) \in \partial B_i$ for all $\delta \in (0,c]$ since the non-existence of such a point $c$ with the desired property in the neighborhood of $f(z^*)$ implies one cannot use a concatenation of flows associated with the control vector fields to leave the set $\partial B_v$, which leads to a contradiction to the assumed property of small time local controllability. From continuity of the vector field $g_r$, there exists a small enough neighborhood, $N_x$ of $x$ such that $\mathbf{n}_{vw} \cdot g_r(y) \neq 0 $ for all $y \in N_x$. Hence, this implies $A^s_r(e) \neq 0$ for $e = v \rightarrow w$ for some $s \in \lbrace +,-\rbrace$. Due to continuity of the vector field $g_r$ at x, it also follows that $A^s_r(e) = A^s_r(\bar{e})$. Hence, the connectivity of the graph $\mathcal{G}_c$ follows. 
Case 2 just follows from the assumption that $span \bigg \lbrace g_i(x): i \in \lbrace 1,2....n \rbrace \bigg \rbrace = T_xM$ at each $x \in int(M)$.
\end{proof}}

\clearpage
\bibliographystyle{amsplain}
\bibliography{refs}

\end{document}